\documentclass{amsart}
\usepackage[all]{xy}
\usepackage{amssymb,latexsym, amscd, amsthm, amsmath, upgreek, bm}
\newtheorem{theorem}{Theorem}[section]
\newtheorem{lemma}[theorem]{Lemma}
\newtheorem{corollary}[theorem]{Corollary}
\newtheorem{proposition}[theorem]{Proposition}
\newtheorem{conjecture}[theorem]{Conjecture}

\theoremstyle{definition}
\newtheorem{definition}[theorem]{Definition}

\theoremstyle{remark}
\newtheorem{remark}[theorem]{\bf Remark}



\newcommand{\Q}{\mathbb Q}

\newcommand{\Z}{\mathbb Z}

\newcommand{\N}{\mathbb N}
\newcommand{\F}{\mathbb F}

\newcommand{\blambda}{{\boldsymbol{\lambda}}}

\begin{document}

\title[Stickelberger splitting in the $K$--theory of number fields]{{The Stickelberger splitting map and Euler systems in the $K$--theory of number fields}}

\author[G. Banaszak]{Grzegorz Banaszak*}
\address{Department of Mathematics and Computer Science, Adam Mickiewicz University,
Pozna\'{n} 61614, Poland}
\email{banaszak@amu.edu.pl}

\author[C. D. Popescu]{Cristian D. Popescu**}
\address{Department of Mathematics, University of California, San Diego, La Jolla, CA 92093, USA}
\email{cpopescu@math.ucsd.edu}

\subjclass[2000]{19D10, 11G30}
\date{}
\keywords{$K$-theory of number fields; Special Values of
$L$-functions; Euler Systems.}

\thanks{*Partially supported by
grant NN201607440 of the Polish Ministry of Science
and Education. \newline
\indent **Partially supported by NSF grants
DMS-901447 and DMS-0600905}

\begin{abstract}
For a CM abelian extension $F/K$ of an arbitrary totally real
number field $K$, we construct the Stickelberger splitting maps
(in the sense of \cite{Ba1}) for both the \'etale and the Quillen
$K$--theory of $F$ and we use these maps to construct Euler
systems in the even Quillen $K$--theory of $F$. The Stickelberger
splitting maps give an immediate proof of the annihilation of the
groups of divisible elements  $div K_{2n}(F)_l$ of the even
$K$--theory of the top field by higher Stickelberger elements, for
all odd primes $l$. This generalizes the results of \cite{Ba1},
which only deals with CM abelian extensions of $\Bbb Q$. The
techniques involved in constructing our Euler systems at this
level of generality are quite different from those used in
\cite{BG1}, where an Euler system in the odd $K$--theory with
finite coefficients of abelian CM extensions of $\Bbb Q$ was
given. We work under the assumption that the Iwasawa
$\mu$--invariant conjecture holds. This permits us to make use of
the recent results of Greither-Popescu \cite{GP} on the \'etale
Coates-Sinnott conjecture for arbitrary abelian extensions of
totally real number fields, which are conditional upon this
assumption. In upcoming work, we will use the Euler systems
constructed in this paper to obtain information on the groups of
divisible elements $div K_{2n}(F)_l$, for all $n>0$ and odd $l$.
It is known that the structure of these groups is intimately
related to some of the deepest unsolved problems in algebraic
number theory, e.g. the Kummer-Vandiver and Iwasawa conjectures on
class groups of cyclotomic fields. We make these connections
explicit in the introduction.
\end{abstract}

\maketitle


\section{Introduction}

 Let $F / K$ be an abelian CM extension of a totally real
number field $K.$ Let ${\bf f}$ be the conductor of $F / K$ and
let $K_{{\bf f}} / K$ be the ray--class field extension with
conductor ${\bf f}.$ Let $G_{\bf f} := G(K_{{\bf f}} / K).$ For
all $n\in\Bbb Z_{\geq 0}$, Coates \cite{C} defined higher
Stickelberger elements $\Theta_{n} ({\bf b}, {\bf f}) \in \Q [G(F
/ K)]$, for integral ideals $\bf b$ of $K$ coprime to $\bf f$.
Deligne and Ribet \cite{DR} proved that $\Theta_{n} ({\bf b}, {\bf
f}) \in \Z[G(F / K)]$, if ${\bf b}$ is also coprime to
$w_{n+1}(F):={\rm card}\, H^0(F, \Bbb Q/\Bbb Z(n+1))$.  A detailed
discussion of the Stickelberger elements and
their basic properties is given in \S2 below. In 1974, Coates and Sinnott \cite{CS}
formulated the following conjecture.

\begin{conjecture}[Coates-Sinnott]\label{Conj. Coates-Sinnott}
For all $n\geq 1$ and all ${\bf b}$ coprime to $w_{n+1}(F)$, $\Theta_{n} ({\bf b}, {\bf f})$
annihilates $K_{2n} ({\mathcal O}_F)$.
\end{conjecture}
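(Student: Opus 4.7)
The plan is to prove the annihilation prime-by-prime using the \'etale realization of algebraic $K$--theory. First, by Deligne--Ribet, whenever $\mathbf{b}$ is coprime to $w_{n+1}(F)$ the element $\Theta_n(\mathbf{b},\mathbf{f})$ already lies in $\Z[G(F/K)]$, so the assertion is an integral annihilator statement for a $\Z[G(F/K)]$--module. Since $K_{2n}(\mathcal O_F)$ is finite for $n\geq 1$ by Borel's rank theorem, it suffices, after fixing a prime $l$, to show that $\Theta_n(\mathbf{b},\mathbf{f})$ kills the $l$--primary component $K_{2n}(\mathcal O_F)\otimes \Z_l$.

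For an odd prime $l$, I would combine the Soul\'e/Dwyer--Friedlander Chern character with the Quillen--Lichtenbaum conjecture (a theorem, via Voevodsky--Rost--Weibel) to obtain a $G(F/K)$--equivariant isomorphism
\[
K_{2n}(\mathcal O_F)\otimes \Z_l \;\cong\; H^2_{\mathrm{et}}\bigl(\mathcal O_F[1/l], \Z_l(n+1)\bigr).
\]
The problem then reduces to the \'etale Coates--Sinnott statement in degree two, namely that $\Theta_n(\mathbf{b},\mathbf{f})$ annihilates the right-hand side. This is precisely what Greither--Popescu \cite{GP} establish for an arbitrary CM abelian extension of a totally real base $K$, \emph{conditional on the vanishing of the relevant Iwasawa $\mu$--invariant}---which is exactly the standing hypothesis of the present paper. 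Combining these two inputs settles the odd-$l$ half of the conjecture.

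The case $l=2$ is considerably more delicate owing to real places, complex conjugation and $2$-adic trivial zeros of $L$--functions, so one would either transport the Greither--Popescu argument into the $2$-adic setting or extract annihilation from an equivariant Tamagawa number formalism. This, together with the dependence on $\mu=0$, is the \emph{main obstacle}: every currently available strategy is conditional on $\mu=0$ or on an open equivariant main conjecture, and at $l=2$ the \'etale-to-$K$-theoretic translation loses its cleanest form. The Stickelberger splitting machinery developed in this paper offers a more concrete alternative, realizing $\Theta_n(\mathbf{b},\mathbf{f})$ as the composite of an explicit section with a boundary map in the localization sequence for $K$--theory in the spirit of \cite{Ba1}; the technical heart of carrying that programme out over a general totally real $K$ is controlling the integrality and ideal-theoretic behaviour of the higher Stickelberger elements, which Deligne--Ribet ultimately supplies.
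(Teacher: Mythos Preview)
The statement you are addressing is labeled in the paper as a \emph{conjecture} (Conjecture~\ref{Conj. Coates-Sinnott}), and the paper does not provide a proof of it. There is therefore no ``paper's own proof'' to compare against. What the paper does is: (i) record the known partial results (Coates--Sinnott for $K=\Q$ and $n=1$; Greither--Popescu, Theorem~\ref{Greither-Popescu}, for the \'etale analogue under $\mu_{F,l}=0$); (ii) observe that a proof of the Quillen--Lichtenbaum conjecture would transport the \'etale results to Quillen $K$--theory; and (iii) prove the strictly weaker statement that $\Theta_n(\mathbf b,\mathbf f)$ annihilates the subgroup $div\,K_{2n}(F)_l\subseteq K_{2n}(\mathcal O_F)_l$ of divisible elements (Theorem~\ref{Theorem 7.18}), again conditional on $\mu_{F,l}=0$.

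Your proposal is essentially a restatement of (i)$+$(ii): pass to \'etale cohomology via Quillen--Lichtenbaum and invoke Greither--Popescu. This is precisely the route the introduction sketches, but it does not yield a proof of the conjecture as stated, for three reasons you partly acknowledge yourself. First, the conjecture is stated with no hypothesis on $\mu$; the Greither--Popescu input requires $\mu_{F,l}=0$, so you obtain at best a conditional result. Your claim that $\mu=0$ is ``the standing hypothesis of the present paper'' applies to the paper's constructions in \S4--5, not to the statement of Conjecture~\ref{Conj. Coates-Sinnott}, which is formulated unconditionally. Second, within the paper Quillen--Lichtenbaum is explicitly treated as open (Conjecture~\ref{Conj. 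Quillen-Lichtenbaum}); invoking it as a theorem imports an input external to the paper's framework. Third, your own discussion leaves $l=2$ unresolved. In short, neither the paper nor your proposal proves the conjecture; you have accurately described the conditional state of the art, but that is a survey of obstacles rather than a proof.
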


\noindent This should be viewed as a higher analogue of the classical conjecture of Brumer.

\begin{conjecture}[Brumer]\label{Conj. Brumer} For all ${\bf b}$ coprime to $w_1(F)$, $\Theta_{0} ({\bf b}, {\bf f})$
annihilates $K_{0} ({\mathcal O}_F)_{\rm tors}=Cl(O_F).$
\end{conjecture}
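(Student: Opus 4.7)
The plan is to approach the conjecture via Iwasawa theory, which is how all presently known cases have been handled. Since $K_{0}({\mathcal O}_F)_{\rm tors} = Cl({\mathcal O}_F)$, the statement amounts to the classical Brumer conjecture, and one may work one prime $l$ at a time by decomposing $Cl({\mathcal O}_F)$ into its $l$-primary components.

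For each odd prime $l$, the first step is to pass to the cyclotomic $\Z_l$-tower $F_\infty/F$ and form the Iwasawa module $X_\infty = \varprojlim_n Cl({\mathcal O}_{F_n})\{l\}$, which is finitely generated over the Iwasawa algebra $\Lambda = \Z_l[[G(F_\infty/K)]]$. The next step is to invoke Wiles's proof of the main conjecture for totally real fields, which identifies the characteristic ideal of the minus part $X_\infty^-$ with the ideal generated by an $l$-adic $L$-function $\mathcal{L}_l$ interpolating values of partial $L$-series; under the $\mu = 0$ hypothesis built into the paper's setting, specializing $\mathcal{L}_l$ at the bottom of the tower recovers $\Theta_{0}({\bf b}, {\bf f})$, the smoothing factor indexed by ${\bf b}$ coprime to $w_1(F)$ being precisely what neutralizes trivial zeros and guarantees integrality (as in Deligne--Ribet). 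One then descends along $\Gamma = G(F_\infty/F)$ to obtain annihilation of the $\Gamma$-coinvariants of $X_\infty^-$, which via the standard control theorem maps onto $Cl({\mathcal O}_F)\{l\}^-$.

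The main obstacle lies in this descent step: $X_\infty^-$ need not be a cyclic $\Lambda$-module, so the main conjecture yields annihilation only after upgrading characteristic ideals to Fitting ideals of the coinvariants. This refinement is exactly what an Euler-system machinery of the kind constructed in the body of the paper is designed to provide, since annihilation by the Stickelberger element on the nose is strictly stronger than the characteristic-ideal identification. A secondary obstacle is the prime $l = 2$, implicitly excluded because the decomposition into plus and minus parts under complex conjugation requires inverting $2$; treating the $2$-part would need an essentially different argument and is outside the scope of the techniques advertised in the abstract. A final subtlety is the plus part $Cl({\mathcal O}_F)\{l\}^+$, on which $\Theta_{0}({\bf b}, {\bf f})$ acts trivially up to the smoothing factor, so the conjecture there reduces to verifying that the factor $({\bf Nb} - \sigma_{\bf b})$ kills the plus part, which is handled by a direct computation with the Euler factors.
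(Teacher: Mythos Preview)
The statement you are attempting to prove is labeled in the paper as a \emph{Conjecture}, not a theorem; the paper offers no proof of it and explicitly remarks that Brumer's conjecture ``was not yet proved over arbitrary totally real base fields (and it is still not proved unconditionally at that level of generality).'' What the paper does cite is the Greither--Popescu result (Theorem~\ref{Greither-Popescu}(1)), which is conditional on $\mu_{F,l}=0$, restricted to odd~$l$, and only shows that $\prod_{{\bf l}}(1-({\bf l},F/K)^{-1}N{\bf l})\cdot\Theta_0({\bf b},{\bf f})$ annihilates $Cl(O_F)_l$, i.e.\ the Stickelberger element modified by extra Euler factors at primes above~$l$. Your sketch via Wiles's main conjecture and descent along the cyclotomic tower is precisely the Iwasawa-theoretic route underlying that partial result, and the obstacles you yourself flag (Fitting versus characteristic ideals in the descent, the prime~$2$) are exactly why the full conjecture remains open. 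In short, there is no proof in the paper to compare your proposal against, and your proposal is not a proof either but an outline of the known partial approach together with its known gaps.

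One technical correction on your treatment of the plus part. For odd~$l$ the annihilation of $Cl(O_F)_l^{+}$ by $\Theta_0({\bf b},{\bf f})$ follows from the behaviour of the unsmoothed element $\theta_0=\sum_{\bf a}\zeta_{\bf f}({\bf a},0)({\bf a},F)^{-1}$ under complex conjugation: since $L(\chi,0)=0$ for every nontrivial even character~$\chi$ of $G(F/K)$, the image of $\theta_0$ in the plus part of $\Q[G(F/K)]$ is a rational multiple of the norm idempotent, and the norm element already kills the class group. It is \emph{not} the case that one must verify separately that $N{\bf b}-\sigma_{\bf b}$ annihilates $Cl(O_F)^{+}$; that assertion is false in general.
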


\noindent Coates and Sinnott \cite{CS} proved that for the base field $K =
\Q$ the element $\Theta_{1} ({\bf b}, {\bf f})$ annihilates $K_{2}
({\mathcal O}_F)$ for $F/\Q$ abelian and ${\bf b}$ coprime to the
order of $K_{2} ({\mathcal O}_F).$ Moreover, in the case $K=\Bbb Q$, they proved that
$\Theta_{n} ({\bf b}, {\bf f})$ annihilates the $l$--adic \'etale cohomology groups $H^2(\mathcal O_F[1/l], \Bbb Z_l(n+1))\simeq K_{2n}^{et}
({\mathcal O}_F[1/l])$ for any odd prime $l$, and any odd $n\geq 1$.  One of the ingredients used in the proof is the fact
that Brumer's conjecture holds true if $K=\Bbb Q$. This is the classical theorem of Stickelberger.
The passage from annihilation of
\'etale cohomology to that of $K$--theory in the case $n=1$ was possible due to the following theorem (see \cite{Ta2}, \cite{Co1} and \cite{Co2}.)

\begin{theorem}[Tate]\label{Tate's Theorem}  The $l$--adic Chern map gives a canonical isomorphism
$${K_{2} ({\mathcal O}_L) \otimes \Z_l
\stackrel{\cong}{\longrightarrow}
 K_{2}^{et} ({\mathcal O}_L[1/l])},
$$
for any number field $L$ and any odd prime $l$.
\end{theorem}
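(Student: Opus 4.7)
The plan is to establish the claim modulo $l^n$ for each $n\geq 1$ and then pass to the inverse limit. First, by Garland's theorem the group $K_2(\mathcal O_L)$ is a finite abelian group, so $K_2(\mathcal O_L)\otimes\Z_l$ coincides with its $l$--primary component, i.e. with $\varprojlim_n K_2(\mathcal O_L)/l^n$, while $K_2^{et}(\mathcal O_L[1/l])=H^2_{et}(\mathcal O_L[1/l],\Z_l(2))=\varprojlim_n H^2_{et}(\mathcal O_L[1/l],\mu_{l^n}^{\otimes 2})$ by definition. Since the cokernel $\bigoplus_{v\mid l}k(v)^\ast$ of the localization map from $\mathcal O_L$ to $\mathcal O_L[1/l]$ has order prime to $l$, one has $K_2(\mathcal O_L)/l^n\cong K_2(\mathcal O_L[1/l])/l^n$, so it is enough to show that the mod-$l^n$ Chern class map
\[
c_{l^n}\colon K_2(\mathcal O_L[1/l])/l^n \longrightarrow H^2_{et}(\mathcal O_L[1/l],\mu_{l^n}^{\otimes 2})
\]
is an isomorphism for every $n$.

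Next, I would place $c_{l^n}$ into a commutative ladder formed by two compatible localization sequences. Quillen's localization gives
\[
0 \to K_2(\mathcal O_L[1/l]) \to K_2(L) \xrightarrow{\,\bigoplus\partial_v\,} \bigoplus_{v\nmid l} k(v)^\ast \to 0,
\]
and the Gysin/localization sequence in \'etale cohomology with coefficients $\mu_{l^n}^{\otimes 2}$ produces
\[
0 \to H^2_{et}(\mathcal O_L[1/l],\mu_{l^n}^{\otimes 2}) \to H^2_{et}(L,\mu_{l^n}^{\otimes 2}) \to \bigoplus_{v\nmid l} H^1_{et}(k(v),\mu_{l^n}) \to 0
\]
(the last zero using $\mathrm{cd}_l(k(v))=1$). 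Reducing the first sequence modulo $l^n$ and connecting the two by Chern/Galois--symbol maps yields a diagram to which the five lemma applies once the flanking vertical maps are known to be isomorphisms.

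The third step is to identify those flanking maps. At the generic point, the Merkurjev--Suslin theorem provides the isomorphism $K_2(L)/l^n \stackrel{\sim}{\longrightarrow} H^2_{et}(L,\mu_{l^n}^{\otimes 2})$ via the Galois symbol, which agrees with the Chern map up to a standard normalization. At each residue field $k(v)$ with $v\nmid l$, Kummer theory on the finite field $k(v)$ gives $k(v)^\ast/l^n\cong H^1_{et}(k(v),\mu_{l^n})$, and a direct computation on elementary Steinberg symbols $\{u,\pi_v\}$ identifies the tame symbol $\partial_v$ with the \'etale residue (cupped with the class of a uniformizer in $H^1_{et}(L_v,\mu_{l^n})$). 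Applying the five lemma then yields the isomorphism in the middle column.

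The main obstacle is the invocation of Merkurjev--Suslin, which I would take as a black box; the remaining ingredients---Quillen's localization sequence, its \'etale counterpart, Kummer theory at finite residue fields, finiteness of $K_2(\mathcal O_L)$, and the finite cohomological $l$--dimension of residue fields---are standard. The one compatibility that must be verified with care is that the tame symbol in $K$--theory matches the cohomological residue under the Chern (Galois-symbol) map; this is a well-known computation on symbols, but it is the technical heart of the ladder argument.
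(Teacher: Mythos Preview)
The paper does not give a proof of this statement at all: it is quoted in the introduction as a theorem of Tate, with a reference to \cite{Ta2} (and to \cite{Co1}, \cite{Co2}), and is used throughout as a black box. So there is no ``paper's own proof'' to compare against; the relevant comparison is with Tate's original argument.

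Your plan is a reasonable modern route to the statement, but it is genuinely different from what Tate did. Tate's 1976 proof predates the Merkurjev--Suslin theorem by several years; he could not and did not reduce to the norm--residue isomorphism at the generic point. Instead, his argument proceeds via class field theory: he identifies $K_2(L)/l^n$ with a Galois cohomology group by constructing the Galois symbol directly and proving bijectivity using local and global duality, the structure of the tame and wild kernels, and the known local computation of $K_2$ of local fields (Moore's theorem and Tate's own local results). In effect, Tate proves the case of Merkurjev--Suslin that he needs for global fields by arithmetic means, rather than invoking it. Your approach trades those arithmetic inputs for the single heavy hammer of Merkurjev--Suslin, which makes the diagram-chase cleaner but hides the arithmetic content.

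Two technical points in your outline deserve care. First, when you reduce the Quillen localization sequence modulo $l^n$, the resulting sequence is only right-exact in general; you pick up a $\mathrm{Tor}$ term $\bigoplus_{v\nmid l} k(v)^\ast[l^n]$ on the left. The same term appears on the \'etale side, and the five-lemma argument still goes through, but you should say so explicitly rather than asserting a short exact sequence after reduction. Second, the identification of the Galois symbol with the $l$-adic Chern class map (up to the standard sign/factorial normalization) is exactly what is needed to make the ladder commute, and is not entirely formal; Soul\'e's comparison is the usual reference here.
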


\noindent The following deep conjecture aims at generalizing Tate's theorem.

\begin{conjecture}[Quillen-Lichtenbaum]\label{Conj. Quillen-Lichtenbaum}
For any number field $L$, any $m \geq 1$ and any odd prime $l$ there is a natural
$l$--adic Chern map isomorphism
\begin{eqnarray}{K_{m} ({\mathcal O}_L) \otimes \Z_l
\stackrel{\cong}{\longrightarrow}
 K_{m}^{et} ({\mathcal O}_L[1/l])}
\end{eqnarray}
\end{conjecture}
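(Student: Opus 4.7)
The plan is to reduce the conjecture to the Bloch--Kato (norm residue isomorphism) conjecture via the theory of motivic cohomology. I would factor the Chern map through motivic cohomology on both sides: on the algebraic side, the Friedlander--Suslin and Bloch--Lichtenbaum motivic spectral sequence converges from $H^{p}_M(\mathcal O_L, \Z(q))$ to $K_{2q-p}(\mathcal O_L)$; on the étale side, a Dwyer--Friedlander/Thomason style spectral sequence converges from the continuous étale cohomology $H^{p}_{et}(\mathcal O_L[1/l], \Z_l(q))$ to $K^{et}_{2q-p}(\mathcal O_L[1/l])$. The $l$--adic Chern map is induced by the canonical comparison map between motivic and étale motivic cohomology on the $E_2$--pages.

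First I would establish the formal structure: for the Dedekind domain $\mathcal O_L[1/l]$, which has $l$--cohomological dimension $2$, the étale spectral sequence collapses at $E_2$; on the motivic side, standard vanishing of $H^p_M(\mathcal O_L[1/l], \Z(q))$ for $p>q+1$ confines the nonzero entries to a narrow strip. After passing to finite coefficients and taking the inverse limit in $\nu$, it suffices to show that the natural comparison map
\[
H^p_M(\mathcal O_L[1/l], \Z/l^\nu(q)) \longrightarrow H^p_{et}(\mathcal O_L[1/l], \mu_{l^\nu}^{\otimes q})
\]
is an isomorphism for $p\leq q$ and an injection for $p=q+1$.

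Second, this range statement is the Beilinson--Lichtenbaum conjecture, and the Suslin--Voevodsky rigidity/comparison theorem establishes its equivalence with the norm residue isomorphism theorem of Bloch--Kato: for every field $F$ of characteristic $\neq l$, the Galois symbol $K^M_q(F)/l^\nu \to H^q_{et}(F, \mu_{l^\nu}^{\otimes q})$ is an isomorphism. Once this is known on fields, it propagates to smooth varieties and to $\mathcal O_L[1/l]$ via Gersten-type resolutions and the localization sequence relating $\mathcal O_L[1/l]$ to its residue fields and to $L$.

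The main obstacle, and by far the deepest input, is the Bloch--Kato statement itself. The approach, due to Voevodsky and Rost, proceeds by induction on the weight $q$: given a nontrivial symbol in $K^M_q(F)/l$, one produces a Rost norm variety $X$ over $F$ whose function field splits the symbol while keeping tight cohomological control of $X$; motivic Steenrod operations and a careful analysis of the motive of $X$ are then used to upgrade this splitting into surjectivity and injectivity of the symbol map. The geometric construction of the norm varieties, built inductively out of Pfister-style quadrics and generalized Rost motives, is the technical heart of the program, and it is there that I expect the essential difficulty to lie.
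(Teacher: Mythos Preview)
The statement you are attempting to prove is labeled a \emph{conjecture} in the paper, and the paper does not contain a proof of it; the authors explicitly write that ``it is hoped that recent work of Suslin, Voyevodsky, Rost, Friedlander, Morel, Levine, Weibel and others will lead to a proof of the Quillen--Lichtenbaum conjecture.'' So there is no proof in the paper to compare your proposal against.

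That said, your outline is essentially the now-standard route by which the Quillen--Lichtenbaum conjecture was eventually established: one reduces it to the Beilinson--Lichtenbaum comparison between motivic and \'etale cohomology via the motivic-to-$K$-theory spectral sequence, and then invokes the Voevodsky--Rost proof of the Bloch--Kato (norm residue) conjecture. Your identification of the deepest input --- the construction of Rost norm varieties and the use of motivic Steenrod operations --- is accurate. What you have written is a reasonable high-level sketch of that program, but be aware that the paper treats the statement as a black-box conjecture rather than something it proves or even sketches a proof of; your proposal goes well beyond anything the paper attempts.
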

\medskip

Very recently, Greither and the second author used Iwasawa theoretic techniques to prove the following results for a general abelian CM extension
$F/K$ of an arbitrary totally real field $K$ (see \cite{GP}.)

\begin{theorem}[Greither-Popescu]\label{Greither-Popescu} Assume that $l$ is odd and the Iwasawa $\mu$--invariant $\mu_{F,l}$ associated to
$F$ and $l$ vanishes. Then, we have the following.
\begin{enumerate}
\item $\prod_{\bf l}(1-({\bf l}, F/K)^{-1}\cdot N{\bf l})\cdot \Theta_0({\bf b}, {\bf f})$ annihilates $Cl(O_F)_l$, for all ${\bf b}$ coprime to $w_1(K)l$, where the product is taken
over primes ${\bf l}$ of $K$ which divide $l$ and are coprime to ${\bf f}$.
\item $\Theta_n({\bf b}, {\bf f})$ annihilates $K_{2n}^{et}(\mathcal O_F[1/l])$, for all $n\geq 1$ and all ${\bf b}$ coprime to $w_{n+1}(F)l$.
\end{enumerate}

\end{theorem}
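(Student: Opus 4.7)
The plan is to prove both parts by Iwasawa descent from the equivariant main conjecture (EMC) of Iwasawa theory, which is known unconditionally under the hypothesis $\mu_{F,l}=0$ thanks to work of Wiles, Ritter--Weiss, and Kakde. Let $F_\infty$ denote the compositum of $F$ with the cyclotomic $\Z_l$--extension, set $\mathcal G := G(F_\infty/K)$, and write $\Lambda := \Z_l[[\mathcal G]]$ for the equivariant Iwasawa algebra. Fix a finite set $S$ of primes of $K$ containing the archimedean primes together with those dividing $l{\bf f}$, and let $X_\infty^S$ denote the Galois group over $F_\infty$ of the maximal abelian pro--$l$ extension unramified outside $S$. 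The EMC identifies the Fitting ideal over $\Lambda$ of a suitable Pontryagin dual of the minus part of $X_\infty^S$ with the ideal generated by an equivariant $l$--adic $L$--function $\mathcal L_S$, whose finite-level specializations recover the Stickelberger elements $\Theta_n({\bf b},{\bf f})$ via the Deligne--Ribet interpolation formula.

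For part (1), I would take coinvariants under $\Gamma := \mathrm{Gal}(F_\infty/F)$. A standard genus-theoretic exact sequence relates $(X_\infty^S)_\Gamma$ to $Cl(O_F)_l$ up to contributions coming from the decomposition groups of the primes in $S$. The Euler factors $\prod_{\bf l}(1-({\bf l},F/K)^{-1} N{\bf l})$ appear precisely to convert the imprimitive Stickelberger element $\Theta_0({\bf b},{\bf f})$, which is only missing Euler factors at primes dividing $\bf f$, into the truly $S$--imprimitive $L$--value controlled by $\mathcal L_S$.

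For part (2), I would apply a Tate twist by $n$. Jannsen's descent spectral sequence identifies $H^2(\mathcal O_F[1/l], \Z_l(n+1)) \simeq K_{2n}^{et}(\mathcal O_F[1/l])$ with the coinvariants $(X_\infty^S(n))_\Gamma$, up to correction terms that vanish for $n\geq 1$: the module $H^0(F,\Q_l/\Z_l(n+1))$ is finite (of order $w_{n+1}(F)_l$), and the trivial-zero phenomenon at primes above $l$ disappears after twisting by a nonzero power of the cyclotomic character. Consequently the Euler-factor correction present in part (1) is absent here, and $\Theta_n({\bf b},{\bf f})$ (with ${\bf b}$ chosen to kill the $w_{n+1}(F)l$-torsion obstruction) annihilates $K_{2n}^{et}(\mathcal O_F[1/l])$ directly.

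The main obstacle will be the conversion of the Fitting--ideal statement at the Iwasawa level into an honest annihilation statement at finite level, since Fitting ideals behave well under quotients but not under the kind of limits and duals that appear in descent. The strategy is to work systematically with the Pontryagin dual and exploit the principle that the Fitting ideal of the dual annihilates the original module (using the Gorenstein property of $\Lambda$), combined with a careful bookkeeping of pseudo-null submodules and of the Euler factors that are gained or lost when passing from the $l$--adic $L$--function $\mathcal L_S$ to the finite-level Stickelberger element $\Theta_n({\bf b},{\bf f})$.
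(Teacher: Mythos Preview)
This theorem is not proved in the present paper at all: it is quoted from \cite{GP} as an input to the constructions in \S4 (see the sentence preceding the theorem and the remark immediately after it pointing to Theorems 6.5 and 6.11 of \cite{GP}). So there is no ``paper's own proof'' to compare your proposal against; the paper only records that the argument in \cite{GP} uses Iwasawa-theoretic techniques and in fact yields stronger Fitting-ideal statements.

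That said, your sketch is broadly in line with what \cite{GP} actually does: the engine is an equivariant main conjecture at the $\Lambda$-level (proved in \cite{GP} itself under $\mu_{F,l}=0$, building on Wiles; the Ritter--Weiss and Kakde results you cite are parallel developments rather than inputs), followed by descent and Tate-twisting to reach $H^2(\mathcal O_F[1/l],\Z_l(n+1))\cong K_{2n}^{et}(\mathcal O_F[1/l])$. Your identification of the Euler-factor bookkeeping in part (1) and its disappearance after a nontrivial twist in part (2) is correct. The one place where your outline is genuinely hand-wavy is the passage from a Fitting-ideal statement for a dual module over $\Lambda$ to an annihilation statement at finite level: in \cite{GP} this is handled by first determining the Fitting ideal of an explicit Ritter--Weiss type module (whose projective dimension over $\Lambda$ is at most $1$, so that Fitting ideals are principal and behave well under base change), rather than by a generic ``Gorenstein duality'' argument. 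If you want to turn your sketch into a proof, that is the step where the real work lies.
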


\noindent In fact, stronger results are proved in \cite{GP}, involving Fitting ideals rather than annihilators
and, in the case $n=0$,  a refinement of Brumer's conjecture, known as the Brumer-Stark conjecture (see Theorems 6.5 and 6.11 in loc.cit.)
\medskip

\noindent Results similar to the Fitting ideal version of part (2) of Theorem \ref{Greither-Popescu} were also obtained with different methods by Burns--Greither in \cite{Burns-Greither} and
by Nguyen Quang Do in \cite{NQD}, under some extra hypotheses.
\medskip

\noindent Note that a well known conjecture of Iwasawa states that $\mu_{F,l}=0$, for all $l$ and $F$ as above. This conjecture is known to
hold if $F$ is an abelian extension of $\Bbb Q$, due independently to
Ferrero-Washington and Sinnott. Consequently, if the Quillen-Lichtenbaum conjecture is proved,
then, for all odd primes $l$, the $l$--primary
part of the Coates-Sinnott conjecture is established unconditionally for all abelian extensions $F/\Q$ and for general extensions $F/K$, under the assumption that
$\mu_{F,l}=0$.
It is hoped that
recent work of Suslin, Voyevodsky, Rost,
Friedlander, Morel, Levine, Weibel and others will lead to a proof of
the Quillen-Lichtenbaum conjecture.
\bigskip

In 1992, a different approach towards the Coates-Sinnott conjecture was
used in \cite{Ba1}, in the case $K = \Q.$ Namely, for all $n\geq 1$, all ${\bf b}$ coprime to $w_{n+1}(F)$,
and $l>2$, the first author constructed in Ch. IV of loc.cit. the Stickelberger
splitting map $\Lambda:=\Lambda_n$ of the boundary map $\partial_F$ in the
Quillen localization sequence
$$ 0 \stackrel{}{\longrightarrow} K_{2n} ({\mathcal O}_{F})_l
\stackrel{}{\longrightarrow} K_{2n} (F)_l
{{\stackrel{\partial_F}{\longrightarrow}} \atop
{{\stackrel{\Lambda}{\longleftarrow}}}} \bigoplus_{v} K_{2n-1}
(k_v)_l \stackrel{}{\longrightarrow} 0.$$ By definition, $\Lambda$ is a homomorphism such that $\partial_F
\circ \Lambda$ is the multiplication by $\Theta_{n} ({\bf b},
{\bf f}).$ Above, $k_v$ denotes the residue field of a prime $v$ in $O_F$.

The existence of such a map $\Lambda$ implies that $\Theta_{n} ({\bf b}, {\bf
f})$ annihilates the group $div (K_{2n} (F)_l)$ of divisible elements
in $K_{2n}(F)_l$ (see loc.cit. as well as Theorem \ref{Theorem 7.13} below.)  This group is contained
in $K_{2n} ({\mathcal O}_F)_l$, which is obvious from the exact sequence above and
the finiteness of  $K_{2n-1}
(k_v)_l$, for all $v$.

The construction of $\Lambda$ in loc.cit. was
done without appealing to \'etale cohomology and the
Quillen-Lichtenbaum conjecture. However, it was based on the fact that Brumer's Conjecture is known to
hold for abelian extensions of $\Q$ (Stickelberger's
theorem). Since Brumer's conjecture was not yet proved
over arbitrary totally real base fields (and it is still not proved unconditionally at that level of generality), the construction of
$\Lambda$ in loc.cit. could not be generalized. Also, it should be mentioned that in loc.cit.
various technical difficulties arose at primes $l|n$ and the map $\Lambda$ was constructed only up to a certain power
$l^{v_l(n)}$ in those cases.

In 1996, in joint work with Gajda \cite{BG1}, the first author discovered a new, perhaps
deeper and farther reaching application of the existence of $\Lambda$ for abelian extensions $F/\Bbb Q$.
Namely, $\Lambda$ was used in \cite{BG1} to construct special elements which give rise to Euler systems in the
$K$--theory with finite coefficients  $\{K_{2n+1}(L, \Bbb Z/l^k)\}_L$, where $L$ runs over all abelian extensions
of $\Bbb Q$, such that $F\subseteq L$ and $L/F$ has a square-free conductor coprime to ${\bf f}l$. Now, it is hoped that
these Euler systems can be used to study the structure of the group of
divisible elements $div K_{2n}(F)_l$, for all $n\geq 1$. This is a goal truly worth pursuing, as this group structure
is linked to some of the deepest unsolved problems in algebraic number theory, as shown at the end of this introduction.
\bigskip

The main goal of this paper is to generalize the results obtained in \cite{Ba1} and \cite{BG1} to the case of
CM abelian extensions $F/K$ of arbitrary totally real number fields $K$. Moreover, in terms of constructing Euler systems,
we go far beyond \cite{BG1} in that we construct Euler systems in Quillen $K$--theory rather than
$K$--theory with finite coefficients only.
Roughly speaking, our strategy is as follows.

{\bf Step 1.} We fix an integer $m>0$ and assume that the $m$--th
Stickelberger elements $\Theta_{m} ({\bf b}, {\bf f}_k)$
annihilate $K_{2m} ({\mathcal O}_{F_k})_l$ (respectively
$K^{et}_{2m} ({\mathcal O}_{F_k})_l$) for each $k$, where $F_k :=
F(\mu_{l^k})$ and ${\bf f}_k$ is the conductor of $F_k/K$. Under
this assumption, we construct the Stickelberger splitting maps
$\Lambda_m$ (respectively $\Lambda_m^{et}$) for the $K$--theory
(respectively \'etale $K$--theory) of $F_k$, for all $k\geq 1$.
(See Lemma \ref{Lemma 7.4} and the constructions which lead to
it.) Note that, if combined with Theorem \ref{Tate's Theorem},
Theorem \ref{Greither-Popescu} shows that $\Theta_{m} ({\bf b},
{\bf f}_k)$ annihilates $K_{2m} ({\mathcal O}_{F_k})_l$, for $m=1$
and $l$ odd, under the assumption that $\mu_{F,l}=0$ (and
unconditionally if $F/\Bbb Q$ is abelian.) Also, in \cite{Po}, the
first author constructs an infinite class of abelian CM extensions
$F/K$ of an arbitrary totally real number field $K$ for which the
annihilation of $K_{2m}({\mathcal O}_{F_k})_l$  by $\Theta_{m}
({\bf b}, {\bf f}_k)$, for $m=1$ and $l$ odd is proved
unconditionally.

{\bf Step 2.} We use the map $\Lambda_m$ (respectively $\Lambda_m^{et})$) of Step 1 to construct special elements $\blambda_{v, l^k}$
(respectively $\blambda^{et}_{v, l^k}$) in the $K$--theory with coefficients $K_{2n}(O_{F,S_v}; \Bbb Z/l^k)$
(respectively \'etale $K$--theory with coefficients $K_{2n}^{et}(O_{F,S_v}; \Bbb Z/l^k)$), for all $n>0$, all $k\geq 0$ and all primes
$v$ in $O_F$, where $S_v$ is a sufficiently large finite set of primes in $F$.
(See Definition \ref{special-coefficients} .)

{\bf Step 3.} We use the special elements of Step 3 and a projective limit process with respect to $k$
to construct the Stickelberger splitting maps $\Lambda_n$ and $\Lambda^{et}_n$ taking values in
$K_{2n}(F)_l$ and $K_{2n}^{et}(F)$, respectively, for all $n\geq 1.$ (See Definition \ref{Definition 7.9}
and Theorem \ref{Theorem 7.10}.) This step generalizes the constructions in \cite{Ba1} to abelian CM extensions
of arbitrary totally real fields. It also eliminates
the extra-factor $l^{v_l(n)}$ which appeared in loc.cit. in the case $l|n$, for abelian CM extensions of $\Bbb Q$.

{\bf Step 4.} We use the special elements of Step 2 as well as the maps $\Lambda_n$ of Step 3 to construct Euler
Systems $\{\Lambda_n (\xi_{v ({\bf L})})\}_{\bf L}$ in the $K$-theory
without coefficients $\{K_{2n}(F_{\bf L})_l\}_{\bf L}$, for every $n>0$, where
${\bf L}$ runs through the squarefree ideals of $O_F$ which are coprime to ${\bf f}l$, $F_{\bf L}$ is the
ray class field of $F$ corresponding to ${\bf L}$ and $S$ is a sufficiently large finite set of primes in $O_F$.
(See Definitions \ref{ES1-definition} and \ref{ES-definition} as well as Theorem \ref{Theorem ES3}.) A similar construction of Euler systems in \'etale $K$--theory
can be done without difficulty. This step generalizes the constructions
of \cite{BG1} to the case of abelian CM extensions of totally real number fields. It is also worth noting that
while \cite{BG1} contains a construction of Euler systems only in the case of $K$--theory with coefficients,
we deal with both the $K$--theory with and without coefficients in the more general setting discussed in this paper.

 In the process, as a consequence of the construction of $\Lambda_n$ (Step 3),
we obtain a direct proof that $\Theta_n({\bf b}, \bf f)$
annihilates the group $div (K_{2n} (F)_l)$, for arbitrary CM
abelian extensions $F/K$ of totally real base field $K$ and all
$n>0$, under the assumption that $l>2$ and $\mu_{F,l}=0$  (see
Theorem \ref{Theorem 7.18}.)

In our upcoming work, we are planning on using the Euler systems described in Step 4 above to study the structure
of the groups of divisible elements $div K_{2n}(F)_l$, for all $n>0$ and all $l>2$.

\bigskip

We conclude this introduction with a few paragraphs showing that the groups
of divisible elements in the $K$--theory
of number fields lie at the heart of several important conjectures in number theory, which justifies
the effort to understand their structure in terms of special values of global $L$--functions.
In 1988, Warren Sinnott pointed out to the first author that
Stickelberger's Theorem for an abelian extension $F/\Q$
 or, more generally, Brumer's conjecture for a CM extension $F/K$
of a totally real number field  $K$ is equivalent to the existence of
a Stickelberger splitting map $\Lambda$ in the following basic exact sequence
$$
0 \stackrel{}{\longrightarrow} {\mathcal O}_{F}^{\times}
\stackrel{}{\longrightarrow} F^{\times}
{{\stackrel{\partial_F}{\longrightarrow}} \atop
{{\stackrel{\Lambda}{\longleftarrow}}}} \bigoplus_{v} \Z
\stackrel{}{\longrightarrow} Cl(O_F) \stackrel{}{\longrightarrow} 0
.$$
This means that $\Lambda$ is a group homomorphism, such that
$\partial_F \circ \Lambda$ is the multiplication by
$\Theta_{0} ({\bf b}, {\bf f}).$ Obviously, the above exact sequence
is the lower part of the Quillen localization sequence in $K$--theory, since
$K_1 ({\mathcal O}_{F}) = {\mathcal O}_{F}^{\times},$
$K_1 (F) = F^{\times},$ $K_0 (k_v) = \Z,$ $K_0 ({\mathcal O}_{F})_{\rm tors} = Cl(O_F)$
and Quillen's $\partial_F$ is the direct sum of the valuation maps in this case.

Further, by \cite{Ba2} p. 292 we observe that for any prime $l > 2$,
the annihilation of $div (K_{2n} (F)_l)$ by $\Theta_{n} ({\bf b}, {\bf f})$ is equivalent to
the existence of a ``splitting'' map $\Lambda$ in the following exact sequence
$$
0 \stackrel{}{\longrightarrow} K_{2n} ({\mathcal O}_{F}) [l^k]
\stackrel{}{\longrightarrow} K_{2n} (F) [l^k]
{{\stackrel{\partial_F}{\longrightarrow}} \atop
{{\stackrel{\Lambda}{\longleftarrow}}}} \bigoplus_{v} K_{2n - 1} (k_v) [l^k]
\stackrel{}{\longrightarrow} div
(K_{2n} (F)_l) \stackrel{}{\longrightarrow} 0
$$
such that $\partial_F \circ \Lambda$ is the multiplication by
$\Theta_{n} ({\bf b}, {\bf f})$, for any $k\gg 0$.
Hence, the group of divisible elements $div (K_{2n} (F)_l)$ is a direct analogue
of the $l$--primary part $Cl(O_F)_l$ of the class group.
Any two such ``splittings'' $\Lambda$ differ by a homomorphism
in ${\rm Hom} (\bigoplus_{v} K_{2n - 1} (k_v) [l^k], \,\, K_{2n} ({\mathcal O}_{F}) [l^k]).$
Moreover, the Coates-Sinnott conjecture is equivalent to the existence of a ``splitting'' $\Lambda$,
such that $\Lambda \circ \partial_F$ is the multiplication
by $\Theta_{n} ({\bf b}, {\bf f}).$ If the Coates-Sinnott conjecture holds,
then such a ``splitting'' $\Lambda$ is unique and
satisfies the property that $\partial_F \circ \Lambda$ is equal to the multiplication
by $\Theta_{n} ({\bf b}, {\bf f})$. This is due to the fact that $div(K_{2n} (F)_l) \subset
K_{2n} ({\mathcal O}_{F})_l.$ Clearly, in the case $div(K_{2n} (F)_l) =
K_{2n} ({\mathcal O}_{F})_l$, our map $\Lambda$ also has the property that
$\Lambda \circ \partial_F$ equals multiplication
by $\Theta_{n} ({\bf b}, {\bf f}).$ Observe that if the Quillen-Lichtenbaum
conjecture holds, then by Theorem 4 in \cite{Ba2}, we have
$$div(K_{2n} (F)_l) =
K_{2n} ({\mathcal O}_{F})_l\,\, \Leftrightarrow \,\, \left\vert
\frac{\prod_{v | l} w_n (F_v)}{w_n (F)} \right\vert_{l}^{-1} = 1.$$
In particular, for $F = \Q$ and $n$ odd, we have $w_n (\Q) = w_n (\Q_l) = 2$.
Hence, according to the Quillen-Lichtenbaum conjecture, for any $l > 2$
we should have $div (K_{2n} (\Q)_l) = K_{2n} (\Z)_l.$
\medskip

Now, let $A := Cl(\Z [\mu_l])_l$ and let
$A^{[i]}$ denote the eigenspace corresponding  to the $i$--th power of the
Teichmuller character $\omega\, :\, G(\Q(\mu_l) / \Q) \rightarrow (\Z/l\Z)^{\times}.$
Consider the following classical conjectures in cyclotomic field theory.
\begin{conjecture}[Kummer-Vandiver] \label{Conj. Kumer-Vandiver}
\begin{eqnarray}
A^{[l - 1 -n]} = 0 \quad \text{for all n even and} \quad 0 \leq n \leq l-1
\nonumber
\end{eqnarray}
\end{conjecture}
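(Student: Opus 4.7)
The Kummer--Vandiver conjecture has resisted proof since Kummer's time, so any proposal here must be tentative; what I can reasonably sketch is how the machinery of the present paper could feed into an attack. The plan is to exploit the bridge, emphasized in the paragraphs preceding the conjecture, between the vanishing of the even eigenspaces $A^{[l-1-n]}$ and the structure of the divisible groups $div\, K_{2n}(\Z)_l$.

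First, I would invoke the cyclotomic-descent picture (after Soul\'e and Kurihara) which, under the Quillen--Lichtenbaum conjecture, identifies $K_{2n}(\Z)_l \cong K_{2n}^{et}(\Z[1/l])$ and then reads off the $\omega^{l-1-n}$-eigenspace of $A$ from the codescent of an Iwasawa module built out of $K_{2n}^{et}$ over the cyclotomic tower. In this dictionary, the even eigenspaces of Kummer--Vandiver correspond to the ``plus part'' of the higher class groups, and their triviality is equivalent (via Theorem 4 of \cite{Ba2} cited in the excerpt) to the equality $div\, K_{2n}(\Q)_l = K_{2n}(\Z)_l$ together with the vanishing of the latter on the appropriate eigenspace.

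Second, I would apply the Euler system of Step 4 of the paper to the tower $\Q \subset \Q(\mu_l) \subset \Q(\mu_{l^k})$, using the Stickelberger splitting maps $\Lambda_n$ of Step 3 as the source of norm-compatible classes. The annihilation statement of Theorem~\ref{Theorem 7.18} already proves $\Theta_n({\bf b},{\bf f}) \cdot div\, K_{2n}(F)_l = 0$; pushing this to a genuine bound on $K_{2n}(\mathcal O_F)_l$ via a Kolyvagin--Rubin-style descent should, on the odd eigenspaces, recover the known Mazur--Wiles input, and on the even eigenspaces reduce Kummer--Vandiver to the non-vanishing (mod $l$) of a higher regulator of cyclotomic elements.

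The hard part will be precisely this last reduction. The Euler system constructed here delivers one half of the Kolyvagin machine, namely an upper bound on a Selmer-type group in terms of a special-value index; but Kummer--Vandiver amounts to the matching lower bound, for which no technique is currently known. In other words, the paper's tools give a clean formulation of the obstruction but do not remove it, which is consistent with the authors' stated plan to use the Euler systems to \emph{study}, rather than resolve, the structure of $div\, K_{2n}(F)_l$ in future work.
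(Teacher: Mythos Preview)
Your instinct is exactly right, and there is nothing to compare here: the paper offers no proof of this statement. It is explicitly labeled a \emph{Conjecture} (Conjecture~\ref{Conj. Kumer-Vandiver}), and the surrounding text merely records the well-known reformulation
\[
A^{[l-1-n]} = 0 \ \Longleftrightarrow\ div\,(K_{2n}(\Q)_l) = 0 \qquad (n\ \text{even},\ 1 \le n \le l-1),
\]
citing \cite{BG1} and \cite{BG2}, without attempting to establish either side. The authors explicitly say that in \emph{upcoming work} they plan to use the Euler systems of \S5 to \emph{study} the structure of $div\,K_{2n}(F)_l$; they make no claim to resolve Kummer--Vandiver.

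Your write-up correctly identifies the status of the problem and the shape of the obstruction. One small correction: the equivalence $div\,K_{2n}(\Q)_l = K_{2n}(\Z)_l$ that you cite from \cite{Ba2} is not itself the Kummer--Vandiver statement; under Quillen--Lichtenbaum it holds automatically for odd $n$ and $l>2$ (since $w_n(\Q) = w_n(\Q_l) = 2$), and Kummer--Vandiver is then the further assertion that this common group \emph{vanishes} for even $n$. Apart from that, your summary of what the Euler-system machinery can and cannot currently deliver --- an upper bound via Kolyvagin--Rubin descent, with the matching lower bound (the actual content of Kummer--Vandiver) out of reach --- is accurate and matches the authors' own framing.
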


\begin{conjecture}[Iwasawa] \label{Conj. Iwasawa}
\begin{eqnarray}
A^{[l - 1 -n]} \quad \text{is cyclic for all $n$ odd, such that} \quad
1 \leq n \leq l-2
\nonumber
\end{eqnarray}
\end{conjecture}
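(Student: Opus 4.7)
The plan is to exploit the Stickelberger splitting and Euler system machinery developed in this paper to extract structural information about $div(K_{2n}(\Q)_l)$, and then translate this, via Chern character isomorphisms, into the desired cyclicity of $A^{[l-1-n]}$ for odd $n$. I would specialize to $F=\Q$, $l>2$, and $n$ odd. Combining Theorem \ref{Tate's Theorem} (and, for $n\geq 2$, the Quillen--Lichtenbaum Conjecture \ref{Conj. Quillen-Lichtenbaum}) with the equality $w_n(\Q)=w_n(\Q_l)=2$ quoted just before the statement of Conjecture \ref{Conj. Kumer-Vandiver}, one has $div(K_{2n}(\Q)_l)=K_{2n}(\Z)_l\simeq H^2(\Z[1/l],\Z_l(n+1))$. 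Standard co-descent from $\Z[\mu_l,1/l]$ down to $\Z[1/l]$ then identifies this group with the appropriate Tate-twisted eigenspace of $A$, converting the Iwasawa cyclicity statement into a cyclicity statement about the corresponding $\omega$-eigenspace of $div(K_{2n}(\Q)_l)$.

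Next I would apply the Euler system $\{\Lambda_n(\xi_{v(\mathbf{L})})\}_{\mathbf{L}}$ constructed in Step 4 to the $\Z_l[G(\Q(\mu_l)/\Q)]$-module $K_{2n}(\Z)_l$. Using the Kolyvagin--Rubin derivative machinery, one would form, for each squarefree $\mathbf{L}$ coprime to $\mathbf{f}l$, derivative classes $\kappa_{\mathbf{L}}$ whose images in the relevant eigenspace can be paired against the factorized Stickelberger elements $\Theta_n(\mathbf{b},\mathbf{f}_{\mathbf{L}})$. The Euler-system norm relations, combined with the defining splitting property $\partial_F\circ\Lambda_n=\Theta_n(\mathbf{b},\mathbf{f})$, would then be used, along the lines of Rubin's arguments for cyclotomic units, to force the target eigenspace of $A$ to be generated by a single element over $\Z_l$.

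The hard part is structural rather than technical. Standard Euler-system arguments bound characteristic, annihilator, or Fitting ideals, and thus eigenspace orders, but they do not directly yield module structure. To obtain cyclicity one would need either (i) a refinement producing the exact Fitting ideal of $A^{[l-1-n]}$, in the spirit of the Fitting-ideal theorems of \cite{GP}, together with an independent proof that this Fitting ideal is principal, or (ii) a direct proof that the Kolyvagin derivatives of our Euler system yield a single generator, which demands that the Euler system be sufficiently rich on the integral level. Neither is available at present, and this gap is precisely what keeps the Iwasawa conjecture open; the hope implicit in the authors' announced programme is that the Quillen $K$-theoretic Euler systems of Step 4 carry more integral information than their cyclotomic-unit predecessors and can be pushed further in this direction.
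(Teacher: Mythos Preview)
The statement you are attempting to prove is labeled \emph{Conjecture} in the paper, and the paper offers no proof of it. Iwasawa's cyclicity conjecture is stated in the introduction purely as motivation: the authors display it alongside the Kummer--Vandiver conjecture and then record the equivalence
\[
A^{[l-1-n]}\ \text{is cyclic}\ \Longleftrightarrow\ div(K_{2n}(\Q)_l)\ \text{is cyclic}
\]
(for $n$ odd, $1\le n\le l-2$), referring to \cite{BG1} and \cite{BG2} for this translation. That is the entire content of the paper regarding Conjecture~\ref{Conj. Iwasawa}; there is no argument to compare your proposal against.

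You yourself correctly identify the obstruction: Euler-system methods of Kolyvagin--Rubin type bound orders and Fitting ideals but do not, without additional input, determine module structure, and nothing in the present paper closes that gap. Your final paragraph is an accurate assessment of why the conjecture remains open. So the proposal is not a proof, nor does it claim to be one, and there is no corresponding proof in the paper for it to match or diverge from.
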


\noindent We can state the Kummer-Vandiver and Iwasawa
conjectures in terms of divisible elements in $K$--theory of $\Q$ (see \cite{BG1} and \cite{BG2}):
\begin{itemize}
\item[(1)]
$A^{[l - 1 -n]} = 0$ $\Leftrightarrow$ $div (K_{2n} (\Q)_l) = 0,\text{ for all } n$
\text{even, with} $1\leq n \leq (l-1).$
\item[(2)]
$A^{[l - 1 -n]}$ \text{is cyclic} $\Leftrightarrow$
$div (K_{2n} (\Q)_l)$\text{ is cyclic, for all $n$
odd, with} $n \leq (l-2).$
\end{itemize}
\medskip

Finally, we would like to point out that the groups of divisible elements discussed in this paper are also related
to the Quillen-Lichtenbaum conjecture.
Namely, by comparing the exact sequence
of \cite{Sch}, Satz 8 with the exact sequence of \cite{Ba2}, Theorem 2 we conclude
that the Quillen-Lichtenbaum conjecture
for the $K$-group $K_{2n} (F)$ (for any number field $F$ and any prime $l > 2$)
holds if and only if
$$div (K_{2n} (F)_l) = K_{2n}^{w} ({\mathcal O}_{F})_l$$
where $K_{2n}^{w} ({\mathcal O}_{F})_l$ is the wild kernel defined in
\cite{Ba2}.


\noindent
\section{Basic facts about the Stickelberger ideals}
Let $F / K$ be an abelian CM extension of a totally real number field
$K.$ Let ${\bf f}$ be the conductor of $F / K$ and let $K_{{\bf
f}} / K$ be the ray class field extension corresponding to ${\bf
f}.$ Let $G_{\bf f} := G(K_{{\bf f}} / K).$ Every element of
$G_{\bf f}$ is the Frobenius morphism $\sigma_{{\bf a}}$, for some
ideal $\bf a$ of ${\mathcal O}_K$, coprime to the conductor $\bf
f$. Let $({\bf a}, F)$ denote the image of $\sigma_{{\bf a}}$ in
$G(F / K)$ via the natural surjection $G_{\bf f} \rightarrow G(F /
K).$ Choose a prime number $l.$
\medskip

\noindent With the usual notations, we let $I({\bf f}) / P_{1}
({\bf f})$ be the ray class group of fractional ideals in $K$
coprime to ${\bf f}.$ Let ${\bf a}$ and ${\bf a}^{\prime}$ be two
fractional ideals in $I({\bf f}).$ The symbol ${\bf a} \equiv {\bf
a}^{\prime}  \mod {\bf f}$ will mean that ${\bf a}$ and ${\bf
a}^{\prime}$ are in the same class modulo $P_{1} ({\bf f}).$ For every ${\bf a}\in I({\bf f})$
we consider the partial zeta function of \cite{C},
p. 291, given by
\begin{equation}
\zeta_{{\bf f}} ({\bf a}, s) := \sum_{{\bf c} \equiv {\bf a}  \,
\rm{mod} \,  {\bf f} } \,\, {\frac{1} {N{\bf c}^s}}\,,\qquad {\rm Re}(s)>1,
\label{2.0}\end{equation} where the sum is taken over the integral
ideals $\bf c\in I(\bf f)$ and $N\bf c$ denotes the usual norm of
the integral ideal $\bf c$. \noindent The partial zeta
$\zeta_{{\bf f}} ({\bf a}, s)$ can be meromorphically continued to
the complex plane with a single pole at $s = 1.$ For $s\in\Bbb
C\setminus\{1\}$, consider the Sickelberger element of [C], p.
297,
\begin{equation}
\Theta_{s} ({\bf b}, {\bf f}) := (N {\bf b}^{s+1} - ({\bf b}, \,
F)) \sum_{\bf a}\, \zeta_{{\bf f}} ({\bf a}, -s) ({\bf a},\,
F)^{-1}\in \Bbb C[G(F/K)] \label{2.1}\end{equation} where ${\bf b}$ is an integral ideal in $I(\bf f)$ and the summation is over a
finite set $\mathcal S$ of ideals ${\bf a}$ of ${\mathcal O}_{K}$
coprime to ${\bf f}$, chosen such that the Artin map
$$\mathcal S\longrightarrow G(K_{\bf f} /
K)\,,\quad \bf a\longrightarrow \sigma_{\bf a}$$ is bijective. The
element $\Theta_{s} ({\bf b}, {\bf f})$ can be written in the
following way

\begin{equation}
\Theta_{s} ({\bf b}, {\bf f}) :=
\sum_{\bf a}\, \Delta_{s+1} ({\bf a}, {\bf b}, {\bf f} ) ({\bf a} ,\, F)^{-1},
\label{2.7}\end{equation}
where
\begin{equation}
\Delta_{s+1} ({\bf a} , {\bf b} , {\bf f} ) :=
N {\bf b} ^{s+1} \zeta_{{\bf f}} ({\bf a} , -s) -
\zeta_{{\bf f}} ({\bf a} {\bf b} , -s).
\label{2.8}\end{equation}

Arithmetically, the Stickelberger elements $\Theta_{s} ({\bf b},
{\bf f})$ are most interesting for values $s = n$, with $n\in \N \cup
\{0\}.$ If ${\bf a}, {\bf b}, {\bf f}$ are integral ideals, such
that ${\bf a} {\bf b}$ is coprime to ${\bf f}$,  then Deligne and
Ribet \cite{DR} proved that $\Delta_{n+1} ({\bf a}, {\bf b}, {\bf f})$
are $l$-adic integers for all primes $l \not |\, N {\bf b}$ and
all $n \geq 0$. Moreover, in loc.cit. it is proved that
\begin{equation}
\Delta_{n+1} ({\bf a} , {\bf b} , {\bf f} ) \equiv N ({\bf a} {\bf
b} )^n \Delta_{1} ({\bf a}, {\bf b}, {\bf f}) \mod w_{n}(K_{\bf
f}). \label{2.9}\end{equation} As usual, if $L$ is a number field,
then $w_{n}(L)$ is the largest number $m \in \N$ such that the
Galois group $G(L (\mu_m) / L)$ has exponent dividing $n.$ Note
that $$w_{n}(L) = |H^0 (G(\overline{L}/L), \, \Q / \Z(n))|\,,$$
where $\Q /\Z (n) := \oplus_{l} \Q_l / \Z_l (n).$ By Theorem 2.4
of [C],  the results in \cite{DR} lead to
$$\Theta_{n} ({\bf b}, {\bf f}) \in \Z[G(F/ K)],$$ whenever
${\bf b}$ is coprime to $w_{n+1} (F).$ The ideal of $\Z [G (F /
K)]$ generated by the elements $\Theta_{n} ({\bf b}, {\bf f})$,
for all integral ideals ${\bf b}$ coprime to $w_{n+1} (F)$ is
called the $n$-th Stickelberger ideal for $F/K.$

\medskip
When $K \subset F \subset E$ is a tower of finite abelian
extensions then $$Res_{E/F} \, : \, G(E/K) \rightarrow G(F/K),\qquad Res_{E/F}: \Bbb C[G(E/K)]\rightarrow \Bbb C[G(F/K)]$$
denote the restriction map and its $\Bbb C$--linear extension at the level of group rings, respectively.
If ${\bf f}\, | \, {\bf f}^{\prime}$ and ${\bf f}$ and ${\bf
f}^{\prime}$ are divisible by the same prime ideals of ${\mathcal
O}_K$ then,  for all ${\bf b}$ coprime to ${\bf f}$,  we have the
following equality (see \cite{C} Lemma 2.1, p. 292).

\begin{equation}
Res_{K_{{\bf f}^{\prime}}/K_{{\bf f}}} \,
\Theta_{s} ({\bf b}, {\bf f}^{\prime}) =
\Theta_{s} ({\bf b}, {\bf f}).
\label{2.2}\end{equation}
\medskip

\noindent Let ${\bf l}$ is a prime ideal of $O_K$ coprime to ${\bf
f}$. Then, we have
\begin{equation}
\zeta_{{\bf f}} ({\bf a}, s) :=
\sum_{{{\bf c} \equiv {\bf a}  \, \rm{mod} \,
{\bf f}} \atop {{\bf l}\, \nmid \, {\bf c}} }
\,\, {\frac{1}{N{\bf c}^s}} +
\sum_{{{\bf c} \equiv {\bf a} \, \rm{mod} \,  {\bf f}} \atop
{{\bf l}\, | \, {\bf c}}} \,\, {\frac{1}{N{\bf c}^s}}.
\label{2.3}\end{equation}
\noindent
Observe that we also have
\begin{equation}
\sum_{{{\bf c} \equiv {\bf a}  \, \rm{mod} \,  {\bf f}} \atop
{{\bf l}\, \nmid \, {\bf c}} } \,\, {\frac{1}{N{\bf c}^s}} =
\sum_{{{\bf a}^{\prime} \, \rm{mod} \,
{\bf l f} } \atop { {\bf a}^{\prime} \equiv {\bf a}
 \, \rm{mod} \,  {\bf f}} }  \sum_{{\bf c} \equiv {\bf a}^{\prime}
 \, \rm{mod} \,  {\bf l f} }
\,\, {\frac{1}{N{\bf c}^s}} =
\sum_{{{\bf a}^{\prime} \, \rm{mod} \,  {\bf l f} } \atop { {\bf a}^{\prime}
\equiv {\bf a}  \, \rm{mod} \,  {\bf f} }}
\zeta_{{\bf l f}} ({\bf a}^{\prime}, s)
\label{2.4}\end{equation}
\medskip

\noindent Let us fix a finite $\mathcal S$ of integral ideals
${\bf a}$ in $I(\bf f)$ as above. Observe that every class
corresponding to an integral ideal ${\bf a}$ modulo $P_{1} ({\bf
f})$ can be written uniquely as a class ${\bf l}{\bf
a}^{\prime\prime}$ modulo $P_{1} ({\bf f})$,  for some ${\bf
a}^{\prime\prime}$ from our set $\mathcal S$ of chosen integral
ideals. This
establishes a one--to--one correspondence between classes ${\bf
a}$ modulo $P_{1} ({\bf f})$ and ${\bf a}^{\prime\prime}$ modulo
$P_{1} ({\bf f}).$ If ${\bf l}\, | \,{\bf c}$,  we put ${\bf c} =
{\bf l} {\bf c}^{\prime}.$ Hence, we have the following equality.

\begin{equation}
\sum_{{{\bf c} \equiv {\bf a}  \, \rm{mod} \,   {\bf f}} \atop
{{\bf l}\, | \, {\bf c}} }
\,\, {\frac{1}{N{\bf c}^s}} \,\, = \,\,
{\frac{1}{N{\bf l}^s}}\sum_{{\bf c}^{\prime}\equiv {\bf a}^{\prime\prime}
\, \rm{mod} \,
{\bf f} } \,\, {\frac{1}{N{\bf c}^{\prime \, s}}} =
{\frac{1}{N{\bf l}^{s}}}\,
\zeta_{{\bf f}} ({\bf a}^{\prime\prime}, s)
\label{2.5}\end{equation}

\noindent Formulas (\ref{2.3}), (\ref{2.4}) and (\ref{2.5}) lead
to the following equality:

\begin{equation}
\zeta_{{\bf f}} ({\bf a}, s) -
{\frac{1}{N{\bf l}^{s}}} \zeta_{{\bf f}} ({\bf l^{-1}} {\bf a}, s)
= \sum_{{{\bf a}^{\prime} \, \rm{mod} \,  {\bf l f} } \atop { {\bf a}^{\prime}
\equiv {\bf a}  \, \rm{mod} \,  {\bf f} }}
\zeta_{{\bf l f}} ({\bf a}^{\prime}, s).
\label{2.51}\end{equation}

For all ${\bf f}$ coprime to ${\bf l}$ and for all ${\bf b}$
coprime to ${\bf l f}$, equality (\ref{2.51}) gives:

\begin{equation}
Res_{K_{{\bf l} {\bf f}}/K_{{\bf f}}} \,\, \Theta_{s} ({\bf b}, {\bf l} {\bf f})
= (1 - ({\bf l}, \, F)^{-1} N{\bf l}^{s}) \,
\Theta_{s} ({\bf b}, {\bf f})
\label{2.52}\end{equation}
\medskip

\noindent
Indeed we easily check that:
\begin{equation}
Res_{K_{{\bf l} {\bf f}}/K_{{\bf f}}} \, (N {\bf b}^{s+1} - ({\bf b}, \, F))
\sum_{{{\bf a}^{\prime} \, \rm{mod} \,  {\bf l f} }}
\zeta_{{\bf l f}} ({\bf a}^{\prime}, - s) ({\bf a}^{\prime},\, F)^{-1} =
\nonumber\end{equation}
\begin{equation}
(N {\bf b}^{s+1} - ({\bf b}, \, F))
\sum_{{{\bf a}  \, \rm{mod} \,  {\bf f} }}
\sum_{{{\bf a}^{\prime} \, \rm{mod} \,  {\bf l f} } \atop  {{\bf a}^{\prime}
\equiv {\bf a}  \, \rm{mod} \,  {\bf f}}}
\zeta_{{\bf l f}} ({\bf a}^{\prime}, - s) ({\bf a},\, F)^{-1} =
\nonumber\end{equation}
\begin{equation}
(N {\bf b}^{s+1} - ({\bf b}, \, F))
\sum_{{{\bf a} \, \rm{mod} \,  {\bf f}}}(\zeta_{{\bf f}} ({\bf a}, - s) -
N{\bf l}^{s} \zeta_{{\bf f}} ({\bf l}^{-1} {\bf a}, - s))
({\bf a},\, F)^{-1} =
\nonumber\end{equation}
\begin{equation}
(N {\bf b}^{s+1} - ({\bf b}, \, F))
(\sum_{{{\bf a} \, \rm{mod} \,  {\bf f}}}\zeta_{{\bf f}} ({\bf a}, - s)
({\bf a},\, F)^{-1}
- ({\bf l}, \, F)^{-1} N{\bf l}^{s} \,
\zeta_{{\bf f}} ({\bf l}^{-1} {\bf a}, - s) ({\bf l}^{-1} {\bf a},\, F)^{-1}) =
\nonumber
\end{equation}
\begin{equation}
(1 - ({\bf l}, \, F)^{-1} N{\bf l}^{s})
(N {\bf b}^{s+1} - ({\bf b}, \, F))
\sum_{{{\bf a} \, \rm{mod} \,  {\bf f}}}\zeta_{{\bf f}} ({\bf a}, - s)({\bf a},\, F)^{-1}
\nonumber\end{equation}

\begin{lemma}\label{Lemma 2.1}
Let ${\bf f}\, | \, {\bf f}^{\prime}$ be ideals of ${\mathcal
O}_K$ coprime to ${\bf b}.$ Then, we have the following.
\begin{equation}
Res_{K_{{\bf f}^{\prime}}/K_{{\bf f}}} \,\, \Theta_{s} ({\bf b}, {\bf f}^{\prime})
= \bigl( \, \prod_{{{\bf l}\nmid {\bf f}} \atop
{{\bf l} \, | \, {\bf f}^{\prime}}} \, (1 - ({\bf l}, \, F)^{-1} N{\bf l}^{s}) \, \bigr) \,\,
\Theta_{s} ({\bf b}, {\bf f})
\label{2.53}\end{equation}
\end{lemma}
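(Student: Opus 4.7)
The plan is to reduce the general restriction formula \eqref{2.53} to the two special cases already established in the preceding discussion, namely formula \eqref{2.2} (restriction when the two conductors share the same prime support) and formula \eqref{2.52} (restriction when a single new prime is introduced). The overall strategy is to interpolate a clever intermediate conductor ${\bf g}$ between ${\bf f}$ and ${\bf f}'$ so that the transition ${\bf f}' \to {\bf g}$ is governed by \eqref{2.2} and the transition ${\bf g} \to {\bf f}$ is governed by a finite iteration of \eqref{2.52}.

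Concretely, I would let $\{{\bf l}_1,\dots,{\bf l}_r\}$ denote the (distinct, pairwise coprime) prime ideals of $\mathcal{O}_K$ that divide ${\bf f}'$ but not ${\bf f}$, and set
\[
{\bf g} := {\bf f}\cdot {\bf l}_1{\bf l}_2\cdots {\bf l}_r.
\]
A direct check of prime valuations shows that ${\bf f}\mid {\bf g}\mid {\bf f}'$ and that ${\bf g}$ and ${\bf f}'$ are divisible by exactly the same prime ideals. Thus \eqref{2.2} applies to the pair $({\bf g},{\bf f}')$ and gives
\[
Res_{K_{{\bf f}'}/K_{{\bf g}}}\,\Theta_{s}({\bf b},{\bf f}')=\Theta_{s}({\bf b},{\bf g}).
\]
Since ${\bf b}$ is coprime to ${\bf f}'$, it is automatically coprime to ${\bf g}$ and to each ${\bf l}_i$, so the hypotheses remain valid throughout.

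Next, I would descend from ${\bf g}$ to ${\bf f}$ by peeling off the primes ${\bf l}_i$ one at a time. Define the chain ${\bf f}_0:={\bf f}$ and ${\bf f}_i:={\bf f}\cdot {\bf l}_1\cdots {\bf l}_i$ for $1\le i\le r$, so that ${\bf f}_r={\bf g}$. At each step, ${\bf l}_{i+1}$ is coprime to ${\bf f}_i$ (because ${\bf l}_{i+1}\nmid {\bf f}$ and ${\bf l}_{i+1}\ne {\bf l}_j$ for $j\le i$), so formula \eqref{2.52} applies and yields
\[
Res_{K_{{\bf f}_{i+1}}/K_{{\bf f}_i}}\,\Theta_{s}({\bf b},{\bf f}_{i+1})=(1-({\bf l}_{i+1},F)^{-1}N{\bf l}_{i+1}^{s})\,\Theta_{s}({\bf b},{\bf f}_i).
\]
Composing these restrictions along the tower $K_{{\bf f}}\subset K_{{\bf f}_1}\subset\cdots\subset K_{{\bf g}}$ and using transitivity of restriction, I would conclude
\[
Res_{K_{{\bf g}}/K_{{\bf f}}}\,\Theta_{s}({\bf b},{\bf g})=\Bigl(\prod_{i=1}^{r}(1-({\bf l}_i,F)^{-1}N{\bf l}_i^{s})\Bigr)\Theta_{s}({\bf b},{\bf f}).
\]
Finally, composing with the identity coming from \eqref{2.2} delivers the formula of the lemma.

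I do not expect any serious obstacle: both building blocks \eqref{2.2} and \eqref{2.52} are already in place, and the only subtleties are bookkeeping, namely verifying that each ${\bf f}_i$ is coprime to ${\bf b}$ and to the next prime ${\bf l}_{i+1}$ so that \eqref{2.52} may legitimately be invoked at every step, and that restriction of group-ring elements is transitive along the tower $K_{\bf f}\subset K_{\bf g}\subset K_{{\bf f}'}$. The only place where one must be a little careful is ensuring that the index set $\{{\bf l}\nmid {\bf f},\ {\bf l}\mid {\bf f}'\}$ in the product on the right-hand side of \eqref{2.53} matches exactly the set $\{{\bf l}_1,\dots,{\bf l}_r\}$ used in the iteration, which is immediate from the definition.
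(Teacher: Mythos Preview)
Your proposal is correct and follows exactly the approach indicated in the paper: the paper's own proof is the single sentence ``The lemma follows from (\ref{2.2}) and (\ref{2.52}),'' and you have simply spelled out the natural details, interpolating the intermediate conductor ${\bf g}={\bf f}\cdot{\bf l}_1\cdots{\bf l}_r$ so that (\ref{2.2}) handles the step from ${\bf f}'$ to ${\bf g}$ and an iteration of (\ref{2.52}) handles the descent from ${\bf g}$ to ${\bf f}$.
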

\begin{proof}
The lemma follows from (\ref{2.2}) and (\ref{2.52}).
\end{proof}

\begin{remark}
The property of higher Stickelberger elements given by the above Lemma will translate naturally
into the Euler System property of the special elements in Quillen $K$--theory constructed
in \S5 below.
\end{remark}

In what follows, for any given abelian extension $F/K$ of conductor $\bf f$, we
consider the field extensions $F (\mu_{l^k}) / K$, for all $k \geq
0$ and a fixed prime $l$, where $\mu_{l^k}$ denotes the group of roots of unity of order dividing $l^k$. We let ${\bf f}_k$ denote the conductor of
the abelian extension $F (\mu_{l^k}) / K.$ We suppress from the
notation the explicit dependence of ${\bf f}_k$ on $l$, since the
prime $l$ will be chosen and fixed once and for all in this paper.


\noindent
\section{Basic facts about algebraic $K$-theory}

\subsection{The Bockstein sequence and the Bott element}

\noindent Let us fix  a prime number $l$. For a ring $R$ we consider the Quillen $K$-groups
$$K_m
(R) := \pi_{m} (\Omega B Q P (R)) := [S^m, \, \Omega B Q P (R)]$$
(see \cite{Q1}) and the $K$-groups with coefficients
$$K_m (R, \,
\Z/l^k) := \pi_{m} (\Omega B Q P (R), \, \Z/l^k) := [M^{m}_{l^k},
\, \Omega B Q P (R)]$$ defined by Browder and Karoubi in \cite{Br}.
Quillen's $K$--groups  can also be computed using Quillen's plus
construction as $K_n (R) := \pi_{n} (BGL (R)^{+}).$  Any unital
homomorphism of rings $\phi\, :\, R \rightarrow R^{\prime}$
induces natural homomorphisms
$$\phi_{R \mid R^{\prime}}\, :\, K_{m} (R, \, \diamondsuit)
\stackrel{}{\longrightarrow}
K_m (R^{\prime}, \, \diamondsuit)$$
where $K_{m} (R, \, \diamondsuit)$ denotes either $K_{m} (R)$ or
$K_{m} (R, \, \Z/l^k).$
\bigskip

\noindent  Quillen $K$-theory and $K$-theory with coefficients admit
product structures:
$$K_n (R, \, \diamondsuit) \times K_m (R, \, \diamondsuit)
\stackrel{\ast}{\longrightarrow} K_{m + n} (R, \, \diamondsuit)$$
(see \cite{Q1} and \cite{Br}.) These induce graded ring structures on the groups $\bigoplus_{n
\geq 0} K_n (R, \, \diamondsuit).$
\medskip

\noindent  For a topological space $X$, there is a Bockstein exact
sequence
$$ \stackrel{}{\longrightarrow} \pi_{m+1} (X, \, \Z/l^k)
\stackrel{b}{\longrightarrow} \pi_m (X)
\stackrel{l^k}{\longrightarrow} \pi_m (X)
\stackrel{}{\longrightarrow} \pi_m (X, \, \Z/l^k)
\stackrel{}{\longrightarrow} $$ In particular, if we take $X :=
\Omega B Q P (R))$, we get the Bockstein exact sequence in
$K$-theory given by
\begin{equation}\label{Bokstein}\stackrel{}{\longrightarrow} K_{m+1} (R, \, \Z/l^k)
\stackrel{b}{\longrightarrow} K_m (R) \stackrel{l^k}{\longrightarrow}
K_m (R) \stackrel{}{\longrightarrow} K_m (R, \, \Z/l^k)
\stackrel{}{\longrightarrow}\end{equation}
\medskip

\noindent
For any discrete group $G$, we have:
$$
\pi_{n} (BG) \,\, = \,\,
\left\{
\begin{array}{lll}
G&\rm{if}&n = 1\\
0&\rm{if}&n > 1.\\
\end{array}\right.
$$
Consequently, for a commutative group $G$ and $X := BG$ the
Bockstein map $b$ gives an isomorphism $b\, : \, \pi_{2} (BG, \,
\Z/l^k) \stackrel{\cong}{\longrightarrow} G [l^k]$. Here, $G[m]$ denotes the $m$--torsion subgroup of the
commutative group $G$, for all $m\in\Bbb N$.
\medskip

\noindent
For a commutative ring with identity $R$ we have
$GL_1 (R) = R^{\times}.$ Assume that $\mu_{l^k} \subset R^{\times}$. Then
$R^{\times} [l^k] = \mu_{l^k}.$
Let $\beta$ denote the natural composition of maps:
$$\xymatrix{\mu_{l^k}
\ar[r]^{b^{-1}\qquad\quad} &\pi_{2} (BGL_1 (R); \Z/l^k)\ar[r]
&\pi_{2} (BGL (R); \Z/l^k)\ar[d]&\\
&&\pi_{2} (BGL (R)^{+}; \Z/l^k)\ar[r]^{\qquad=}
&K_2 (R, \, \Z/l^k)}$$
We fix a generator $\xi_{l^k}$
of $\mu_{l^k}$. We define the Bott element
\begin{equation}\label{Bott}\beta_{k} :=
\beta (\xi_{l^k}), \qquad  \beta_k\in K_2 (R; \, \Z/l^k)\end{equation}
as the image of $\xi_{l^k}$ via $\beta$.  Further, we let
$$\beta_{k}^{\ast \, n} :=
\beta_{k} \ast \dots \ast \beta_{k} \in K_{2n} (R; \, \Z/l^k).$$
The Bott element $\beta_{k}$ depends of course on the ring
$R$. However, we suppress this dependence from the notation since it will be always
clear where a given Bott element lives. For example, if
$\phi\, :\, R \rightarrow R^{\prime}$ is a
homomorphism of commutative rings containing $\mu_{l^k}$,  then it is clear from
the definitions that the map
$$\phi_{R \mid R^{\prime}}\, :\, K_{2} (R; \, \Z/l^k)
\stackrel{}{\longrightarrow}
K_2 (R^{\prime}, \, \Z/l^k)$$ transports the Bott element for
$R$ into the Bott element for $R^{\prime}$. By a slight abuse of notation, this will be written
as  $\phi_{R \mid R^{\prime}} (\beta_{k}) = \beta_{k}.$
\medskip

Dwyer and Fiedlander \cite{DF} constructed the \' etale $K$-theory $K_\ast^{et}(R)$ and \'etale $K$-theory with coefficients $K_\ast^{et}(R, \Bbb Z/l^k)$ for any commutative, Noetherian $\Bbb Z[1/l]$--algebra $R$.
Moreover, they proved that if $l>2$ then there are natural graded
ring homomorphisms, called the Dwyer-Friedlander maps:
\begin{equation}
K_{\ast} (R) \,\,
{\stackrel{}{\longrightarrow}} \,\,
K_{\ast}^{et} (R)
\label{infinitecoefDFmap}
\end{equation}
\begin{equation}
K_{\ast} (R; \Z/l^k ) \,\,
{\stackrel{}{\longrightarrow}} \,\,
K_{\ast}^{et} (R; \Z/l^k).
\label{finitecoefDFmap}
\end{equation}
If $R$ has finite $\Z/l$-cohomological dimension  then there are Atiyah-Hirzebruch
type spectral sequences (see \cite{DF}, Propositions 5.1, 5.2):
\begin{equation}
E_{2}^{p, -q} = H^p(R ; \Z_l (q/2)) \Rightarrow K_{q - p}^{et} (R).
\label{infinitecoefDFspecseq}
\end{equation}
\begin{equation}
E_{2}^{p, -q} = H^p(R ; \Z/l^k (q/2)) \Rightarrow K_{q - p}^{et} (R; \Z/l^k).
\label{finitecoefDFspecseq}
\end{equation}
\medskip

\noindent Throughout, we will denote by $r_{k'/k}$ the reduction maps at the level of coefficients
$$r_{k'/k}: K_\ast(R; \Bbb Z/l^{k'})\to K_\ast(R; \Bbb Z/l^k),$$
$$r_{k'/k}: K_\ast^{et}(R; \Bbb Z/l^{k'})\to K_\ast^{et}(R; \Bbb Z/l^k),$$
for any $R$ as above and $k'\geq k$.

\subsection{$K$-theory of finite fields}
Let $\F_q$ be the finite field with $q$ elements. In [Q3], Quillen
proved that:
$$
K_n (\F_q) \,\, \simeq \,\,
\left\{
\begin{array}{lll}
\Z &\rm{if}&n = 0\\
0&\rm{if}&n = 2m \quad \text{and} \quad m > 0\\
\Z/(q^m - 1)\Z & \rm{if} & n = 2m -1 \quad \text{and} \quad m > 0\\
\end{array}\right.
$$

\medskip

\noindent
Moreover, in loc.cit, pp. 583-585, it is also showed that for an inclusion
$i \, : \, \F_q \rightarrow \F_{q^f}$ of finite fields and all $n \geq 1$
the natural map
$$i \, : \, K_{2n-1} (\F_q) \rightarrow K_{2n-1} (\F_{q^f})$$
is injective and the transfer map
$$N \, : \, K_{2n-1} (\F_{q^f}) \rightarrow K_{2n-1} (\F_{q})$$
is surjective, where we simply write $i$ instead of
$i_{\F_q \mid \F_{q^f}}$ and $N$ instead of $Tr_{\F_{q^f}/\F_q}.$
Further (see loc.cit.,  pp. 583-585), $i$ induces an isomorphism
$$K_{2n-1} (\F_q) \cong K_{2n-1} (\F_{q^f})^{G(\F_{q^f} / \F_{q})}$$
and the $q$--power Frobenius automorphism $Fr_{q}$
(the canonical generator of $G(\F_{q^f} / \F_{q})$) acts on $K_{2n-1} (\F_{q^f})$
via multiplication by $q^n.$  Observe that
$$i \circ N = \sum_{i = 0}^{f-1} Fr_{q}^{i}.$$
Hence, we have the equalities
$$\text{Ker}\, N =  \, K_{2n-1} (\F_{q^f})^{Fr_{q} - Id}\,  =
 \, K_{2n-1} (\F_{q^f})^{q^n - 1}$$
since $\text{Ker} \, N$ is the kernel of multiplication
by $\sum_{i = 0}^{f-1} \, q^{ni} = {{q^{nf} - 1} \over {q^n - 1}}$
in the cyclic group $K_{2n-1} (\F_{q^f}).$
In particular, this shows that the norm map $N$ induces the following
group isomorphism
$$K_{2n-1} (\F_{q^f})_{G(\F_{q^f} / \F_{q})} \cong K_{2n-1} (\F_q). $$

\medskip

\noindent
By the Bockstein exact sequence \eqref{Bokstein} and Quillen's results above, we observe that
$$K_{2n} (\F_q,\, \Z/l^k) \stackrel{b}{\longrightarrow} K_{2n -1} (\F_q)[l^k]$$
is an isomorphism. Hence, $K_{2n} (\F_q,\, \Z/l^k)$ is a cyclic group.

Let us assume that $\mu_{l^k} \subset \F_{q}^{\times}$ (i.e. $l^k\mid q-1$.)
In this case, Browder \cite{Br} proved that the element
$\beta_{k}^{\ast \, n}$ is a generator of $K_{2n} (\F_q,\, \Z/l^k)$.
Dwyer and Friedlander \cite{DF} proved that there is a natural isomorphism of
graded rings:
$$K_{\ast} (\F_q,\, \Z/l^k) \stackrel{\cong}{\longrightarrow}
K_{\ast}^{et} (\F_q, \, \Z/l^k ).$$
By abuse of notation, let $\beta_{k}$ denote the image of the Bott
element defined in \eqref{Bott} via the natural isomorphism:
$$K_{2} (\F_q,\, \Z/l^k) \stackrel{\cong}{\longrightarrow}
K_{2}^{et} (\F_q, \, \Z/l^k ).$$
Then by Theorem 5.6 in \cite{DF} multiplication with
$\beta_{k}$ induces isomorphisms:
$$ \times \,  \beta_{k}  \, : \, K_{i} (\F_q,\, \Z/l^k)
\stackrel{\cong}{\longrightarrow} K_{i+2} (\F_q, \, \Z/l^k ),$$
$$ \times \, \beta_{k}  \, : \, K_{i}^{et} (\F_q,\, \Z/l^k)
\stackrel{\cong}{\longrightarrow} K_{i+2}^{et} (\F_q, \, \Z/l^k ).$$
In particular,  if  $l^k \, | \, q-1$ and $\alpha$ is a generator of
$K_{1} (\F_q,\, \Z/l^k) = K_{1} (\F_q) /l^k$, then
the element $\alpha \ast \beta_{k}^{\ast \, n-1}$ is a generator of the
cyclic group $K_{2n-1} (\F_q,\, \Z/l^k).$

\subsection{$K$-theory of number fields and their rings of integers}
Let $F$ be a number field. As usual, ${\mathcal O}_{F}$ denotes
the ring of integers in $F$ and $k_v$ is the residue field
for a prime $v$ of ${\mathcal O}_{F}.$
For a finite set of primes
$S$ of ${\mathcal O}_{F}$ the ring of $S$-integers of $F$ is denoted
${\mathcal O}_{F, S}.$
\bigskip

\noindent
Quillen \cite{Q2} proved that $K_{n}({\mathcal O}_{F})$ is a finitely generated
group for every $n \geq 0.$
Borel computed the ranks of the groups $K_{n}({\mathcal O}_{F})$ as follows:
$$
 K_n ({\mathcal O}_{F}) \otimes_{\Z} \Q \,\,\simeq \,\,
\left\{
\begin{array}{lll}
\Q   & \text{if} &  n=0\\
\Q^{r_{1} + r_{2} - 1} & \text{if} & n = 1\\
0 & \text{if} & n = 2m \quad \text{and} \quad n > 0\\
\Q^{r_1 + r_2}   & \text{if} & n \equiv 1 \mod 4 \quad
\text{and} \quad n \not= 1 \\
\Q^{r_2}  & \text{if} & n \equiv 3 \mod 4 \\
\end{array}\right.
$$

\noindent
We have the following localization
exact sequences in Quillen $K$-theory and $K$-theory with coefficients \cite{Q1}.

$$\stackrel{}{\longrightarrow} K_{m} ({\mathcal O}_{F}, \, \diamondsuit)
\stackrel{}{\longrightarrow} K_m (F, \, \diamondsuit)
\stackrel{\partial_F}{\longrightarrow}
\bigoplus_{v} K_{m-1} (k_v, \, \diamondsuit)
\stackrel{}{\longrightarrow} K_{m-1} ({\mathcal O}_{F}, \, \diamondsuit)
\stackrel{}{\longrightarrow}$$
\medskip

\noindent
Let $E/F$ be a finite extension. The natural maps in $K$-theory
induced by the embedding $i\, :\, F \rightarrow E$ and $\sigma\, :\,
E \rightarrow E,$ for $\sigma \in G(E/F)$, will be denoted
for simplicity by $i \, :\, K_{m} (F, \, \diamondsuit) \stackrel{}{\longrightarrow}
K_m (E, \, \diamondsuit)$ and $\sigma \, : \,
K_{m} (E, \, \diamondsuit) \stackrel{}{\longrightarrow}
K_m (E, \, \diamondsuit).$ Observe that  $i := i_{F \mid  E}$ and
$\sigma := \sigma_{E \mid E}$, according to the notation in
section 3.1.
\medskip

\noindent
In addition to the natural maps
$i,$ $\sigma,$ $\partial_F,$ $\partial_E,$ and product structures $\ast$ for
$K$-theory of $F$ and $E$ introduced above, we have (see \cite{Q1}) the transfer map

$$Tr_{E/F}\, :\, K_{m} (E, \, \diamondsuit) \stackrel{}{\longrightarrow}
K_m (F, \, \diamondsuit)$$
and the reduction map
$$r_v\, :\, K_{m} ({\mathcal O}_{F, S}, \, \diamondsuit)
\stackrel{}{\longrightarrow} K_m (k_v, \, \diamondsuit)$$
for any prime $v \notin S.$
\medskip

\noindent
The maps discussed above enjoy many compatibility properties.
For example,  $\sigma$ is naturally compatible with $i,$
$\partial_F,$ $\partial_E$, the product structure $\ast,$
$Tr_{E/F}$ and $r_w$ and $r_v.$ See e.g. \cite{Ba1} for explanations
of some of these compatibility properties.
Let us mention below two nontrivial such compatibility properties which will be used in what follows.
By a result of Gillet \cite{Gi}, we have the
following commutative diagrams in Quillen $K$-theory and $K$-theory with
coefficients:
\begin{equation}\label{Gillet}\xymatrix{
K_{m} (F, \, \diamondsuit) \times  K_{n} ({\mathcal O}_{F}, \, \diamondsuit)
\ar@<0.1ex>[d]^{\partial_F \times id}  \ar[r]^{\ast} &  K_{m+n} (F, \,
\diamondsuit)
\ar@<0.1ex>[d]^{\partial_F}\\
\bigoplus_{v} \, K_{m-1} (k_v, \, \diamondsuit) \times
K_{n} ({\mathcal O}_{F}, \, \diamondsuit) \quad\quad
\ar[r]^{\qquad\ast} & \quad\quad \bigoplus_{v} \, K_{m + n -1} (k_v, \,
\diamondsuit)
}\end{equation}
\medskip

\noindent
Let $E/F$ be a finite extension unramified over a prime
$v$ of ${\mathcal O}_{F}.$  Let $w$ be a prime of
${\mathcal O}_{E}$ over $v.$ From now on, we will write
$N_{w/v} \, := \, Tr_{k_w / k_v}.$
The following diagram shows the compatibility of transfer
with the boundary map in localization sequences for Quillen $K$-theory
and $K$-theory with coefficients.

\begin{equation}\label{transfer-compatibility}\xymatrix{
K_{m} (E, \, \diamondsuit)\ar[d]_{Tr_{E/F}}
\ar[r]^{\partial_E\qquad\qquad} &
\quad\qquad \bigoplus_{v} \bigoplus_{w\, | \,v} K_{m-1} (k_w, \, \diamondsuit)
\ar[d]^{\bigoplus_{v} \bigoplus_{w\, | \,v} N_{w/v}}\\
K_{m} (F, \, \diamondsuit)
\ar[r]^{\partial_F} &
\bigoplus_{v} \, K_{m -1} (k_v, \, \diamondsuit)}\end{equation}
\medskip
\noindent
where the direct sums are taken with respect primes $v$ in $F$ and $w$ in $E$, respectively.


\noindent
\section{Construction of $\Lambda$ and $\Lambda^{et}$ and first applications}

In this section, we construct special elements
in $K$-theory and \' etale $K$-theory with coefficients, under
the assumption that for some fixed $m > 0$ the Stickelberger elements
$\Theta_{m} ({\bf b}, {\bf f}_k)$ annihilate
$K_{2m} ({\mathcal O}_{F_{l^k}})$  for all $k \geq 0.$ This will produce
special elements in $K$-theory and \' etale $K$-theory without coefficients which are of primary
importance in our construction of Euler systems given in \S5. These constructions will also give us the
Stickelberger splitting maps $\Lambda := \Lambda_{n}$
and $\Lambda^{et} := \Lambda_{n}^{et}$ announced in the introduction. As a byproduct, we obtain a direct proof of the
annihilation of the groups divisible elements $div(K_{2n}(F)_l)$, for all $n>0$, generalizing the results of \cite{Ba1}.

All the results in this section are stated for both $K$-theory and
\' etale $K$-theory. However, detailed proofs will be given only
in the case of $K$-theory since the proofs in the case of \' etale
$K$-theory are very similar. The key idea in transferring the
$K$--theoretic constructions to \'etale $K$--theory is the
following. Replace Gillet's result \cite{Gi} for $K$-theory
(commutative diagram \eqref{Gillet} of \S3) with the compatibility
of the Dwyer-Friedlander spectral sequence with the product
structure (\cite{DF}, Proposition 5.4) combined with Soul{\' e}'s
observation (see \cite{So1}, p. 275) that the localization
sequence in \' etale cohomology (see \cite{So1}, p. 268) is
compatible with the product by the \' etale cohomology of
${\mathcal O}_{F, S}.$

\subsection{Constructing special elements in $K$--theory with coefficients} Let $L$ be a number field, such that $\mu_{l^k} \subset L.$ Let $S$ be a finite set
of prime ideals of ${\mathcal O}_{L}$ containing all primes over $l.$
Let $i \in \N$ and let $m \in \Z$, such that $i+2m>0$.
Then, for $R = L$ or $R = {\mathcal O}_{L, S}$ there is a natural group isomorphism
(see \cite{DF} Theorem 5.6):
\begin{equation}
K_{i}^{et} (R; \Z/l^k)
\, {\stackrel{\cong}{\longrightarrow}} \,
K_{i + 2 m}^{et} (R; \Z/l^k)
\label{etalebottmult}\end{equation}
which sends $\eta$ to
$\eta \ast \beta_{k}^{\ast \,  m}$ for any
$\eta \in K_{i}^{et} (R; \Z/l^k).$
If $m \geq 0$ this isomorphism is just the multiplication by
$\beta_{k}^{\ast \, m}.$
If $m < 0$ and $i + 2m > 0$, then the isomorphism (\ref{etalebottmult}) is
the inverse of the multiplication by $\beta_{k}^{\ast \, - m}$ isomorphism:
\begin{equation}
\ast \,\, \beta_{k}^{\ast \, - m}\,  : \,
K_{i + 2m}^{et}  (R; \Z/l^k) \,\, {\stackrel{\cong}{\longrightarrow}} \,\,
K_{i}^{et} (R; \Z/l^k).
\label{etalebottmult1}\end{equation}
\medskip

\noindent
Now, let us consider Quillen $K$-theory.
If $m \geq 0$, there is a natural homomorphism
\begin{equation}
\ast \, \, \beta^{\ast \, m} \, : \: K_{i}  (R; \Z/l^k)
\rightarrow K_{i + 2 m} (R; \Z/l^k)
\label{ktheorybottmult0}\end{equation}
which is just multiplication by  $\beta_{k}^{\ast \, m}.$
The homomorphism (\ref{ktheorybottmult0}) is compatible with the isomorphism
(\ref{etalebottmult}) via the Dwyer-Friedlander map.
If $m < 0$ and $i + 2m > 0$,  then take the homomorphism
\begin{equation}
t(m) \, : \: K_{i}  (R; \Z/l^k)
\rightarrow K_{i + 2 m} (R; \Z/l^k)
\label{ktheorybottmult00}\end{equation}
to be the unique homomorphism
which makes the following diagram commutative.
$$\xymatrix{
K_{i}  (R; \Z/l^k)
\ar@<0.1ex>[d]^{}\,  \quad \ar[r]^{\quad t (m)\quad }  &  \quad K_{i + 2 m}  (R; \Z/l^k)   \\
K_{i }^{et}  (R; \Z/l^k) \quad
\ar[r]^{\qquad (\ast \, \beta_{k}^{\ast \, - m})^{-1}\qquad }
&  \quad K_{i + 2 m}^{et}  (R; \Z/l^k)  \ar@<0.1ex>[u]^{} }
\label{ktheorybottmult}$$
The left vertical arrow is the Dwyer-Friedlander map, while the right vertical arrow is
the Dwyer-Friedlander splitting map (see \cite{DF}, Proposition 8.4.)
The latter map is obtained as the multiplication of the
inverse of the isomorphism
$K_{i^{\prime}}  (R; \Z/l^k) {\stackrel{\cong}{\longrightarrow}}
K_{i^{\prime}}^{et}  (R; \Z/l^k),$
for $i^{\prime} = 1 $ or $i^{\prime} = 2,$ by a nonnegative power of the Bott element
$\beta_{k}^{\ast \, m^{\prime}},$ with $m^{\prime} \geq 0$
(see the proof of Proposition 8.4 in \cite{DF}.)

\begin{remark} \label{Remark 7.1 on coeff change oF DF splitting}
It is clear that the Dwyer-Friedlander splitting
from \cite{DF}, Proposition 8.4 is compatible with the maps
$\Z/l^{j} \rightarrow \Z/l^{j-1}$ at the level of coefficients, for all $1 \leq j \leq k.$
Consequently, the map $t(m)$ is naturally compatible with these maps.
In addition, $t(m)$ is naturally compatible with the ring embedding
$R \rightarrow R^{\prime}$, where $R^{\prime} = L^{\prime}$ or
$R^{\prime} = {\mathcal O}_{L^{\prime}, S}$ for a number field extension
$L^{\prime}/L.$ Let
$$t^{et} (m) := (\ast \, \beta_{k}^{\ast \, - m})^{-1}.$$
It is clear from the above diagram that $t(m)$ and $t^{et} (m)$ are
naturally compatible with the Dwyer-Friedlander maps.
\end{remark}

\begin{lemma}\label{Lemma 7.2}
Let $L = F (\mu_{l^k})$ and let $i > 0$ and $m < 0$, such that $i + 2 m > 0.$
Then, for $R = L$ or $R = {\mathcal O}_{L, S}$, the natural group homomorphisms
$t^{et} (m)$  and $t(m)$ have the following properties:
\begin{equation}
t^{et} (m) (\alpha)^{\sigma_{{\bf a}}} =
t^{et} (m) (\alpha^{N  {\bf a}^m \sigma_{{\bf a}}})
\label{ktheorybottmult1}\end{equation}
\begin{equation}
t(m) (\alpha)^{\sigma_{{\bf a}}} =
t(m) (\alpha^{N  {\bf a}^m \sigma_{{\bf a}}})
\label{ktheorybottmult1}\end{equation}
for any ideal ${\bf a}$ of $O_F$ coprime to ${\bf f}_k$ and for $\alpha \in K_{i}^{et} (R; \Z/l^k)$ and
$\alpha \in K_{i} (R; \Z/l^k)$, respectively.
\end{lemma}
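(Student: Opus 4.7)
The plan is to reduce both identities to the Galois transformation law for the Bott element combined with the defining property of $t^{et}(m)$ as the inverse of multiplication by $\beta_k^{\ast\,-m}$.

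First, I would establish that for every ${\bf a}$ coprime to ${\bf f}_k$,
$$\beta_k^{\sigma_{\bf a}} \,=\, N{\bf a}\cdot\beta_k$$
in both $K_2(R;\Z/l^k)$ and $K_2^{et}(R;\Z/l^k)$. This follows from the construction (\ref{Bott}): the map $\beta$ is a composition of natural group homomorphisms starting with the Bockstein isomorphism $b^{-1}:\mu_{l^k}\stackrel{\cong}{\longrightarrow}\pi_2(BGL_1(R);\Z/l^k)$. All stages are Galois-equivariant by naturality, and the Artin symbol acts on $\mu_{l^k}\subset L=F(\mu_{l^k})\subset K_{{\bf f}_k}$ by $\xi_{l^k}^{\sigma_{\bf a}}=\xi_{l^k}^{N{\bf a}}$. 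Since $\beta$ is a homomorphism, the displayed formula follows; iterating via the natural product $\ast$ yields $(\beta_k^{\ast\,-m})^{\sigma_{\bf a}}=N{\bf a}^{-m}\cdot\beta_k^{\ast\,-m}$. Note that $N{\bf a}$ is an $l$-adic unit because $l\mid{\bf f}_k$, so the scalar $N{\bf a}^m$ (with $m<0$) acts meaningfully on $K$-theory with $\Z/l^k$-coefficients.

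Second, for the \'etale case, apply $\sigma_{\bf a}$ to the defining relation
$$t^{et}(m)(\alpha)\,\ast\,\beta_k^{\ast\,-m}\,=\,\alpha.$$
Using the $\Z$-bilinearity and naturality of $\ast$ together with the Galois formula from the previous step,
$$t^{et}(m)(\alpha)^{\sigma_{\bf a}}\,\ast\,\beta_k^{\ast\,-m}\,=\,N{\bf a}^{m}\,\alpha^{\sigma_{\bf a}}\,=\,\alpha^{N{\bf a}^m\sigma_{\bf a}}.$$
Applying $t^{et}(m)=(\ast\,\beta_k^{\ast\,-m})^{-1}$ to both sides produces the identity $t^{et}(m)(\alpha)^{\sigma_{\bf a}}=t^{et}(m)(\alpha^{N{\bf a}^m\sigma_{\bf a}})$, which is the desired (\ref{ktheorybottmult1}) for $t^{et}(m)$.

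Third, for the Quillen case, I would invoke the defining commutative square for $t(m)$ built from the Dwyer-Friedlander map and its splitting, as recalled in Remark~\ref{Remark 7.1 on coeff change oF DF splitting}. Both arrows are natural in $R$, hence commute with $\sigma_{\bf a}$ acting as a ring automorphism of $R=L$ or $R=\mathcal{O}_{L,S}$; a diagram chase then transports the \'etale identity to the Quillen identity. The main obstacle is verifying Galois-equivariance of the Dwyer-Friedlander splitting, which is assembled from inverses of the low-degree isomorphisms $K_{i'}(R;\Z/l^k)\cong K_{i'}^{et}(R;\Z/l^k)$ for $i'\in\{1,2\}$ together with multiplication by a nonnegative power of the Bott element. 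Both ingredients are $\sigma_{\bf a}$-equivariant by functoriality in $R$, so the splitting is equivariant and the diagram chase goes through cleanly.
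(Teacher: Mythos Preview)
Your proposal is correct and is precisely the argument the paper has in mind. The paper's own proof of Lemmas~\ref{Lemma 7.2} and~\ref{Lemma 7.3} is the single sentence ``The properties\ldots\ follow directly from the definition of the maps $t^{et}(m)$ and $t(m)$,'' and what you have written is exactly the unpacking of that sentence: the Galois action on the Bott element via $\xi_{l^k}^{\sigma_{\bf a}}=\xi_{l^k}^{N{\bf a}}$, applied to the defining relation $t^{et}(m)(\alpha)\ast\beta_k^{\ast\,-m}=\alpha$, together with naturality of the Dwyer--Friedlander map and splitting to pass to the Quillen case. One small remark: your justification that $N{\bf a}$ is an $l$-adic unit can be made even more robust by noting that $K$ is totally real, so $\mu_{l^k}\not\subset K$ for $l$ odd and $k\geq 1$, forcing the primes above $l$ to divide ${\bf f}_k$; alternatively, the mere fact that $\sigma_{\bf a}$ restricts to an automorphism of $\mu_{l^k}$ already forces $N{\bf a}\in(\Z/l^k)^\times$.
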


\begin{lemma}\label{Lemma 7.3}
If $i \in \{1, 2\},$ $\alpha \in K_{i} (R; \Z/l^k)$ and $n + m > 0$
then
\begin{equation}
t^{et} (m) (\alpha \ast \beta_{k}^{\ast \, n }) =
\alpha \ast \beta_{k}^{\ast \, n + m }.
\label{etalenegativebottmult2}\end{equation}
\begin{equation}
t (m) (\alpha \ast \beta_{k}^{\ast \, n }) =
\alpha \ast \beta_{k}^{\ast \, n + m }.
\label{ktheorynegativebottmult2}\end{equation}
\end{lemma}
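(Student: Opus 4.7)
The plan is to reduce both identities to the associativity of the product $\ast$ in (étale) $K$--theory with coefficients, together with the explicit description of the Dwyer--Friedlander splitting map recalled after the definition of $t(m)$.

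First I would dispose of the case $m \geq 0$. Here $t^{et}(m)$ and $t(m)$ are by definition just multiplication by $\beta_k^{\ast m}$ (the latter via the Dwyer--Friedlander splitting commutative diagram, in which the middle horizontal arrow is then itself $\ast\,\beta_k^{\ast m}$). Both identities therefore reduce to
\[
\alpha \ast \beta_k^{\ast n} \ast \beta_k^{\ast m} \,=\, \alpha \ast \beta_k^{\ast n+m},
\]
which is associativity of $\ast$ in the graded ring $\bigoplus_{j\geq 0}K_j(R;\Z/l^k)$ (respectively, its étale counterpart).

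Next I would treat the case $m < 0$ with $n+m>0$ in étale $K$--theory. Since $t^{et}(m)=(\ast\,\beta_k^{\ast -m})^{-1}$ is by definition the inverse of the isomorphism (\ref{etalebottmult}), the identity \eqref{etalenegativebottmult2} is equivalent to the check
\[
\bigl(\alpha \ast \beta_k^{\ast\, n+m}\bigr) \ast \beta_k^{\ast\, -m} \,=\, \alpha \ast \beta_k^{\ast\, n},
\]
which is once again associativity (note that $n+m>0$ ensures that $\beta_k^{\ast\, n+m}$ lives in the nonnegatively--graded part where the product was defined).

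Finally, for $m<0$ in Quillen $K$--theory, I would read off the identity from the commutative diagram defining $t(m)$. The Dwyer--Friedlander map is a graded ring homomorphism preserving the Bott element, so it sends $\alpha\ast\beta_k^{\ast n}$ to $\mathrm{DF}(\alpha)\ast\beta_k^{\ast n}$ in $K^{et}_{i+2n}(R;\Z/l^k)$. By the étale case just proved, applying $t^{et}(m)$ yields $\mathrm{DF}(\alpha)\ast\beta_k^{\ast n+m}$. The Dwyer--Friedlander splitting, as described in \cite{DF}, Proposition 8.4, is constructed by inverting the isomorphism $K_{i'}(R;\Z/l^k)\stackrel{\cong}{\to}K_{i'}^{et}(R;\Z/l^k)$ in degree $i'\in\{1,2\}$ and then multiplying by a nonnegative power of $\beta_k$. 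Applied to $\mathrm{DF}(\alpha)\ast\beta_k^{\ast n+m}$, with $\alpha$ in degree $i\in\{1,2\}$, it therefore returns exactly $\alpha\ast\beta_k^{\ast n+m}$ in $K_{i+2(n+m)}(R;\Z/l^k)$, which is \eqref{ktheorynegativebottmult2}.

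The main obstacle, as I see it, is making the last paragraph watertight: one must invoke the precise form of the Dwyer--Friedlander splitting to say that it acts as the inverse of $\mathrm{DF}$ on the low--degree factor and as the identity on Bott--powers. This is exactly why the hypothesis $i\in\{1,2\}$ is needed: it places $\alpha$ inside the range where $\mathrm{DF}$ itself is known to be an isomorphism, so that the splitting can be applied factor by factor. Everything else is bookkeeping with $\ast$ and with the compatibilities recorded in Remark \ref{Remark 7.1 on coeff change oF DF splitting}.
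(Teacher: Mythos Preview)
Your proposal is correct and is precisely an explicit unpacking of the paper's one-line proof, which simply asserts that the properties ``follow directly from the definition of the maps $t^{et}(m)$ and $t(m)$.'' Your case analysis, the reduction of the \'etale identity to associativity of $\ast$, and your use of the explicit description of the Dwyer--Friedlander splitting (inverse of $\mathrm{DF}$ in degree $i'\in\{1,2\}$ followed by multiplication by a nonnegative Bott power) are exactly the ingredients the definitions provide, and your observation that the hypothesis $i\in\{1,2\}$ is what makes the splitting recover $\alpha$ on the nose is the right reason this works.
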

\begin{proof}[Proof of Lemmas 4.2 and 4.3] The properties in Lemmas \ref{Lemma 7.2} and \ref{Lemma 7.3}
follow directly from the definition of the
maps $t^{et} (m)$ and $t(m).$
\end{proof}

If $v$ is a prime of ${\mathcal O}_{L, S},$
$m < 0$ and $i + 2m > 0$, then we construct the morphism
\begin{equation}
t_v (m) \, : \: K_{i}  (k_v ; \Z/l^k)
\rightarrow K_{i + 2 m} (k_v ; \Z/l^k)
\label{ktheorybottmult00}\end{equation}
in the same way as we have done for ${\mathcal O}_{L, S}$ or $L.$
Namely, $t_v (m)$ is the homomorphism which makes the following diagram commute.
$$\xymatrix{
K_{i}  (k_v ; \Z/l^k)
\ar@<0.1ex>[d]^{\cong}\,  \quad \ar[r]^{t_v (m)}  &  \quad K_{i + 2 m}  (k_v ; \Z/l^k)   \\
K_{i}^{et}  (k_v; \Z/l^k) \quad
\ar[r]^{(\ast \, \beta_{k}^{\ast \, - m})^{-1}}
&  \quad K_{i + 2 m}^{et}  (k_v; \Z/l^k)  \ar@<0.1ex>[u]^{\cong} }
\label{ktheorybottmult3}$$
The right vertical arrow is
the inverse of the Dwyer-Friedlander map which, in the case of a finite field,
is clearly seen to be equal to the Dwyer-Friedlander splitting map described above.

Similarly to $t^{et} (m)$ we can construct $t_{v}^{et} (m):=(\ast \, \beta_{k}^{\ast \, - m})^{-1}.$
We observe that the maps $t (m)$ and $t_v (m)$ are compatible with the reduction
maps and the boundary maps. In other words, we have the following commutative diagrams.

$$\xymatrix{
K_{i}  ({\mathcal O}_{L, S} ; \, \Z/l^k)
\ar@<0.1ex>[d]^{t (m)}\,  \quad \ar[r]^{r_v}  &  \quad K_{i }  (k_v; \, \Z/l^k)
 \ar@<0.1ex>[d]^{t_v (m)}  \\
K_{i + 2 m}  ({\mathcal O}_{L, S} ; \, \Z/l^k) \quad
\ar[r]^{r_v}
&  \quad K_{i + 2 m}  (k_v; \, \Z/l^k)}
\label{ktheorybottmult4}$$
\medskip

\noindent
$$\xymatrix{
 K_{i} ({\mathcal O}_{L, S}, \, \Z/l^k)
\ar@{>}[d]^{t (m)} \ar[r]^{\partial}  & \bigoplus_{v \in S} \, \,
K_{i-1} (k_v; \, \Z/l^k) \ar@<0.1ex>[d]^{t_v (m)} \\
K_{i + 2m} ({\mathcal O}_{L, S}; \, \Z/l^k) \ar[r]^{\partial} \,\, & \bigoplus_{v
\in S} \, \, K_{i - 1 + 2m} (k_v; \, \Z/l^k) }
\label{CommDiagrQuilEtale1}$$
\medskip
\noindent
Let us point out that we have similar commutative diagrams
for \'etale $K$-theory and the maps $t^{et} (m)$ and $t_{v}^{et} (m).$

\medskip
As observed above, the map $t(m)$ for $m < 0$ has
the same properties as the multiplication by $\beta^{\ast \, m}$
for $m \geq 0.$ So, we make the following.

\begin{definition} For $m < 0$, we define the symbols $$\alpha \ast
\beta^{\ast \, m} : = t (m) (\alpha), \qquad
\alpha_v \ast
\beta^{\ast \, m} : = t_v (m) (\alpha_v),$$
for all
$\alpha \in K_{i} ({\mathcal O}_{L}; \, \Z/l^k)$
and $\alpha_v \in K_{i} (k_v; \Z/l^k))$, respectively.
For $m \geq 0$, the symbols $\alpha \ast \beta^{\ast \, m}$
and $\alpha_v \ast \beta^{\ast \, m}$ denote the usual products.
\end{definition}
\medskip

Let $m  > 0$ be a natural number. Throughout the rest of this section
we assume that $\Theta_{m} ({\bf b}, {\bf f}_k)$ annihilates
$K_{2m} ({\mathcal O}_{F_{l^k}})$ for all $k \geq 0.$
For a prime $v$ of ${\mathcal O}_{F}$, let
$k_v$ be its residue field and $q_v$ the cardinality of $k_v$ . Similarly, for any prime
$w$ of ${\mathcal O}_{F_{l^k}}$, we let $k_w$ be its residue field.
We put $E := F_{l^k}.$
If $v \not\,\mid l,$ we observe that $k_w = k_v (\xi_{l^k})$, since the corresponding
local field extension $E_{w}/F_{v}$ is unramified.
For any finite set $S$ of primes in $O_F$ and any $k\geq 0$,
there is an exact sequence (see \cite{Q2}):

$$
0 \stackrel{}{\longrightarrow} K_{2 m} ({\mathcal O}_{F_{l^k}})
\stackrel{}{\longrightarrow} K_{2 m} ({\mathcal O}_{F_{l^k}, \, S})
\stackrel{\partial}{\longrightarrow} \bigoplus_{v \in S} \bigoplus_{w | v}
K_{2 m -1} (k_w) \stackrel{}{\longrightarrow} 0
$$

Let $\xi_{w, k} \in K_{2 m - 1} (k_w)_l$ be a generator of the $l$-torsion part of
$K_{2 m -1} (k_w)$.
Pick an element $x_{w, k} \in K_{2 m} ({\mathcal O}_{F_{l^k}, S})_l$ such that
${\partial} (x_{w, k}) = \xi_{w, k}.$ Obviously,
$x_{w, k}^{\Theta_{m} ({\bf b}, {\bf f}_k)}$ does not depend on the choice of $x_{w, k}$
since $\Theta_{m} ({\bf b}, {\bf f}_k)$ annihilates
$K_{2 m} ({\mathcal O}_{F_{l^k}}).$ If $\text{ord}(\xi_{w, k}) = l^a$, then
$x_{w, k}^{l^a} \in K_{2 m} ({\mathcal O}_{F_{l^k}}).$ Hence,
$(x_{w, k}^{\Theta_{m} ({\bf b}, {\bf f}_k)})^{l^a} =
(x_{w, k}^{l^a})^{\Theta_{m} ({\bf b}, {\bf f}_k)} = 0.$
Consequently, there is a well defined map:

$$\Lambda_{m}\, : \, \bigoplus_{v \in S} \bigoplus_{w | v} K_{2 m - 1} (k_w)_l
\stackrel{}{\longrightarrow} K_{2 m} ({\mathcal O}_{F_{l^k}, \, S})_l ,$$
\begin{equation}
\Lambda_{m} (\xi_{w, k}) \, := \, x_{w, k}^{\Theta_{m} ({\bf b}, {\bf f}_k)}.
\label{firststepsplitting}\end{equation}
\medskip

If $R$ is either a number field $L$ or its ring of
$(S,l)$--integers ${\mathcal O}_{L, S}[1/l]$, for some finite set $S\subseteq {\rm Spec}(\mathcal O_L)$,  Tate proved in
\cite{Ta2} that there is a natural isomorphism:
$$K_{2} (R)_l \,
{\stackrel{\cong}{\longrightarrow}} \, K_{2}^{et} (R).$$
Dwyer and
Friedlander \cite{DF} proved that the natural maps:
$$K_{j} (R; \Z/l^k ) \,
{\stackrel{}{\longrightarrow}} \, K_{j}^{et} (R; \Z/l^k),$$
are surjections for $j\geq 1$ and isomorphisms for $j = 1, 2.$
As explained in \cite{Ba2}, for any number
field $L$, any finite set $S \subset \text{Spec} ({\mathcal
O}_{L})$ and any $j\geq 1$, we have the following commutative diagrams with exact
rows and (surjective) Dwyer-Friedlander maps as vertical arrows.
$$\xymatrix{
0  \ar[r]^{} & K_{2j} ({\mathcal O}_{L})_l
\ar@{>>}[d]^{} \ar[r]^{}  & K_{2j} ({\mathcal O}_{L, S})_l
\ar@{>>}[d]^{} \ar[r]^{\partial}  & \bigoplus_{v \in S} \, \,
K_{2j-1} (k_v)_l \ar@<0.1ex>[d]^{\cong} \ar[r]^{} & 0 \\
0  \ar[r]^{} & K_{2j}^{et} ({\mathcal O}_{L}[1/l])  \ar[r]^{}  &
K_{2j}^{et} ({\mathcal O}_{L, S}[1/l]) \ar[r]^{\partial^{et}}  & \bigoplus_{v
\in S} \, \, K_{2j-1}^{et} (k_v) \ar[r]^{} & 0}
\label{CommDiagrQuilEtale1}$$ For $j = 1$, the left and the middle
vertical arrows in the above diagram are also isomorphisms,
according to Tate's theorem. If the Quillen-Lichtenbaum conjecture holds, then
these are isomorphisms for all $j>0$.
\medskip

Our assumption that $\Theta_{m} ({\bf b}, {\bf
f}_k)$ annihilates $K_{2 m} ({\mathcal O}_{F_{l^k}})$ for all $k
\geq 0$ implies that $\Theta_{m} ({\bf b}, {\bf f}_k)$ annihilates
$K_{2 m}^{et} ({\mathcal O}_{F_{l^k}}[1/l])$, for all $k \geq 0.$
In the diagram above, let $y_{w, k}$ and
$\zeta_{w, k}$ denote the images of  $x_{w, k}$ and $\xi_{w, k}$
via the middle vertical and right vertical arrows, respectively.
Then, we define
$$\Lambda_{m}^{et} (\zeta_{w, k}) \, := \, y_{w, k}^{\Theta_{m} ({\bf b}, {\bf f}_k)}.$$
Clearly, the following diagram is commutative
$$\xymatrix{
K_{2 m} ({\mathcal O}_{F_{l^k}, S})_l
\ar@<0.1ex>[d]^{}\,   &  \ar[l]_{\Lambda_m\quad } \bigoplus_{v \in S} \bigoplus_{w | v}
\, K_{2 m - 1} (k_w)_l \ar@<0.1ex>[d]^{\cong} \\
K_{2 m}^{et} ({\mathcal O}_{F_{l^k}, S}[1/l]) &
\ar[l]_{\Lambda_{m}^{et}\quad }  \bigoplus_{v \in S} \bigoplus_{w | v}
\, K_{2 m - 1}^{et} (k_w)_l} \label{CommDiagrQuilEtale3}$$
where the vertical maps are the Dwyer-Friedlander maps.
\begin{lemma}\label{Lemma 7.4}
The maps $\Lambda_{m}$ and $\Lambda_{m}^{et}$ satisfy
the following properties
$$\partial \Lambda_{m} (\xi_{w, k}) \, :=
\, \xi_{w, k}^{\Theta_{m} ({\bf b}, {\bf f}_k)},\qquad
\partial^{et} \Lambda_{m}^{et} (\zeta_{w, k}) \, :=
\, \zeta_{w, k}^{\Theta_{m} ({\bf b}, {\bf f}_k)}.$$
\end{lemma}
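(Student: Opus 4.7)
The plan is to observe that the statement is essentially a formal consequence of Galois equivariance of the boundary map $\partial$ in the Quillen localization sequence, combined with the defining property $\partial(x_{w,k}) = \xi_{w,k}$ that was built into the construction of $\Lambda_m$.

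More precisely, the first step is to note that the localization sequence
\[
0 \longrightarrow K_{2m}(\mathcal{O}_{F_{l^k}}) \longrightarrow K_{2m}(\mathcal{O}_{F_{l^k},S}) \stackrel{\partial}{\longrightarrow} \bigoplus_{v\in S}\bigoplus_{w|v} K_{2m-1}(k_w) \longrightarrow 0
\]
is a sequence of $\mathbb{Z}[G(F_{l^k}/K)]$--modules, with $\partial$ Galois-equivariant. This follows from naturality of $\partial$ with respect to the Galois automorphisms $\sigma \colon F_{l^k}\to F_{l^k}$ acting on both $\mathcal{O}_{F_{l^k},S}$ and the residue fields $k_w$ (the same compatibility that underlies diagram (3.9) for transfer maps). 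In particular, $\partial$ extends $\mathbb{Z}[G(F_{l^k}/K)]$-linearly to the $l$-primary parts. Thus for any element $\alpha \in \mathbb{Z}[G(F_{l^k}/K)]$ and any $x \in K_{2m}(\mathcal{O}_{F_{l^k},S})_l$, we have $\partial(x^\alpha) = \partial(x)^\alpha$.

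Applying this with $\alpha = \Theta_m({\bf b}, {\bf f}_k)$ and $x = x_{w,k}$ gives
\[
\partial \Lambda_m(\xi_{w,k}) \;=\; \partial\bigl(x_{w,k}^{\Theta_m({\bf b}, {\bf f}_k)}\bigr) \;=\; \partial(x_{w,k})^{\Theta_m({\bf b}, {\bf f}_k)} \;=\; \xi_{w,k}^{\Theta_m({\bf b}, {\bf f}_k)},
\]
which is the desired first identity. The second identity follows by exactly the same argument applied to the \'etale localization sequence: its boundary $\partial^{et}$ is likewise Galois-equivariant, and by construction $\partial^{et}(y_{w,k}) = \zeta_{w,k}$ (this uses the commutative square relating Quillen and \'etale localization sequences via the Dwyer--Friedlander maps, which is already displayed just before the statement of the lemma). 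Hence
\[
\partial^{et}\Lambda_m^{et}(\zeta_{w,k}) \;=\; \partial^{et}\bigl(y_{w,k}^{\Theta_m({\bf b}, {\bf f}_k)}\bigr) \;=\; \zeta_{w,k}^{\Theta_m({\bf b}, {\bf f}_k)}.
\]

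There is no real obstacle here; the content of the lemma is simply to record that the formal identity $\partial\circ\Lambda_m = \Theta_m({\bf b},{\bf f}_k)\cdot\mathrm{id}$ (and its \'etale analogue) falls out of the construction once Galois equivariance of $\partial$ is noted. The only minor thing to be careful about is to justify that $\Lambda_m$ is a well-defined group homomorphism on the generators $\xi_{w,k}$, but this was already observed in the paragraph preceding the statement (using the annihilation hypothesis on $K_{2m}(\mathcal{O}_{F_{l^k}})$ to kill the indeterminacy in the lift $x_{w,k}$).
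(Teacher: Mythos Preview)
Your proof is correct and follows essentially the same approach as the paper, which simply states that the lemma follows immediately from the compatibility of $\partial$ and $\partial^{et}$ with the Galois action. Your write-up is a more detailed unpacking of exactly this observation: Galois equivariance of $\partial$ together with the defining relation $\partial(x_{w,k})=\xi_{w,k}$ gives the result in one line.
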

\begin{proof}
The lemma follows immediately by compatibility of $\partial$ and
$\partial^{et}$ with the $G (E/F)$ action.
\end{proof}

Let us fix an $n\in\Bbb N$. Let $v$ be a prime in ${\mathcal O}_{F}$ sitting above $p \not= l$
in $\Z$. Let $S := S_v$ be the finite set primes of ${\mathcal
O}_{F}$ consisting of all the primes over $p$ and all the primes over $l.$
Let $k (v)$ be the natural number for which $l^{k(v)}\,
|| \, q_{v}^{n} - 1.$ Observe that if $l \, | \, q_v -1$ then
$k(v) = v_{l}(q_v -1) + v_{l}(n)$ (see e.g. \cite[p. 336]{Ba1}.)

\begin{definition}\label{gamma-l} As in loc.cit. p. 335, let us
define:
\begin{equation}
\gamma_{l} := \prod_{{{\bf l} \, \not \, | \, {\bf f}} \atop
{{\bf l} \, | \, l}} \, (1 - ({\bf l}, \, F)^{-1} N{\bf l}^{n})^{-1} =
\prod_{{{\bf l} \, \not \, | \, {\bf f}} \atop
{{\bf l} \, | \, l}} \, (1 + ({\bf l}, \, F)^{-1} N{\bf l}^{n} +
({\bf l}, \, F)^{-2} N{\bf l}^{2n} + \cdots ).
\nonumber
\end{equation}
If ${\bf l} | \, {\bf f}$
for every ${\bf l} \, | \, l$ then naturally we let $\gamma_l := 1.$
Observe that $\gamma_l$ is a well defined operator on any
$\Z_l [G(F/K)]$-module which is a torsion abelian group with a finite exponent.
\end{definition}

\begin{definition}\label{special-coefficients}
For all $k \geq 0$ and $E := F(\mu_{l^k})$, let us define elements:
$${{\blambda}}_{v, l^k} :=
Tr_{E/F} ( \Lambda_m(\xi_{w, k})\ast
\beta_{k}^{\ast \, n-m})^{N {\bf b}^{n-m} \gamma_l} \in K_{2n} ({\mathcal
O}_{F, S};\, \Z/l^k)).$$
Similarly, define elements:
$$\blambda^{et}_{v, l^k} :=
Tr_{E/F} ( \Lambda_m^{et}(\zeta_{w, k}) \ast
\beta_{k}^{\ast \, n-m})^{N {\bf b}^{n-m} \gamma_l} \in K_{2n}^{et}
({\mathcal O}_{F, S};\, \Z/l^k)).$$
\end{definition}
\medskip

\noindent
Obviously, $\blambda^{et}_{v, l^k}$ is the image of $\blambda_{v, l^k}$ via the Dwyer-Friedlander map.
\bigskip

Let us fix a prime sitting above $v$ in each of the fields $F(\mu_{l^k})$,
such that if $k\leq k'$ and $w$ and $w'$ are the fixed primes in $E =F(\mu_{l^k})$ and
$E':=F(\mu_{l^{k'}})$, respectively, then $w'$ sits above $w$. By the surjectivity of
the transfer maps for $K$-theory of finite fields (see the end of
\S3), we can associate to each $k$ and the chosen prime $w$ in
$E=F(\mu_{l^k})$ a generator $\xi_{w, k}$ of $K_{2m-1}(k_w)_l$
and a generator $\zeta_{w, k}$ of $K_{2m-1}^{et}(k_w)_l$, such that
$$N_{w^{\prime}/w}(\xi_{w^{\prime}, k^{\prime}})= \xi_{w, k},\qquad
N_{w^{\prime}/w}(\zeta_{w^{\prime}, k^{\prime}})= \zeta_{w, k},$$
for all $k\leq k'$, where $w$ and $w'$ are the fixed primes in $E =F(\mu_{l^k})$ and
$E' =F(\mu_{l^{k'}})$, respectively.
\bigskip

\begin{lemma}\label{Lemma 7.5} With notations as above, for every
$k \leq k^{\prime}$ we have
$$r_{k^{\prime}/k} ( N_{w^{\prime}/v}
(\xi_{w^{\prime}, k^{\prime}} \ast \beta_{k^{\prime}}^{\ast \, n-m})) =
N_{w/v} (\xi_{w, k} \ast \beta_{k}^{\ast \, n-m}),$$
$$r_{k^{\prime}/k} ( N_{w^{\prime}/v}
(\zeta_{w^{\prime}, k^{\prime}} \ast \beta_{k^{\prime}}^{\ast \, n-m})) =
N_{w/v} (\zeta_{w, k} \ast \beta_{k}^{\ast \, n-m}).$$
\end{lemma}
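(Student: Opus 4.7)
The plan is to reduce the identity to three standard compatibilities in Quillen $K$--theory with coefficients: naturality of the coefficient--reduction $r_{k'/k}$ with respect to transfers, transitivity of the transfer $N_{w'/v}=N_{w/v}\circ N_{w'/w}$ (valid here since $w'\mid w\mid v$ is unramified), and the projection formula for the transfer.

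First I would commute $r_{k'/k}$ past the outer transfer by naturality to rewrite the left-hand side as
$$N_{w'/v}\bigl(r_{k'/k}(\xi_{w',k'}\ast\beta_{k'}^{\ast(n-m)})\bigr).$$
The generator $\xi_{w',k'}$ lies in $K_{2m-1}(k_{w'})_l$ (i.e.\ without coefficients), hence is fixed by $r_{k'/k}$, while $r_{k'/k}$ is compatible with the product. This reduces matters to computing $r_{k'/k}(\beta_{k'})$ in $K_2(k_{w'};\,\Z/l^k)$. Here I would invoke the compatibility of the Bockstein triangle \eqref{Bokstein} with the coefficient--change map $\Z/l^{k'}\twoheadrightarrow\Z/l^k$ and fix a compatible system of primitive roots of unity $\xi_{l^{k'}}^{l^{k'-k}}=\xi_{l^k}$; the key identity is then $r_{k'/k}(\beta_{k'})=\beta_k$, where on the right $\beta_k$ is viewed in $K_2(k_{w'};\,\Z/l^k)$ as the image of the Bott element of $k_w$ under the natural inclusion $i_{w,w'}\colon k_w\hookrightarrow k_{w'}$.

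Next I would apply transitivity $N_{w'/v}=N_{w/v}\circ N_{w'/w}$ and then the projection formula to the inner norm:
$$N_{w'/w}\bigl(\xi_{w',k'}\ast i_{w,w'}(\beta_k^{\ast(n-m)})\bigr)=N_{w'/w}(\xi_{w',k'})\ast\beta_k^{\ast(n-m)}.$$
By the compatible choice of generators made just before the statement of the lemma, $N_{w'/w}(\xi_{w',k'})=\xi_{w,k}$; applying $N_{w/v}$ to both sides then yields precisely $N_{w/v}(\xi_{w,k}\ast\beta_k^{\ast(n-m)})$, which is the desired right-hand side. The \'etale assertion follows by the same argument, the analogous naturality, transitivity and projection formula all being available in \'etale $K$--theory via the Dwyer--Friedlander spectral sequence \eqref{finitecoefDFspecseq} and the compatibilities noted in Remark \ref{Remark 7.1 on coeff change oF DF splitting}.

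The only delicate point is bookkeeping: one must keep careful track of the coefficient system in which each Bott element lives and apply the projection formula with respect to the correct inclusion of residue fields. No serious obstacle arises; once the identity $r_{k'/k}(\beta_{k'})=\beta_k$ is granted, the remainder is a direct chase through naturality squares.
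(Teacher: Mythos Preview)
Your argument is essentially identical to the paper's for the case $n-m\geq 0$: commute $r_{k'/k}$ with the transfer, use $r_{k'/k}(\beta_{k'})=\beta_k$, factor $N_{w'/v}=N_{w/v}\circ N_{w'/w}$, apply the projection formula to the inner norm, and invoke the chosen compatibility $N_{w'/w}(\xi_{w',k'})=\xi_{w,k}$. That part is fine and matches the paper line for line.

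However, you do not address the case $n-m<0$, and this is a genuine gap. In that regime the symbol $\alpha\ast\beta_k^{\ast(n-m)}$ is \emph{not} a product in Quillen $K$-theory with coefficients; by Definition~4.4 it denotes $t_v(n-m)(\alpha)$ (resp.\ $t(n-m)(\alpha)$), which is built by passing through the Dwyer--Friedlander map to \'etale $K$-theory, inverting multiplication by $\beta_k^{\ast\,-(n-m)}$ there, and returning via the Dwyer--Friedlander splitting. The projection formula you appeal to is therefore not the standard one and requires justification. The paper treats this case separately: it notes that the Dwyer--Friedlander maps commute with $N_{w/v}$ and $N_{w'/v}$, and then invokes the projection formula for \emph{negative} Tate twists in \'etale cohomology, via the identifications $K_{2j-1}^{et}(\F_q;\,\Z/l^k)\cong H^1(\F_q;\,\Z/l^k(j))$ coming from the spectral sequence~\eqref{finitecoefDFspecseq}. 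Your sketch needs this case split and a sentence along these lines to be complete.
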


\begin{proof} First, let us consider the case $n - m \geq 0.$
The formula follows by the compatibility of the elements
$(\xi_{w, k})_{w}$ with respect to the norm maps, by
the compatibility of Bott elements with respect to the coefficient reduction map
$r_{k^{\prime}/k} (\beta_{k^{\prime}}) = \beta_{k},$
and by the projection formula. More precisely,
we have the following equalities:
\begin{eqnarray}
\nonumber  r_{k^{\prime}/k} ( N_{w^{\prime}/v}(
\xi_{w^{\prime}, k^{\prime}} \, \ast \, \beta_{k^{\prime}}^{\ast \, n-m})) &=& N_{w^{\prime}/v} (r_{k^{\prime}/k} (
\xi_{w^{\prime}, k^{\prime}} \, \ast \, \beta_{k^{\prime}}^{\ast \, n-m})) =\\
\nonumber  N_{w^{\prime}/v} (
\xi_{w^{\prime}, k^{\prime}} \, \ast \, \beta_{k}^{\ast \, n-m})) &=& N_{w/v} (N_{w^{\prime}/w} (\xi_{w^{\prime}, k^{\prime}}) \ast
\beta_{k}^{\ast \, n-m}))= \\
\nonumber &=&  N_{w/v} (\xi_{w, k} \ast \beta_{k}^{\ast \, n-m}).
\end{eqnarray}
\noindent
Next, let us consider the case $n - m < 0.$ Observe that the Dwyer-Friedlander maps commute with
$N_{w/v}$ and $N_{w^{\prime}/v}.$ Hence we can argue in the same way as in the case
$n - m \geq  0$ by using the projection formula for the negative twist
in \'etale cohomology, since for any finite field $\F_q$ with $l \not\,\, \mid   q,$
we have natural isomorphisms coming from the Dwyer-Friedlander
spectral sequence (cf. the end of \S3.1):
\begin{equation}
K_{2j-1}^{et} (\F_q) \cong H^1 (\F_q; \, \Z_{l} (j))
\label{finitefieldDF1}\end{equation}
\begin{equation}
K_{2j - 1}^{et} (\F_q; \, \Z/l^k) \cong H^1 (\F_q; \, \Z/l^k (j)).
\label{finitefieldDF2}\end{equation}
\end{proof}

\begin{lemma}\label{Lemma 7.6}
For all $0\leq k \leq k^{\prime}$, we have
$$r_{k^{\prime}/k} (\blambda_{v, l^{k'}}) =
\blambda_{v, l^{k}}$$
$$r_{k^{\prime}/k} (\blambda^{et}_{v, l^{k'}}) = \blambda_{v, l^{k}}.$$
\end{lemma}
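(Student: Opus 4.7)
\emph{Plan of proof.} The plan is to unfold the definition of $\blambda_{v, l^{k'}}$, push $r_{k'/k}$ through every outer operation, and recognize the result as $\blambda_{v, l^k}$. Since $r_{k'/k}$ is natural with respect to the transfer $Tr_{E'/F}$ and with respect to the $\Z[G(F/K)]$-action of the operator $N{\bf b}^{n-m}\gamma_l$, and since $\Lambda_m(\xi_{w', k'})\in K_{2m}(\mathcal{O}_{E', S})_l$ carries no coefficients for $r_{k'/k}$ to touch, the only factor affected is the Bott element: $r_{k'/k}(\beta_{k'}^{\ast(n-m)}) = \beta_k^{\ast(n-m)}$. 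This reduces the lemma to the identity
\begin{equation*}
Tr_{E'/F}\bigl(\Lambda_m(\xi_{w', k'}) \ast \beta_k^{\ast(n-m)}\bigr) = Tr_{E/F}\bigl(\Lambda_m(\xi_{w, k}) \ast \beta_k^{\ast(n-m)}\bigr),
\end{equation*}
which I attack by factoring $Tr_{E'/F} = Tr_{E/F} \circ Tr_{E'/E}$ and invoking Gillet's projection formula to extract the Bott factor $\beta_k^{\ast(n-m)}$ (which lies over $E$) from the inner transfer. For $n-m<0$ the same pull-out is performed via the map $t(n-m)$ of Remark 4.1, which is compatible with both transfer and $r_{k'/k}$.

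The heart of the argument is then the identification $Tr_{E'/E}(\Lambda_m(\xi_{w', k'})) = \Lambda_m(\xi_{w, k})$. Since $G(E'/K)$ is abelian, the transfer is Galois-equivariant via the surjection $G(E'/K) \twoheadrightarrow G(E/K)$, yielding $Tr_{E'/E}(y^\theta) = Tr_{E'/E}(y)^{\bar\theta}$ for $\theta \in \Z[G(E'/K)]$ with image $\bar\theta$. Applied to $\theta = \Theta_m({\bf b}, {\bf f}_{k'})$ and combined with Lemma \ref{Lemma 2.1},
\begin{equation*}
Tr_{E'/E}\bigl(\Lambda_m(\xi_{w', k'})\bigr) = \Bigl(\,\prod_{{\bf l} \nmid {\bf f}_k,\, {\bf l} \mid {\bf f}_{k'}}(1 - ({\bf l}, E)^{-1} N{\bf l}^m)\Bigr) \cdot Tr_{E'/E}(x_{w', k'})^{\Theta_m({\bf b}, {\bf f}_k)}.
\end{equation*}
Compatibility of the transfer with the boundary $\partial$, together with the norm-compatibility $N_{w'/w}(\xi_{w', k'}) = \xi_{w, k}$ of the chosen generators, shows that $Tr_{E'/E}(x_{w', k'})$ is a valid lift of $\xi_{w, k}$ in $K_{2m}(\mathcal{O}_{E, S})_l$, so under the standing annihilation hypothesis (which makes $\Lambda_m$ independent of the lift) the right-hand factor is precisely $\Lambda_m(\xi_{w, k})$.

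It remains to check that the Euler product in front is trivially $1$. Any prime ${\bf l}$ of $K$ dividing ${\bf f}_{k'}$ but not ${\bf f}_k$ must arise from ramification introduced by enlarging $\mu_{l^k}$ to $\mu_{l^{k'}}$, hence must lie over $l$; but since $l$ is odd and $K$ is totally real we have $K \not\supseteq \mu_l$, so every prime of $K$ over $l$ already ramifies in $K(\mu_l) \subseteq E$ and thus already divides ${\bf f}_k$ for every $k \geq 1$ (the case $k = 0$ is trivial since $\Z/l^0 = 0$). Reassembling the pieces gives $r_{k'/k}(\blambda_{v, l^{k'}}) = \blambda_{v, l^k}$, and the \'etale statement follows by precisely the same argument, substituting Gillet's compatibility by the product-compatibility of the Dwyer--Friedlander spectral sequence and Soul\'e's \'etale localization sequence as outlined at the opening of \S4. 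The main obstacle is formalizing the Galois-equivariance of the transfer for the non-$G(E'/E)$-invariant Stickelberger element, and rigorously handling $n-m<0$ through $t(n-m)$; the rest is careful bookkeeping driven by Lemma \ref{Lemma 2.1}.
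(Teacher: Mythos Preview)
Your proof is correct and follows essentially the same route as the paper's: push $r_{k'/k}$ past the transfer and the $N{\bf b}^{n-m}\gamma_l$-action, factor $Tr_{E'/F}=Tr_{E/F}\circ Tr_{E'/E}$, use the projection formula to pull $\beta_k^{\ast(n-m)}$ out of the inner transfer, and reduce to $Tr_{E'/E}(\Lambda_m(\xi_{w',k'}))=\Lambda_m(\xi_{w,k})$, which both you and the paper obtain from the transfer--boundary compatibility diagram together with the lift-independence of $\Lambda_m$; the case $n-m<0$ is handled by both via passage to \'etale cohomology (your $t(n-m)$). The only notable difference is that you make explicit the role of Lemma~\ref{Lemma 2.1} and argue that the resulting Euler product is empty, whereas the paper passes directly from $\Theta_m({\bf b},{\bf f}_{k'})$ to $\Theta_m({\bf b},{\bf f}_k)$ without comment (compare the formula \eqref{RestofStickinIwasawaTower} later in the paper, which encodes the same fact). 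One small caveat: your justification that ``every prime of $K$ over $l$ ramifies in $K(\mu_l)/K$'' is not literally true in general (it fails when the completion $K_{\bf l}$ already contains $\mu_l$); what one actually needs, and what the paper implicitly uses, is only that the set of primes dividing ${\bf f}_k$ is the same for all $k\geq 1$, i.e.\ that no \emph{new} prime of $K$ over $l$ starts to ramify when passing from $F(\mu_{l^k})$ to $F(\mu_{l^{k'}})$.
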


\begin{proof}
Consider the following commutative diagram:
$$\xymatrix{
K_{2m} ({\mathcal O}_{E^{\prime}, S}) \quad \ar@<0.1ex>[d]^{Tr_{E^{\prime}/E}}
\ar[r]^{\bigoplus_{w^{\prime} \in S} \partial_{w^{\prime}}} \quad &
\quad
\bigoplus_{w^{\prime} \in S}  K_{2m-1} (k_{w^{\prime}})
\ar@<0.1ex>[d]^{\bigoplus_{w \in S} \bigoplus_{w^{\prime} | w} N_{{w^{\prime} / w}}}\\
K_{2m} ({\mathcal O}_{E, S}) \quad
\ar[r]^{\bigoplus_{w \in S} \partial_{w}} &
\quad \bigoplus_{w \in S} \, K_{2m-1} (k_w)}
\label{diagram 2.5}$$
It follows that we have
$Tr_{E^{\prime}/E} ({x}_{w^{\prime}, k^{\prime}})^{\Theta_{m} ({\bf b}, {\bf f}_k)} =
{x}_{w, k}^{\Theta_{m} ({\bf b}, {\bf f}_k)}.$
Hence the case $n - m \geq 0$ follows by the projection formula:
$$r_{k^{\prime}/k} (Tr_{E^{\prime}/F}
(x_{w^{\prime}, k^{\prime}}^{\Theta_{m} ({\bf b}, {\bf f}_{k^{\prime}})}
\ast \, \beta_{k^{\prime}}^{\ast \, n-m})^{N {\bf b}^{n-m} \gamma_{l}}) =$$
$$ = Tr_{E/F} (Tr_{E^{\prime}/E}
(x_{w^{\prime}, k^{\prime}}^{\Theta_{m} ({\bf b}, {\bf f}_{k^{\prime}})}
\ast \, \beta_{k}^{\ast \, n-m}))^{N {\bf b}^{n-m} \gamma_{l}} =$$
$$ = Tr_{E/F} (x_{w, k}^{\Theta_{m} ({\bf b}, {\bf f}_{k})}
\ast \, \beta_{k}^{\ast \, n-m})^{N {\bf b}^{n-m} \gamma_{l}}.$$
Now, consider the case $n - m < 0.$ We observe that $Tr_{E^{\prime}/E}$ commutes with the
Dwyer-Friedlander map.
Hence $Tr_{E^{\prime}/E}$ also commutes with the splitting of the Dwyer-Friedlander map
since the splitting is a monomorphism. By the Dwyer-Friedlander spectral sequence for any
number field $L$ and any finite set $S$ of prime ideals of ${\mathcal O}_{L}$
containing all primes over $l,$ we have the following isomorphism
\begin{equation}
K_{2j}^{et} ({\mathcal O}_{L, S}) \cong H^2 ({\mathcal
O}_{L, S}; \, \Z_{l} (j + 1))
\label{ringofintegersDF1}\end{equation}
and the following exact sequence
\begin{equation}
0 \rightarrow H^2 ({\mathcal O}_{L, S}; \, \Z/l^k(j + 1))
\rightarrow K_{2j}^{et} ({\mathcal O}_{L, S};\, \Z/l^k) \rightarrow
H^0 ({\mathcal O}_{L, S}; \, \Z/l^k (j)) \rightarrow 0.
\label{finitefieldDF2}\end{equation}
Since ${x}_{w, k}^{\Theta_{m} ({\bf b}, {\bf f}_k)} \in
K_{2m} ({\mathcal O}_{F_{k}, S}),$ its image in
$K_{2m}^{et} ({\mathcal O}_{F_k, S};\, \Z/l^k)$ lies in fact in
in the \'etelae cohomology group $H^2 ({\mathcal O}_{F_k, S}; \, \Z/l^k(m + 1)).$
Hence, one can settle the case $n-m<0$ as well by using the projection formula
for the \'etale cohomology with negative twists.
\end{proof}

\begin{theorem}\label{Theorem 7.7}
For every $k \geq 0$, we have
$$\partial_{F} (\blambda_{v, l^k}) =
N(\xi_{w, k} \ast \beta_{k}^{\ast \, n-m})^{\Theta_{m} ({\bf b}, {\bf f})} \,\, ,$$
$$\partial_{F}^{et} (\blambda^{et}_{v, l^k}) =
N(\zeta_{w, k} \ast \beta_{k}^{\ast \, n-m})^{\Theta_{m} ({\bf b}, {\bf f})} \,\, .$$

\end{theorem}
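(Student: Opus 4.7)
I plan to combine four compatibilities: the transfer--boundary diagram \eqref{transfer-compatibility}, Gillet's product diagram \eqref{Gillet}, Lemma~\ref{Lemma 7.4}, and Lemma~\ref{Lemma 2.1}. Writing $E = F(\mu_{l^k})$, the first step is to use the $G(F/K)$-equivariance of $\partial_F$ to pull the operator $N{\bf b}^{n-m}\gamma_l$ outside, reducing the claim to the computation of $\partial_F(Tr_{E/F}(\Lambda_m(\xi_{w,k}) \ast \beta_k^{\ast \, n-m}))$. Diagram \eqref{transfer-compatibility} rewrites $\partial_F \circ Tr_{E/F}$ as $\bigoplus_u \bigoplus_{w'|u} N_{w'/u} \circ \partial_E$, and Gillet's diagram \eqref{Gillet} (together with the analogous boundary compatibility of the map $t(n-m)$ of \S4.1 when $n < m$) commutes $\partial_E$ past the Bott factor to produce $\partial_E(\Lambda_m(\xi_{w,k})) \ast \beta_k^{\ast \, n-m}$. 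Lemma~\ref{Lemma 7.4} then substitutes $\xi_{w,k}^{\Theta_m({\bf b},{\bf f}_k)}$ for $\partial_E\Lambda_m(\xi_{w,k})$, expressing $\partial_F(\blambda_{v,l^k})$ as the $N{\bf b}^{n-m}\gamma_l$-power of a sum of norms of Galois twists of $\xi_{w,k} \ast \beta_k^{\ast \, n-m}$.

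The heart of the argument is the Galois repackaging. Two facts combine here. First, for $\sigma_{\bf a} = ({\bf a},E) \in G(E/K)$ with ${\bf a}$ coprime to ${\bf f}_k l$, the cyclotomic twist on the Bott element gives $\sigma_{\bf a}(\beta_k) = N{\bf a}\cdot\beta_k$, so pulling $\sigma_{\bf a}^{-1}$ through $\beta_k^{\ast \, n-m}$ produces an extra scalar factor of $N{\bf a}^{n-m}$ in each term of the sum defining $\Theta_m({\bf b},{\bf f}_k)$. Second, the norm maps $N_{w'/u}$ are $G(E/K)$-equivariant and the total norm factors through $G(E/F)$-coinvariants, so the $E$-level Galois expression descends to a $G(F/K)$-level one via the restriction map. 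Combined with the Deligne--Ribet congruence \eqref{2.9} applied modulo $l^k$ --- which is legitimate since $\mu_{l^k}\subset K_{{\bf f}_k}$ forces $l^k \mid w_j(K_{{\bf f}_k})$ for every $j\geq 1$ --- this rewrites the sum as $N_{w/v}(\xi_{w,k} \ast \beta_k^{\ast \, n-m})^{\widetilde{\Theta}}$ for an explicit element $\widetilde\Theta\in\Z_l[G(F/K)]$ of the form ``Stickelberger at level ${\bf f}_k$ times a power of $N{\bf b}$'', restricted to $G(F/K)$.

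Finally, Lemma~\ref{Lemma 2.1} applied with ${\bf f}\mid{\bf f}_k$ --- whose distinct prime divisors are precisely those of ${\bf f}$ together with the primes of $\mathcal O_K$ above $l$ that are coprime to ${\bf f}$ --- descends the $F_{l^k}$-level Stickelberger to the $F$-level Stickelberger $\Theta_m({\bf b},{\bf f})$ by picking up the Euler factors $\gamma_l^{-1}$; the factor $N{\bf b}^{n-m}\gamma_l$ built into the definition of $\blambda_{v,l^k}$ is designed exactly to cancel these, leaving the asserted formula. The main obstacle will be the careful Galois and twist bookkeeping in the second paragraph, verifying that the Bott-element cyclotomic twist, the Deligne--Ribet congruence \eqref{2.9}, and the Euler factors from Lemma~\ref{Lemma 2.1} combine coherently modulo $l^k$. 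The \'etale version follows by the same argument, with Gillet's diagram replaced by the compatibility of the Dwyer--Friedlander spectral sequence with products (\cite{DF}, Proposition~5.4) and Soul\'e's analogous compatibility for the \'etale-cohomology localization sequence (\cite{So1}, p.~275).
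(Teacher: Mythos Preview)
Your plan is correct and follows essentially the same route as the paper: the transfer--boundary diagram \eqref{transfer-compatibility}, Gillet's diagram \eqref{Gillet} (with the $t(n-m)$ variant for $n<m$), the decomposition of $\Theta_m({\bf b},{\bf f}_k)$ along the primes $w^{\sigma_{\bf a}^{-1}}$, the cyclotomic twist $\sigma_{\bf a}(\beta_k)=N{\bf a}\cdot\beta_k$ combined with the Deligne--Ribet congruence \eqref{2.9}, and finally Lemma~\ref{Lemma 2.1} to pass from ${\bf f}_k$ to ${\bf f}$ via the $\gamma_l$ factor. One point to sharpen: the Deligne--Ribet step converts the exponent from $\Theta_m({\bf b},{\bf f}_k)$ to $\Theta_n({\bf b},{\bf f}_k)$ (the index changes from $m$ to $n$, absorbing the $N{\bf b}^{n-m}$), and then Lemma~\ref{Lemma 2.1} descends this to $\Theta_n({\bf b},{\bf f})$ (absorbing $\gamma_l$); the paper's proof in fact concludes with $\Theta_n({\bf b},{\bf f})$, not $\Theta_m({\bf b},{\bf f})$ as printed in the statement, so your final line should read $\Theta_n$ as well---this is consistent with the downstream use in Theorem~\ref{Theorem 7.10}.
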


\begin{proof}
The proof is similar to the proofs of
Theorem 1, pp. 336-340 of \cite{Ba1} and Proposition 2, pp. 221-222 of \cite{BG1}.
The diagram at the end of \S3 gives the following
commutative  diagram of $K$--groups with coefficients

$$\xymatrix{
K_{2n} ({\mathcal O}_{E, S} ;\, \Z/l^k ) \quad
\ar@<0.1ex>[d]^{Tr_{E/F}} \ar[r]^{\partial_{E}\quad } \quad &
\quad\
\bigoplus_{v \in S} \bigoplus_{w\, | \,v} K_{2n-1} (k_w ; \, \Z/l^k )
\ar@<0.1ex>[d]^{N}\\
K_{2n} ({\mathcal O}_{F, S}  ;\, \Z/l^k) \quad
\ar[r]^{\partial_{F}\quad } &
\quad \bigoplus_{v \in S} \, K_{2n-1} (k_v, ;\, \Z/l^k )}\,,
\label{diagram 2.6}$$
where $N := \bigoplus_{v} \bigoplus_{w\, | \,v} N_{w/v}.$
Hence we have $\partial_{F} \circ Tr_{E/F} = N \circ \partial_{E}.$
The compatibilities of some of the natural maps mentioned in \S3
which will be used next can be expressed via the following commutative diagrams,
explaining the action  of the groups $G(E/K)$ and $G(F/K)$ on the $K$--groups with coefficients
in the diagram above.
For $j > 0$ we use the following commutative diagram.
$$\xymatrix{
K_{2j} ({\mathcal O}_{E, S}; \, \Z/l^k ) \quad \ar@<0.1ex>[d]^{\sigma_{{\bf a}}^{-1}}
\ar[r]^{r_w} & \quad
K_{2j} (k_{w}; \, \Z/l^k)
\ar@<0.1ex>[d]^{\sigma_{{\bf a}}^{-1}}\\
K_{2j} ({\mathcal O}_{E, S}; \, \Z/l^k ) \quad
\ar[r]^{r_{w^{\sigma_{{\bf a}}^{-1}}}} & \quad
\, K_{2j} (k_{w^{\sigma_{{\bf a}}^{-1}}}; \, \Z/l^k)}
\label{diagram 7.7}$$
The above diagram gives the following equality:
\begin{equation}\label{reduction1}  r_{w^{\sigma_{{\bf a}}^{-1}}}(\beta_{k}^{\ast \, n-m}) =
r_{w^{\sigma_{{\bf a}}^{-1}}}((\beta_{k}^{\ast \, n-m})^{ N {\bf a}^{n-m}\sigma_{{\bf a}}^{-1}})
= (r_w (\beta_{k}^{\ast \, n-m}))^{ N {\bf a}^{n-m} \sigma_{{\bf a}}^{-1}}.\end{equation}
For any $j \in \Z$, we have the following commutative diagram:
$$\xymatrix{
H^0 ({\mathcal O}_{E, S}; \, \Z/l^k (j)) \quad \ar@<0.1ex>[d]^{\sigma_{{\bf a}}^{-1}}
\ar[r]^{r_w} & \quad
H^0 (k_{w}; \, \Z/l^k (j))
\ar@<0.1ex>[d]^{\sigma_{{\bf a}}^{-1}}\\
H^{0} ({\mathcal O}_{E, S}; \, \Z/l^k (j) ) \quad
\ar[r]^{r_{w^{\sigma_{{\bf a}}^{-1}}}} & \quad
\, H^0 (k_{w^{\sigma_{{\bf a}}^{-1}}}; \, \Z/l^k (j))}
\label{diagram 7.71}$$
If $\xi_{l^k} := exp (\frac{2 \, \pi \, i}{l^k})$ is the generator of $\mu_{l^k}$ then the
above diagram gives
\begin{equation}\label{reduction2}  r_{w^{\sigma_{{\bf a}}^{-1}}}(\xi_{l^k}^{\otimes \, n-m}) =
r_{w^{\sigma_{{\bf a}}^{-1}}}(\xi_{l^k}^{\otimes \, n-m})^{ N {\bf a}^{n-m}\sigma_{{\bf a}}^{-1}})
= (r_w (\xi_{l^k}^{\otimes \, n-m}))^{ N {\bf a}^{n-m} \sigma_{{\bf a}}^{-1}}.\end{equation}

We can write the $m$--th Stickelberger element as follows
\begin{equation}\label{theta-m} \Theta_{m} ({\bf b}, {\bf f}_k) =
{\sum_{{\bf a} \, {\rm{mod}} \, {\bf f}_k}}' \,\, \left (\sum_{{\bf c} \, {\rm{mod}} \, {\bf f}_k, \,
w^{\sigma_{{\bf c}^{-1}}} = w}\,
\Delta_{m+1} ({\bf a} {\bf c}, {\bf b}, {\bf f}) \sigma_{{\bf c}^{-1}}\right )\cdot \sigma_{{\bf a}^{-1}}\,,\end{equation}
where ${\sum'_{{\bf a} \, \rm{mod} \, {\bf f}_k}}$ denotes the sum over a maximal set $\mathcal S$
of ideal classes ${\bf a} \mod {\bf f}_k$,  such that the primes $w^{\sigma_{{\bf a}}^{-1}}$, for ${\bf a}\in\mathcal S$,  are
distinct. By formula \eqref{2.9}, for every $m \geq 1$ and $n \geq 1$ we have
$$ \Delta_{n+1} ({\bf a}, {\bf b}, {\bf f}) \equiv
N {\bf a}^{n- m} \, N {\bf b}^{n- m} \, \Delta_{m+1} ({\bf a} {\bf c}, {\bf b}, {\bf f})
\mod \,\, w_{\text{min} \, \{m, n \}}(K_{{\bf f}}).$$
It is clear that for all $m,n \geq 1$ we get the following
congruence $\mod
w_{\text{min} \, \{m, n \}}(K_{{\bf f}_k}).$

$$\Theta_{n} ({\bf b}, {\bf f}_k)\equiv$$
$${\sum_{{\bf a} \, {\rm{mod}} \, {\bf f}_k}}' \,\, (\sum_{{\bf c} \, {\rm{mod}} \, {\bf f}_k, \,
w^{\sigma_{{\bf c}^{-1}}} = w}\,
N {\bf a}^{n-m} \, N {\bf c}^{n-m}\, N {\bf b}^{n-m} \, \Delta_{m+1} ({\bf a} {\bf c}, {\bf b},
{\bf f}_k)
\sigma_{{\bf c}^{-1}}) \sigma_{{\bf a}^{-1}}$$

\noindent Equalities (\ref{reduction1}), (\ref{reduction2}), (\ref{theta-m}), Lemma
\ref{Lemma 7.2}, Gillet's result \cite{Gi} (diagram \eqref{Gillet}), the compatibility of
$t (n-m)$ and $t_v (n-m)$ with $\partial$ and
the above congruences satisfied by Stickelberger elements lead in both cases $n- m \geq 0$
and $n - m < 0$ to the following equalities.

\begin{eqnarray}\nonumber
\partial_{E} ( x_{w, k}^{\Theta_{m} ({\bf b}, {\bf f}_k)} \ast
\beta_{k}^{\ast \, n-m})^{N {\bf b}^{n-m}}= \\
 \nonumber
 ={\sum_{{\bf a} \, {\rm{mod}} \, {\bf f}_k}}'
\,\, \xi_{w, k}^{\sum_{{\bf c} \, {\rm{mod}} \, {\bf f}_k, \, w^{\sigma_{{\bf c}^{-1}}} = w} \,
\Delta_{m+1} ({\bf a} {\bf c}, {\bf b}, {\bf f}_k) \sigma_{({\bf a} {\bf c})^{-1}}}
\ast (\beta_{k}^{{\ast \, n-m}})^{(N {\bf a} {\bf c})^{n-m} N {\bf b}^{n-m}
\sigma_{(ac)^{-1}}}= \\
\nonumber
 =(\xi_{w, k}
\ast \beta_{k}^{{\ast \, n-m}})^{
\sum'_{{\bf a} \, {\rm{mod}} \, {\bf f}_k} \sum_{{\bf c} \, {\rm{mod}} \, {\bf f}_k, \,
w^{\sigma_{{\bf c}^{-1}}} = w} \, \Delta_{m+1} ({\bf a} {\bf c}, {\bf b}, {\bf f}_k)
(N {\bf a} {\bf c})^{n-m} N {\bf b}^{n-m} \sigma_{({\bf a} {\bf c})^{-1}}} = \\
\nonumber
=(\xi_{w, k} \ast \beta_{k}^{{\ast \, n-m}})^{\Theta_{n} ({\bf b}, {\bf f}_k)}.
\end{eqnarray}
By the first commutative diagram of this proof, the equalities above and
Lemma \ref{Lemma 2.1}, we obtain:
$$ \partial_{F} (\blambda_{v, l^k}) =
N (\partial_{E} ( x_{w, k}^{\Theta_{m} ({\bf b}, {\bf f}_k)} \ast
\beta_{k}^{\ast \, n-m})^{N{\bf b}^{n-m}})^{\gamma_l} =
N ((\xi_{w, k} \ast \beta_{k}^{{\ast \, n-m}})^{\Theta_{n} ({\bf b}, {\bf f}_k)})^{\gamma_l} = $$
$$ = (N (\xi_{w, k} \ast \beta_{k}^{{\ast \, n-m}}))^{{\gamma_l}^{-1}
\Theta_{n} ({\bf b}, {\bf f}) {\gamma_l}}
= (N (\xi_{w, k} \ast \beta_{k}^{{\ast \, n-m}}))^{\Theta_{n} ({\bf b}, {\bf f})}.$$
\end{proof}

\begin{theorem}\label{Theorem 7.8} For every $v$ such that
$l\mid q_{v}^n - 1$ and for all $k \geq k(v),$ there are homomorphisms
$$ \Lambda_{v, \, l^k} \, : \, K_{2n-1} (k_v ;\, \Z/l^k) \rightarrow
K_{2n} ({\mathcal O}_{F, S} ;\, \Z/l^k),$$
$$ \Lambda_{v, \, l^k}^{et} \, : \, K_{2n-1}^{et} (k_v ;\, \Z/l^k) \rightarrow
K_{2n}^{et} ({\mathcal O}_{F, S} ;\, \Z/l^k):$$
which satisfy the following equalities:
$$\Lambda_{v, \, l^k} \, (N(\xi_{w, k} \ast \beta_{k}^{\ast \, n-m}))
\,\, = \,\, \blambda_{v, l^k},$$
$$\Lambda_{v, \, l^k}^{et} \, (N(\zeta_{w, k} \ast \beta_{k}^{\ast \, n-m}))
\,\, = \,\, \blambda^{et}_{v, l^k}.$$

\end{theorem}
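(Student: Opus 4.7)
The plan is to construct $\Lambda_{v,l^k}$ by factoring an explicit composition through a natural surjection. Consider the map
\[
\phi: K_{2m-1}(k_w; \Z/l^k) \longrightarrow K_{2n-1}(k_v; \Z/l^k),\qquad \phi(\alpha):=N_{w/v}(\alpha \ast \beta_k^{\ast n-m}).
\]
Its surjectivity follows from the Dwyer-Friedlander Bott-multiplication isomorphism on finite fields combined with surjectivity of the transfer in finite-field $K$-theory (see \S3.2); the \'etale analogue $\phi^{et}$ is surjective for the same reason. By construction, $\phi(\xi_{w,k})=N_{w/v}(\xi_{w,k}\ast\beta_k^{\ast n-m})$ is precisely the element on which we wish to prescribe the value of $\Lambda_{v,l^k}$.

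Next, consider the auxiliary map
\[
F: K_{2m-1}(k_w;\Z/l^k)\longrightarrow K_{2n}(\mathcal{O}_{F,S};\Z/l^k),\qquad F(\alpha):=Tr_{E/F}\bigl(\Lambda_m(\alpha)\ast\beta_k^{\ast n-m}\bigr)^{N{\bf b}^{n-m}\gamma_l},
\]
where $\Lambda_m$ is extended to $\Z/l^k$-coefficients via the natural identification of $K_{2m-1}(k_w)_l/l^k$ with $K_{2m-1}(k_w;\Z/l^k)$ (which holds since $l^k\mid q_w-1$). By Definition \ref{special-coefficients}, $F(\xi_{w,k})=\blambda_{v,l^k}$. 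The key claim is that $F$ factors through $\phi$; once this is established, the factoring homomorphism $\Lambda_{v,l^k}$ satisfies $\Lambda_{v,l^k}(N_{w/v}(\xi_{w,k}\ast\beta_k^{\ast n-m}))=F(\xi_{w,k})=\blambda_{v,l^k}$, as required.

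By surjectivity of $\phi$ it suffices to verify $F|_{\ker\phi}=0$. By \S3.2, $\ker\phi$ is generated, via the Bott isomorphism, by elements of the form $\alpha^{\mathrm{Frob}}-\alpha$, where $\mathrm{Frob}$ generates $G(k_w/k_v)$. The vanishing $F(\alpha^{\mathrm{Frob}}-\alpha)=0$ follows by combining: the $G(E/F)$-equivariance of $\Lambda_m$, inherited from the $G(E/F)$-equivariance of the boundary map in the Quillen localization sequence used to define it; the Galois compatibility of Bott multiplication (Lemma \ref{Lemma 7.2}); Gillet's compatibility \eqref{Gillet}; and the identity $Tr_{E/F}\circ(\sigma-1)=0$ for $\sigma\in G(E/F)$. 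The exponent $N{\bf b}^{n-m}\gamma_l$ is exactly what is needed to absorb the discrepancy between $\Theta_m({\bf b},{\bf f}_k)$ and $\Theta_n({\bf b},{\bf f})$ via Lemma \ref{Lemma 2.1} and congruence \eqref{2.9}, paralleling the calculation already performed in the proof of Theorem \ref{Theorem 7.7}. The \'etale version $\Lambda_{v,l^k}^{et}$ is obtained identically, replacing Quillen $K$-theory by Dwyer-Friedlander \'etale $K$-theory and Gillet's diagram by the product-compatibility of the Dwyer-Friedlander spectral sequence noted at the beginning of \S4. The main obstacle is the vanishing $F|_{\ker\phi}=0$; it is precisely this step that eliminates the $l^{v_l(n)}$-factor present in the constructions of \cite{Ba1}.
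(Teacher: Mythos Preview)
Your overall strategy coincides with the paper's: both build $\Lambda_{v,l^k}$ as the composite of $\widetilde\Lambda_m$ (the $\Z/l^k$-version of $\Lambda_m$), Bott multiplication by $\beta_k^{\ast\,n-m}$, the transfer $Tr_{E/F}$, and the exponent $N{\bf b}^{n-m}\gamma_l$, and then show that this factors through $K_{2n-1}(k_v;\Z/l^k)$. The paper does the factoring by taking $K_{2n-1}(k_w;\Z/l^k)$ as the source, observing that the map $\widetilde\Lambda_m\ast\beta_k^{\ast\,n-m}$ is $G(k_w/k_v)$-equivariant, and then invoking $Tr_{E/F}\circ(\sigma-1)=0$; the norm $N_{w/v}$ is then an explicit isomorphism from the $G(k_w/k_v)$-coinvariants to $K_{2n-1}(k_v;\Z/l^k)$ because $k\ge k(v)$.

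There is, however, a genuine slip in your kernel computation. You take the source to be $K_{2m-1}(k_w;\Z/l^k)$, on which the Frobenius $Fr_v$ acts by $q_v^{\,m}$; hence $\alpha^{\mathrm{Frob}}-\alpha=(q_v^{\,m}-1)\alpha$. But since Bott multiplication is an isomorphism, $\ker\phi$ is the pull-back of $\ker N_{w/v}\subset K_{2n-1}(k_w;\Z/l^k)$, namely $(q_v^{\,n}-1)\,K_{2m-1}(k_w;\Z/l^k)$. These two subgroups differ whenever $v_l(q_v^{\,n}-1)\ne v_l(q_v^{\,m}-1)$ (e.g.\ when $l\mid q_v-1$ and $v_l(n)\ne v_l(m)$), so your description of $\ker\phi$ is not correct in general, and the asserted vanishing $F(\alpha^{\mathrm{Frob}}-\alpha)=0$ does not follow from ``equivariance of $\Lambda_m$ plus $Tr_{E/F}\circ(\sigma-1)=0$'' alone: Bott multiplication is not $Fr_v$-invariant, it twists by $q_v^{\,n-m}$. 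The correct identity is
\[
(Fr_v-1)\bigl(\Lambda_m(\alpha)\ast\beta_k^{\ast\,n-m}\bigr)=(q_v^{\,n}-1)\,\Lambda_m(\alpha)\ast\beta_k^{\ast\,n-m},
\]
using $Fr_v\Lambda_m(\alpha)=q_v^{\,m}\Lambda_m(\alpha)$ (equivariance of $\Lambda_m$) together with $Fr_v(\beta_k^{\ast\,n-m})=q_v^{\,n-m}\beta_k^{\ast\,n-m}$; this gives $F\bigl((q_v^{\,n}-1)\alpha\bigr)=0$, which is exactly what is needed. This is the content of the paper's ``observation'' that the composite factors through $G(k_w/k_v)$-coinvariants.

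Finally, your remark that the exponent $N{\bf b}^{n-m}\gamma_l$ is ``exactly what is needed to absorb the discrepancy between $\Theta_m$ and $\Theta_n$'' belongs to Theorem~\ref{Theorem 7.7}, not here: for Theorem~\ref{Theorem 7.8} this exponent is simply part of the definition of $\blambda_{v,l^k}$ and plays no role in the factoring step.
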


\begin{proof}
The definition of $\Lambda_{m}$ (see (\ref{firststepsplitting})),
combined with the natural
isomorphism $K_{2m-1}(k_w) / l^k \cong K_{2m-1}(k_w; \Z/l^k\Z)$ and the natural
monomorphism
$$K_{2m}({\mathcal O}_{E, S}) / l^k \to K_{2m}({\mathcal O}_{E, S}; \Z/l^k\Z),$$
coming from the corresponding Bockstein exact sequences, leads to the following homomorphism:
$$\widetilde \Lambda_{m} : K_{2m-1}(k_w; \Z/l^k\Z) \to  K_{2m}({\mathcal O}_{E,S}; \Z/l^k\Z).$$
Multiplying on the target and on the source of this homomorphism with the $n-m$ power
of the Bott element if $n-m \geq 0$ (resp. applying the map $t_w (n-m)$ to the source and
$t (n-m)$ to the target if $n -m < 0$) under the observation that the following map is an
isomorphism:
$$\xymatrix
{K_{2m-1}(k_w; \Z/l^k\Z)\ar[r]_{\sim}^{{\ast\beta_k^{\ast n-m}}} &K_{2n-1}(k_w; \Z/l^k\Z)}$$
(cf. the notation of $t (j)$ and $t_w (j)$ )
show that there exists a unique homomorphism
$$\widetilde \Lambda_{m} \ast \beta_{k}^{\ast \, n-m}\, :\,K_{2n-1} (k_w ;\, \Z/l^k) \rightarrow
K_{2n} ({\mathcal O}_{E, S} ;\, \Z/l^k),$$
sending  $ \xi_{w, k} \ast \beta_{k}^{\ast \, n-m} \rightarrow
x_{w, k}^{\Theta_{m} ({\bf b}, {\bf f}_{k})} \ast \beta_{k}^{\ast \, n-m}$.
Next, we compose the homomorphisms $\widetilde \Lambda_{m} \ast \beta_{k}^{\ast \, n-m}$
defined above and
$$Tr_{E/F}\, : \, K_{2n} ({\mathcal O}_{E, S} ;\, \Z/l^k) \rightarrow
K_{2n} ({\mathcal O}_{F, S} ;\, \Z/l^k)$$
to obtain the following homomorphism:
$$ Tr_{E/F}\circ (\widetilde \Lambda_{m} \ast \beta_{k}^{\ast \, n-m}):
K_{2n-1} (k_w ;\, \Z/l^k) \rightarrow
K_{2n} ({\mathcal O}_{F, S} ;\, \Z/l^k)\,.$$
We observe that this homomorphism factors through the quotient of $G(k_w/k_v)$--coinvariants
$$K_{2n-1} (k_w ;\, \Z/l^k)_{G(k_{w} / k_{v})}:=
K_{2n-1} (k_w ;\, \Z/l^k) / K_{2n-1} (k_w ;\, \Z/l^k)^{Fr_v - Id},$$
where $Fr_{v} \in G(k_{w} / k_{v})\subseteq G(E/F)$ is the Frobenius element
of the prime $w$ over $v.$ Since $Fr_v$ acts via $q_v^n$--powers on $K_{2n-1}(k_w)$, the canonical isomorphism
$K_{2n-1} (k_w ;\, \Z/l^k)\cong K_{2n-1}(k_w)/l^k$ (see \S3) and assumption $k\geq k(v)$ give
$$K_{2n-1} (k_w ;\, \Z/l^k)_{G(k_{w} / k_{v})} \cong K_{2n-1} (k_w ;\, \Z/l^k) / l^{k(v)}\cong
K_{2n-1} (k_w)/l^{k(v)}.$$
The obvious commutative diagram with surjective vertical morphisms (see \S 3)
$$\xymatrix{
K_{2n-1} (k_w)/l^k \quad \ar@<0.1ex>[d]^{N_{w /v}}
\ar[r]^{\cong}  \quad &
\quad K_{2n-1} (k_w ;\, \Z/l^k)
\ar@<0.1ex>[d]^{N_{w /v}}\\
K_{2n-1} (k_v)/l^k \quad
\ar[r]^{\cong} &
\quad \, K_{2n-1} (k_v ;\, \Z/l^k)}
\label{diagram 2.5}$$
combined with the last isomorphism above, gives an isomorphism
$$\xymatrix{K_{2n-1} (k_w ;\, \Z/l^k)_{G(k_{w} / k_{v})}
\ar[r]^{\quad N_{w /v}}_{\quad\sim} &
K_{2n-1} (k_v ;\, \Z/l^k)}$$
Now, the required homomorphism is:
\begin{equation}
\xymatrix{\Lambda_{v, \, l^k}
\, : \, K_{2n-1} (k_v ; \, \Z/l^k) \ar[r] & K_{2n} ({{\mathcal O}_{F, S}} ; \, \Z/l^k) }
\label{tag 4.6}
\end{equation}
defined by
$$\Lambda_{v, \, l^k} (x):=
[Tr_{E/F}\circ (\widetilde \Lambda_{m} \ast \beta_{k}^{\ast \, n-m})\circ {N_{w /v}}^{-1}(x)]^{N{\bf b}^{n-m} \gamma_l}\,,$$
for all $x\in K_{2n-1} (k_v ; \, \Z/l^k)$.
By definition, this map sends $N(\xi_{w, k} \ast \beta_{k}^{\ast \, n-m})$ onto the element
$\blambda_{v, l^k}:=
Tr_{E/F}(x_{w, k}^{\Theta_{m} ({\bf b}, {\bf f}_{k})}
\ast \beta_{k}^{\ast \, n-m})^{N{\bf b}^{n-m} \gamma_l}$.
\end{proof}
\bigskip

\subsection{Constructing $\Lambda$ and $\Lambda^{et}$ for $K$--theory without coefficients}  Let us fix $n>0$. In this section,
we use the special elements and $\blambda_{v, l^k}$ and $\blambda^{et}_{v, l^k}$ defined above
to construct the maps $\Lambda_n$ and $\Lambda^{et}_n$ for the $K$--theory (respectively
\'etale $K$--theory) without coefficients. Since $n$ is fixed throughout, we will denote $\Lambda:=\Lambda_n$ and
$\Lambda^{et}:=\Lambda^{et}_n$.
\medskip

\noindent Observe that for every $j > 0$ and every prime $l$,
the Bockstein exact sequence \eqref{Bokstein} and results of Quillen
\cite{Q2}, \cite{Q3} give
natural isomorphisms
\begin{equation}
K_{j} ({\mathcal O}_{F, S})_l \,\, \cong \,\,
\varprojlim_{k} K_{j} ({\mathcal O}_{F, S} ;\, \Z/l^k),
\label{invlimringofintegers}\end{equation}
\begin{equation}
K_{j} (k_v)_l \,\, \cong \,\,
\varprojlim_{k} K_{j} (k_v ;\, \Z/l^k)\,.
\label{invlimfinitefields}\end{equation}
Similar isomorphisms hold for the \' etale $K$-theory.

\begin{definition} We define $\blambda_v
\in K_{2n} ({\mathcal O}_{F, S})_l$
and $\blambda_v^{et}
\in K_{2n}^{et} ({\mathcal O}_{F, S})$
to be the elements corresponding to
$$(\blambda_{v, l^k})_{k} \in
\varprojlim_{k} K_{2n} ({\mathcal O}_{F, S} ;\, \Z/l^k), \qquad (\blambda^{et}_{v, l^k})_{k} \in
\varprojlim_{k} K_{2n}^{et} ({\mathcal O}_{F, S} ;\, \Z/l^k)$$
via the isomorphism (\ref{invlimringofintegers})
and its \'etale analogue, respectively.\end{definition}

\begin{definition} We define $\xi_v \in K_{2n-1} (k_v)_l$ and
$\zeta_v \in K_{2n-1}^{et} (k_v)$ to be the elements corresponding to
$$(N(\xi_{w, k} \ast \beta_{k}^{\ast \, n-m}))_k \in
\varprojlim_{k} K_{2n-1} (k_v ;\, \Z/l^k), $$
$$(N(\zeta_{w, k} \ast \beta_{k}^{\ast \, n-m}))_k \in
\varprojlim_{k} K_{2n-1}^{et} (k_v ;\, \Z/l^k),$$
via the isomorphism (\ref{invlimfinitefields}) and its \'etale analogue, respectively.
\end{definition}
\begin{definition} Assume that $l\mid q_v^n-1$. Since the homomorphisms $\Lambda_{v, \, l^k},$ and $\Lambda^{et}_{v, \, l^k},$
from Theorem \ref{Theorem 7.8}, are compatible with
the coefficient reduction maps $r_{k^{\prime}/k}$, for all \, $k'\geq k\geq k(v),$ \,
we can define homomorphisms
$$ \Lambda_{v} := \varprojlim_{k} \Lambda_{v, \, l^k}:
K_{2n-1} (k_v)_l \rightarrow K_{2n} ({\mathcal O}_{F, S})_l\hookrightarrow  K_{2n} (F)_l,
$$
$$ \Lambda_{v}^{et} := \varprojlim_{k} \Lambda^{et}_{v, \, l^k}:
K_{2n-1}^{et} (k_v) \rightarrow K_{2n}^{et} ({\mathcal O}_{F, S})\hookrightarrow  K^{et}_{2n} (F)_l,
$$
for all $v.$ Here, the rightmost arrows are the inclusions $K_{2n}({\mathcal O}_{F, S}) \subset K_{2n} (F)$
and $K_{2n}^{et}({\mathcal O}_{F, S}) \subset K_{2n}^{et} (F)_l$, respectively.
If $l\nmid q_v^n-1$, then the morphisms $\Lambda_{v}$ and $\Lambda_{v}^{et}$ are trivial, by default.
\end{definition}

 \begin{remark}\label{xi-lambda} It is clear from
Theorem \ref{Theorem 7.8} that, for all $v$, we have
$$\Lambda_{v} (\xi_v) = \blambda_v, \qquad \Lambda_{v}^{et} (\zeta_v) = \blambda_v^{et}.$$
\end{remark}

\begin{definition}\label{Definition 7.9}
We define the maps $\Lambda_n$ and $\Lambda_n^{et}$ as follows:
$$\Lambda \, : \, \bigoplus_{v} K_{2n-1} (k_v)_l
\, \rightarrow K_{2n} (F)_l, \qquad \Lambda \, := \, \prod_{v} \Lambda_{v}.$$
$$\Lambda^{et} \, : \, \bigoplus_{v} K_{2n-1}^{et} (k_v)
\, \rightarrow K_{2n}^{et} (F)_l, \qquad
\Lambda^{et} \, := \, \prod_{v} \Lambda_{v}^{et}.$$
\end{definition}

\begin{theorem}\label{Theorem 7.10}
The maps $\Lambda$ and $\Lambda^{et}$ satisfy the following properties:
$$\partial_{F} \circ \Lambda (\xi_{v}) =
\xi_{v}^{\Theta_{n} ({\bf b}, {\bf f})},
$$
$$\partial_{F}^{et} \circ \Lambda^{et} (\zeta_{v}) =
\zeta_{v}^{\Theta_{n} ({\bf b}, {\bf f})}.
$$
\end{theorem}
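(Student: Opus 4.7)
The plan is to reduce Theorem \ref{Theorem 7.10} to Theorem \ref{Theorem 7.7} by passing to the projective limit in $k$. First, I would unwind the definitions: by Definition \ref{Definition 7.9}, $\Lambda(\xi_v) = \Lambda_v(\xi_v)$ (since $\Lambda$ is the product of the $\Lambda_{v}$ and each $\xi_v$ lives in the $v$--component of the direct sum), and by Remark \ref{xi-lambda} this equals $\blambda_v$. The target equality thus becomes
$$\partial_F(\blambda_v) = \xi_v^{\Theta_n({\bf b}, {\bf f})},$$
viewed in $\bigoplus_{v'} K_{2n-1}(k_{v'})_l$ with $\xi_v^{\Theta_n({\bf b}, {\bf f})}$ placed in the $v$--component.

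Second, I would exploit the isomorphisms \eqref{invlimringofintegers} and \eqref{invlimfinitefields}. By construction, $\blambda_v$ corresponds to the compatible system $(\blambda_{v,l^k})_k$ (the compatibility being precisely Lemma \ref{Lemma 7.6}), and $\xi_v$ corresponds to $(N(\xi_{w,k}\ast \beta_k^{\ast\, n-m}))_k$ (compatibility being Lemma \ref{Lemma 7.5}). Since the Quillen localization boundary $\partial_F$ commutes with the coefficient reduction maps $r_{k'/k}$ (this is built into the Bockstein formalism of \S3.1), it induces on projective limits the map $\partial_F : K_{2n}(F)_l \to \bigoplus_v K_{2n-1}(k_v)_l$, and $\partial_F(\blambda_v)$ corresponds to the system $(\partial_F(\blambda_{v,l^k}))_k$.

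Third, Theorem \ref{Theorem 7.7} gives, at every finite level $k$, the identity
$$\partial_F(\blambda_{v,l^k}) = N(\xi_{w,k}\ast\beta_k^{\ast\, n-m})^{\Theta_n({\bf b},{\bf f})}.$$
Since raising to the integral group ring element $\Theta_n({\bf b},{\bf f}) \in \Z[G(F/K)]$ commutes with the reduction maps $r_{k'/k}$, the right-hand sides assemble into the system $(\xi_v^{\Theta_n({\bf b},{\bf f})})$ in the inverse limit. Combining the previous step yields $\partial_F(\blambda_v) = \xi_v^{\Theta_n({\bf b},{\bf f})}$, as desired. The proof in the \'etale setting is word-for-word the same, using the \'etale analogues of \eqref{invlimringofintegers}--\eqref{invlimfinitefields} and the \'etale parts of Theorems \ref{Theorem 7.7} and \ref{Theorem 7.8}.

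The only potentially delicate point is the compatibility of $\partial_F$ (and of the $\Theta_n({\bf b},{\bf f})$--action) with the projective limits, but both are formal: $\partial_F$ arises from the long exact sequence of a fixed localization and is therefore natural in the coefficient system, while the $G(F/K)$--action is induced by ring automorphisms and commutes with every structural map in sight. Thus no genuine obstacle arises; the theorem is essentially a repackaging of Theorem \ref{Theorem 7.7} in the $\varprojlim_k$ framework set up in Definitions \ref{Definition 7.9} and the ones preceding it.
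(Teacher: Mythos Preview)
Your proposal is correct and follows essentially the same approach as the paper: the paper's proof also reduces to Theorem \ref{Theorem 7.7} (and Theorem \ref{Theorem 7.8}) by passing to the projective limit over $k$, using a commutative diagram coming from the Bockstein sequence to identify $\varprojlim_k \partial_F$ on $K$-theory with coefficients with the usual boundary $\partial_F$ on $K_{2n}({\mathcal O}_{F,S})_l$. Your version is somewhat more explicit about unwinding the definitions via Remark \ref{xi-lambda}, but the underlying argument is the same.
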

\begin{proof}
Consider the following commutative diagram.
$$\xymatrix{
K_{2n} ({\mathcal O}_{F, S})/l^k \quad \ar@<0.1ex>[d]^{}
\ar[r]^{\bigoplus_{v \in S} \, \partial_{v}} \quad &
\quad\quad
\bigoplus_{v \in S}  K_{2n-1} (k_v)/l^k
\ar@<0.1ex>[d]^{}\\
K_{2n} ({\mathcal O}_{F, S} ;\, \Z/l^k) \quad
\ar[r]^{\bigoplus_{v \in S} \, \partial_{v}} &
\quad\quad\quad \bigoplus_{v \in S} \, K_{2n-1} (k_v ;\, \Z/l^k)}
\label{diagram 5.20}$$
The vertical arrows in the diagram come from the Bockstein
exact sequence. It is clear from the diagram that the inverse limit
over $k$ of the bottom horizontal arrow
gives the boundary map $\partial_{F} =
\bigoplus_{v \in S} \partial_{v}:$
$$
\partial_{F} \, :\, K_{2n} ({\mathcal O}_{F, S})_l  \rightarrow
\bigoplus_{v}  K_{2n-1} (k_v)_l.
$$
Now, the theorem follows by Theorems \ref{Theorem 7.7} and
\ref{Theorem 7.8}.
\end{proof}

\noindent
In the next proposition we will construct a Stickelberger splitting map $\Gamma$
which is complementary to the map $\Lambda$ constructed above.
\begin{equation}\label{exact-sequence} 0 \stackrel{}{\longrightarrow} K_{2n} ({\mathcal O}_{F})_l
{{\stackrel{i}{\longrightarrow}} \atop {\stackrel{\Gamma}{\longleftarrow}}}
K_{2n} (F)_l
{{\stackrel{\partial_F}{\longrightarrow}} \atop {\stackrel{\Lambda}{\longleftarrow}}}
\bigoplus_{v} K_{2n-1} (k_v)_l
\stackrel{}{\longrightarrow} 0.\end{equation}
 The existence of $\Gamma$ was suggested in 1988 by Christophe Soul\'e in a letter to the
first author and it is a direct consequence of the following
module theoretic lemma.

\begin{lemma}\label{modules-1} Let $R$ be a commutative ring with $1$ and let $r\in R$ be fixed. Let
$$\xymatrix{0\ar[r] &A\ar[r]^{\iota} &B\ar[r]^\pi &C\ar[r] &0}
$$
be an exact sequence of $R$--modules. Then, the following are
equivalent:
\begin{enumerate}
\item There exists an $R$--module morphism $\Lambda: C\to B$,
    such that $\pi\circ\Lambda=r\cdot{id}_C$.
\item There exists an $R$--module morphism $\Gamma: B\to A$,
    such that $\Gamma\circ\iota=r\cdot{id}_A$.
\end{enumerate}
Moreover, if $\Lambda$ and $\Gamma$ exist, they can be chosen so
that $\Gamma\circ\Lambda=0$.
\end{lemma}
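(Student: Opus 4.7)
The plan is to prove the two implications by a direct diagram chase, exploiting the fact that $\iota$ is injective (so $\iota$ identifies $A$ with $\ker\pi\subseteq B$), and then to verify the compatibility $\Gamma\circ\Lambda=0$ by a short computation.

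For (1) $\Rightarrow$ (2), suppose $\Lambda:C\to B$ satisfies $\pi\circ\Lambda=r\cdot\mathrm{id}_C$. Given $b\in B$, I would first observe that
$$\pi(r\cdot b-\Lambda(\pi(b)))=r\cdot\pi(b)-r\cdot\pi(b)=0,$$
so $r\cdot b-\Lambda(\pi(b))\in\ker\pi=\iota(A)$. Since $\iota$ is injective, there is a unique element of $A$ mapping to this, and I would define $\Gamma:B\to A$ by
$$\Gamma(b):=\iota^{-1}\bigl(r\cdot b-\Lambda(\pi(b))\bigr).$$
This is $R$-linear since $\iota$, $\pi$, $\Lambda$, and multiplication by $r$ are. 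Plugging in $b=\iota(a)$ yields $\Gamma(\iota(a))=\iota^{-1}(r\iota(a))=r\cdot a$, as required.

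For (2) $\Rightarrow$ (1), suppose $\Gamma:B\to A$ satisfies $\Gamma\circ\iota=r\cdot\mathrm{id}_A$. For $c\in C$ I would pick any lift $b\in\pi^{-1}(c)$ and set
$$\Lambda(c):=r\cdot b-\iota(\Gamma(b)).$$
The only thing to check carefully is well-definedness. If $b'=b+\iota(a)$ is another lift, then
$$r\cdot b'-\iota(\Gamma(b'))=r\cdot b+r\cdot\iota(a)-\iota(\Gamma(b))-\iota(\Gamma(\iota(a)))=r\cdot b-\iota(\Gamma(b)),$$
using $\Gamma(\iota(a))=r\cdot a$. This makes $\Lambda$ a well-defined $R$-module map, and $\pi(\Lambda(c))=r\cdot\pi(b)-0=r\cdot c$.

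For the final claim that $\Lambda$ and $\Gamma$ may be chosen with $\Gamma\circ\Lambda=0$: if $\Gamma$ is built from $\Lambda$ by the formula in the first paragraph, then for any $c\in C$, taking $b:=\Lambda(c)$ as a lift of $rc=\pi(\Lambda(c))$ and feeding it into the definition of $\Gamma$ gives
$$\Gamma(\Lambda(c))=\iota^{-1}\bigl(r\cdot\Lambda(c)-\Lambda(r\cdot c)\bigr)=\iota^{-1}(0)=0.$$
Symmetrically, $\Lambda$ built from $\Gamma$ satisfies the same identity by an analogous computation. There is no real obstacle here; the argument is a formal diagram chase, and the only subtle point is the well-definedness check in the second implication, which is the step I would be most careful about.
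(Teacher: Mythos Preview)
Your proof is correct and follows essentially the same approach as the paper: both directions use the identical formulas $\Gamma(b)=\iota^{-1}(rb-\Lambda(\pi(b)))$ and $\Lambda(c)=rb-\iota(\Gamma(b))$, and the verification of $\Gamma\circ\Lambda=0$ is the same computation. The only cosmetic difference is that your phrasing ``taking $b:=\Lambda(c)$ as a lift of $rc$'' is slightly misleading, since $\Gamma$ is already defined on all of $B$ and no choice of lift is involved at that point; but the actual calculation you wrote is fine.
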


\begin{proof}[Proof of Lemma] Assume that (1) holds. By the defining property
of $\Lambda$, we have
\begin{equation}\label{Lambda-property}(\Lambda\circ\pi)(-b)+rb\in {\rm Im}(\iota), \qquad \forall\, b\in B.
\end{equation}
We define $\Gamma(b):=\iota^{-1}((\Lambda\circ\pi)(-b)+rb)$, for
all $b\in B$, where $\iota^{-1}(x)$ is the preimage of $x$ via
$\iota$, for all $x\in{\rm Im}(\iota)$. One can check without
difficulty that $\Gamma$ is an $R$--module morphism which
satisfies
$$\Gamma\circ\iota = r\cdot id_A, \qquad \Gamma\circ\Lambda =0.$$

Now, assume that (2) holds. Let $c\in C$. Take $b\in B$, such that
$\pi(b)=c$. Then, by the defining property of $\Gamma$, one can
check that the element $(\iota\circ\Gamma)(-b)+rb\in B$ is
independent on the chosen $b$. For all $c\in C$, we define
$\Lambda(c):= (\iota\circ\Gamma)(-b)+rb$, where $b\in B$, such
that $\pi(b)=c$. It is easily seen that the map $\Lambda$ defined
this way is an $R$--module morphism and it satisfies
$$\pi\circ\Lambda=r\cdot id_C, \qquad \Gamma\circ\Lambda=0.$$
\end{proof}

\begin{proposition}\label{Proposition 7.11} The existence of a map
$\Lambda $ satisfying the property
$(\partial_{F} \circ \Lambda )(\xi_{v}) =
\xi_{v}^{\Theta_{n} ({\bf b}, {\bf f})}
$
is equivalent to the existence of a map
$\Gamma \, : \, K_{2n} (F)_l \rightarrow K_{2n} ({\mathcal O}_{F})_l$
with the property
$(\Gamma \circ i) (\eta) =
\eta^{\Theta_{n} ({\bf b}, {\bf f})}.$ Moreover, if they exist, the maps $\Lambda$ and $\Gamma$ can be chosen so that $\Gamma \circ \Lambda = 0.$
\end{proposition}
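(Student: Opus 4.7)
The plan is to derive Proposition \ref{Proposition 7.11} as a direct application of the module-theoretic Lemma \ref{modules-1} to the localization exact sequence \eqref{exact-sequence}. The key data are: take $R:=\Z_l[G(F/K)]$, $r:=\Theta_{n}({\bf b},{\bf f})$ (which lies in $\Z[G(F/K)]\subseteq R$ by the Deligne--Ribet theorem, since ${\bf b}$ is coprime to $w_{n+1}(F)$), and read \eqref{exact-sequence} as the exact sequence
$$
0\longrightarrow A\stackrel{\iota}{\longrightarrow} B\stackrel{\pi}{\longrightarrow} C\longrightarrow 0
$$
of $R$--modules with $A=K_{2n}({\mathcal O}_F)_l$, $B=K_{2n}(F)_l$, $C=\bigoplus_v K_{2n-1}(k_v)_l$, $\iota=i$ and $\pi=\partial_F$. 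The $R$--module structure is present because $i$ and $\partial_F$ are Galois--equivariant; this is the content of the basic functoriality of Quillen's localization sequence.

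The first step is to reformulate the two hypotheses of the proposition as hypotheses on the whole modules. The condition $(\partial_F\circ\Lambda)(\xi_v)=\xi_v^{\Theta_n({\bf b},{\bf f})}$ is stated only on the distinguished generators $\xi_v$, but since $K_{2n-1}(k_v)_l$ is a cyclic $\Z_l$-module generated by $\xi_v$ (by Quillen's computation of the $K$-theory of finite fields) and both $\partial_F$ and $\Lambda$ are $\Z_l$-linear, this condition is equivalent to the global identity $\partial_F\circ\Lambda=\Theta_{n}({\bf b},{\bf f})\cdot\mathrm{id}_C$. On the other hand the condition $(\Gamma\circ i)(\eta)=\eta^{\Theta_n({\bf b},{\bf f})}$ is, by construction, literally the identity $\Gamma\circ\iota=\Theta_{n}({\bf b},{\bf f})\cdot\mathrm{id}_A$. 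Hence the two assertions we must prove equivalent are exactly conditions (1) and (2) of Lemma \ref{modules-1}.

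The second step is simply to invoke Lemma \ref{modules-1}, which gives the equivalence of (1) and (2) for any such exact sequence and any $r\in R$. The concluding sentence about $\Gamma\circ\Lambda=0$ is nothing more than the last clause of that same lemma, which gives the explicit recipe $\Gamma(b)=\iota^{-1}((\Lambda\circ\pi)(-b)+rb)$ (or symmetrically $\Lambda$ in terms of $\Gamma$) producing compatible $\Lambda$ and $\Gamma$ with vanishing composition.

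There is essentially no obstacle: the proposition is a clean corollary of the preceding lemma, once one verifies $R$--linearity of $\Lambda$, which is immediate from the Galois--equivariance built into the construction in Definition \ref{Definition 7.9} (the transfer $Tr_{E/F}$, the norms $N_{w/v}$, the Bott elements and the lifts $x_{w,k}$ are all natural under the $G(F/K)$--action). The only point worth mentioning explicitly in the write-up is the passage from the pointwise identity on the $\xi_v$ to the global identity on $C$, so that Lemma \ref{modules-1} can be applied verbatim.
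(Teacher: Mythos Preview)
Your approach is essentially identical to the paper's: both derive the proposition by applying Lemma \ref{modules-1} directly to the Quillen localization sequence \eqref{exact-sequence} with $r=\Theta_n({\bf b},{\bf f})$. The paper's one-line proof takes $R=\Z[G(F/K)]$ rather than your $\Z_l[G(F/K)]$, but this is immaterial since the modules are $l$--primary; your added remark on passing from the pointwise identity on the generators $\xi_v$ to the global identity $\partial_F\circ\Lambda=r\cdot\mathrm{id}_C$ is a helpful clarification that the paper leaves implicit.
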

\begin{proof}
The proof of the Proposition follows directly from the above Lemma
applied to $R:=\Bbb Z[G(F/K)]$, $r:=\Theta_{n} ({\bf b}, {\bf f})$
and Quillen's localization exact sequence \eqref{exact-sequence}.
\end{proof}

\begin{remark}
From the proof of Lemma \ref{modules-1} it is clear that if one of
the maps $\Lambda$ and $\Gamma$ is given, then the other one can
be chosen such that $$r\cdot id_B=\Lambda\circ\pi
+i\circ\Gamma.$$\end{remark}

\begin{remark}\label{Remark 7.12}
Observe that the map $\Lambda$ is defined in the same way for both cases
$l \nmid n$ and $l \mid n.$ If restricted to the particular case
$K=\Bbb Q$, our construction improves upon that of \cite{Ba1}. In loc.cit., in the case $l\mid n$
the map $\Lambda$ was constructed only up to a factor of $l^{v_l(n)}$.
\end{remark}
\medskip

 Analogously,
there is a Stickelberger splitting map $\Gamma^{et}$ which is
complementary to the map $\Lambda^{et}$ such that the \' etale
analogue of the Proposition \ref{Proposition 7.11} holds.
\medskip

\subsection{Annihilating $div\, K_{2n}(F)_l$}
Now, let us give a set of immediate applications of our
construction of the Stickelberger splitting maps $\Lambda_n$. In what follows, if $A$ ia an abelian group, $div\,A$ denotes its subgroup
of divisible elements.
The applications which follow concern annihilation of the groups $div\,
K_{2n}(F)_l$ by higher Stickelberger elements of the type proved
in \cite{Ba1} in the case where the base field is $\Bbb Q$. The difference is that while \cite{Ba1} deals with
abelian extensions $F/\Bbb Q$, under certain restrictions if
$l\mid n$, we deal with abelian extensions $F/K$ of an arbitrary
totally real number field $K$ under no restrictive conditions. The desired annihilation result follows from the following.

\begin{lemma}\label{modules-2} With notations as in Lemma \ref{modules-1}, assume that a map $\Lambda$ exists.
Further, assume that $C$ and $A$ viewed as abelian groups satisfy $div\,C=0$ and $A$ has finite exponent.
Then, we have the following.
\begin{enumerate}
\item $div\,B\subseteq {\rm Im}(\iota)$.
\item $r$ annihilates $div\,B$.
\end{enumerate}
\end{lemma}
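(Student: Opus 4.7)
The plan is to exploit the fact that abelian group homomorphisms carry divisible elements to divisible elements, combined with the constraints on $C$ and $A$. First I would observe the elementary fact that an element of finite order in a divisible abelian group context behaves well: since $A$ has finite exponent, any divisible subgroup of $A$ must be trivial, i.e. $\mathrm{div}\, A = 0$.

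For part (1), I would take $b \in \mathrm{div}\, B$ and apply $\pi$. Since $\pi$ is a group homomorphism and $b$ is divisible in $B$, the image $\pi(b)$ is divisible in $C$. By hypothesis $\mathrm{div}\, C = 0$, hence $\pi(b) = 0$, which by exactness gives $b \in \mathrm{Im}(\iota)$.

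For part (2), I would invoke Lemma \ref{modules-1} to obtain a complementary morphism $\Gamma : B \to A$ satisfying $\Gamma \circ \iota = r \cdot \mathrm{id}_A$. Given $b \in \mathrm{div}\, B$, the element $\Gamma(b)$ lies in $\mathrm{div}\, A$ (again because homomorphisms preserve divisibility), and by the opening observation $\Gamma(b) = 0$. Using (1), write $b = \iota(a)$ for some $a \in A$; then $0 = \Gamma(b) = \Gamma(\iota(a)) = r \cdot a$, so that
\[
r \cdot b \;=\; r \cdot \iota(a) \;=\; \iota(r \cdot a) \;=\; 0.
\]

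There is no real obstacle here; the argument is purely formal once Lemma \ref{modules-1} is in hand. The only subtlety worth flagging explicitly in the write-up is the assertion that $\mathrm{div}\, A = 0$ when $A$ has finite exponent (if $NA = 0$ and $a \in \mathrm{div}\, A$, then $a = N a'$ for some $a' \in \mathrm{div}\, A$, hence $a = 0$), since this is the one place the finite-exponent hypothesis is used.
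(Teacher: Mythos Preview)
Your proof is correct and follows essentially the same line as the paper's. Part~(1) is identical; for part~(2) the paper works directly with the relation $(\Lambda\circ\pi)(-b)+rb\in\mathrm{Im}(\iota)$ (multiplying it by the exponent of $A$ to kill the right-hand side), whereas you route through the map $\Gamma$ of Lemma~\ref{modules-1}---but since $\Gamma(b)=\iota^{-1}\bigl((\Lambda\circ\pi)(-b)+rb\bigr)$ by construction, this is the same computation in slightly different packaging.
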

\begin{proof}
Since any morphism maps divisible elements to divisible elements, we have $\pi(div\, B)=0$, by our assumption on $C$. 
This concludes the proof of (1).

Let $x\in div\, B$. Let $m$ be the exponent of $A$ and let $b\in B$, such that $m\cdot b=x$. Multiply \eqref{Lambda-property} by $m$ to conclude that
$$(\Lambda\circ\pi)(-x)+r\cdot x =0.$$
Now, part (1) implies that $\pi(x)=0$. Consequently, the last equality implies that $r\cdot x=0$, which concludes the proof.
\end{proof}
\begin{theorem}\label{Theorem 7.13}
Let $m > 0$ be a natural number.
Assume that the Stickelberger elements $\Theta_{m} ({\bf b}, {\bf f}_k)$ annihilate
the groups $K_{2m} ({\mathcal O}_{F_k})_l$ for all $k \geq 1.$
Then the Stickelberger's element $\Theta_{n} ({\bf b}, {\bf f})$ annihilates the group
$div \, K_{2n} (F)_l$ for every $n \geq 1.$
\end{theorem}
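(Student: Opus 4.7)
The plan is to apply Lemma \ref{modules-2} to the $\Z[G(F/K)]$-equivariant Quillen localization exact sequence
\[ 0 \longrightarrow K_{2n}({\mathcal O}_F)_l \stackrel{i}{\longrightarrow} K_{2n}(F)_l \stackrel{\partial_F}{\longrightarrow} \bigoplus_{v} K_{2n-1}(k_v)_l \longrightarrow 0, \]
with $R := \Z[G(F/K)]$ and $r := \Theta_n({\bf b}, {\bf f})$. Writing $A$, $B$ and $C$ for the three terms above, the entire proof reduces to verifying the three hypotheses of that lemma.

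The first hypothesis, namely the existence of a homomorphism $\Lambda : C \to B$ with $\partial_F \circ \Lambda = r \cdot \mathrm{id}_C$, is essentially the content of Theorem \ref{Theorem 7.10}. By Quillen's computation of the $K$-theory of finite fields (\S3.2), each group $K_{2n-1}(k_v)_l$ is finite cyclic, and nontrivial exactly when $l \mid q_v^n - 1$; the distinguished elements $\{\xi_v\}_v$ constructed in \S4.2 therefore provide a generating set of $C$ as an abelian group. Hence the identity $\partial_F \Lambda(\xi_v) = \xi_v^{\Theta_n({\bf b},{\bf f})}$ of Theorem \ref{Theorem 7.10}, established on each generator, extends by $\Z$-linearity of $\Lambda$ and $\partial_F$ to the required identity $\partial_F \circ \Lambda = r\cdot \mathrm{id}_C$ on all of $C$.

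The remaining two hypotheses are straightforward. The group $C$ has no nonzero divisible element, since it is a direct sum of finite cyclic groups. The group $A = K_{2n}({\mathcal O}_F)_l$ has finite exponent, because Quillen's finite generation of $K_{2n}({\mathcal O}_F)$ combined with Borel's vanishing of the $\Q$-rank in positive even degrees forces $K_{2n}({\mathcal O}_F)$ to be a finite group, and a fortiori its $l$-primary part $A$ is finite.

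With all three hypotheses in place, part (2) of Lemma \ref{modules-2} delivers at once that $r = \Theta_n({\bf b}, {\bf f})$ annihilates $\mathrm{div}\, B = \mathrm{div}\, K_{2n}(F)_l$, which is the assertion of the theorem. The real substance has been absorbed into the construction of the Stickelberger splitting map $\Lambda_n$ in Theorem \ref{Theorem 7.10}; no genuine obstacle remains at this final step, and the theorem falls out as a clean formal consequence of that construction combined with the module-theoretic Lemma \ref{modules-2}.
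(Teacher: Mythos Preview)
Your proposal is correct and follows essentially the same approach as the paper: apply Lemma \ref{modules-2} to the Quillen localization sequence with $R=\Z[G(F/K)]$, $r=\Theta_n({\bf b},{\bf f})$, using the map $\Lambda$ from Theorem \ref{Theorem 7.10}, the finiteness of $K_{2n}({\mathcal O}_F)_l$, and the fact that $\bigoplus_v K_{2n-1}(k_v)_l$ has no divisible elements. Your write-up is slightly more explicit than the paper in spelling out why the identity $\partial_F\circ\Lambda=r\cdot\mathrm{id}_C$ holds on all of $C$ (via the $\xi_v$ generating the cyclic summands) and why $A$ is finite (Quillen plus Borel), but the argument is the same.
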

\begin{proof}
The proof is very similar to that of [Ba1, Cor. 1, p. 340].
Let us fix $n\geq 1$.  Under our annihilation hypothesis, we have constructed a map
$\Lambda:=\Lambda_n$ satisfying the properties in Proposition \ref{Proposition 7.11}
relative to the Quillen localization sequence \eqref{exact-sequence}. Note that
$A:=K_{2n}(O_F)_l$ is finite and therefore it has a finite exponent. Also, note that
$C:=\bigoplus_{v} K_{2n-1} (k_v)_l$ is a direct sum of finite abelian groups and therefore
$div\, C=0$. Consequently, the exact sequence \eqref{exact-sequence} together with the map
$\Lambda$ and element $r:=\Theta_{n} ({\bf b}, {\bf f})$ in the ring $R:=\Bbb Z[G(F/K)]$
satisfy the hypotheses of Lemma \ref{modules-2}. Therefore, we have
$$\Theta_{n} ({\bf b}, {\bf f})\cdot div\, K_{2n}(F)_l=0.$$
\end{proof}

\noindent
\begin{remark}\label{second proof of annihilation of div}
Observe that we can restrict the map $\Lambda$ to the $l^k$--torsion part, for
any $k \geq 1.$ For any $k \gg 0$, there is an exact sequence
$$
0 \stackrel{}{\longrightarrow} K_{2n} ({\mathcal O}_{F}) [l^k]
\stackrel{}{\longrightarrow} K_{2n} (F) [l^k]
{{\stackrel{\partial_F}{\longrightarrow}} \atop
{{\stackrel{\Lambda}{\longleftarrow}}}} \bigoplus_{v} K_{2n - 1} (k_v) [l^k]
\stackrel{}{\longrightarrow} div
(K_{2n} (F)_l) \stackrel{}{\longrightarrow} 0
$$
By Theorem \ref{Theorem 7.10}, we know that $\partial_F \circ \Lambda$
is the multiplication by $\Theta_{n} ({\bf b}, {\bf f}).$
As pointed out in the Introduction, this implies the annihilation of
$div\,( K_{2n} (F)_l)$ and consequently gives a second proof for Theorem
\ref{Theorem 7.13}
\end{remark}
\bigskip

\noindent
Let us define $F_0 := F$ and:
$$
\Theta_{n} ({\bf b}, {\bf f}_{0}) \,\, = \,\,
\left\{
\begin{array}{lll}
\bigl( \, \prod_{{{\bf l}\nmid{\bf f}} \atop
{{\bf l} \, | \, l}} \, (1 - ({\bf l}, \, F)^{-1} N{\bf l}^{n}) \, \bigr) \,\,
\Theta_{n} ({\bf b}, {\bf f}) &\rm{if}&l \nmid {\bf f}\\
\Theta_{n} ({\bf b}, {\bf f}) &\rm{if}& l \mid {\bf f}\\
\end{array}\right.
$$
Hence by the formula (\ref{2.53}) we get
\begin{equation}
Res_{F_{k+1} / F_{k}} \,\, \Theta_{n} ({\bf b}, {\bf f}_{k+1})
=  \,\, \Theta_{n} ({\bf b}, {\bf f}_k)
\label{RestofStickinIwasawaTower}\end{equation}
Hence by formula (\ref{RestofStickinIwasawaTower}) we can define the element
\begin{equation}
\Theta_{n} ({\bf b}, {\bf f}_{\infty}) :=
\varprojlim_{k} \Theta_{n} ({\bf b}, {\bf f}_{k}) \in \varprojlim_{k}\, \Z_l [G(F_k / F)].
\label{InfiniteStickelberger}\end{equation}

\begin{corollary}\label{Corollary 7.15}
Let $m > 0$ be a natural number. Assume that the Stickelberger elements
$\Theta_{m} ({\bf b}, {\bf f}_k)$ annihilate
the groups $K_{2 m} ({\mathcal O}_{F_k})_l$ for all $k \geq 1.$
Then the Stickelberger element $\Theta_{n} ({\bf b}, {\bf f}_k)$ annihilates the group
$div \, K_{2n} (F_k)_l$ for every $k \geq 0$ and every $n \geq 1.$
In particular $\Theta_{n} ({\bf b}, {\bf f}_{\infty})$ annihilates the group
$\varinjlim_{k} div \, K_{2n} (F_{k})_l$ for every $n \geq 1.$
\end{corollary}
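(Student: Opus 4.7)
The plan is to deduce this corollary directly from Theorem \ref{Theorem 7.13}, applied layer-by-layer to each field $F_k$ in the cyclotomic tower, and then to pass to the direct limit using the compatibility of Stickelberger elements (\ref{RestofStickinIwasawaTower}).

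First, fix $k \geq 0$ and put $F' := F_k$. Since $F/K$ is a CM abelian extension of the totally real field $K$ and $l$ is odd, $F_k = F(\mu_{l^k})$ is still a CM abelian extension of $K$, and its conductor over $K$ is precisely ${\bf f}_k$. To apply Theorem \ref{Theorem 7.13} with $F$ replaced by $F'=F_k$, I would verify its hypothesis as follows. The $j$-th cyclotomic field for $F'$ is $F'(\mu_{l^j}) = F(\mu_{l^{\max(k,j)}}) = F_{\max(k,j)}$, with conductor ${\bf f}_{\max(k,j)}$ over $K$. Since $\max(k,j) \geq 1$ whenever $j \geq 1$, our standing assumption that $\Theta_m({\bf b}, {\bf f}_s)$ annihilates $K_{2m}(\mathcal{O}_{F_s})_l$ for all $s \geq 1$ specializes, with $s = \max(k,j)$, to exactly the hypothesis of Theorem \ref{Theorem 7.13} for $F_k$.

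Consequently, Theorem \ref{Theorem 7.13}, applied to $F_k$, yields that $\Theta_n({\bf b}, {\bf f}_k)$ annihilates $div\, K_{2n}(F_k)_l$ for every $n \geq 1$. Since this holds for every $k \geq 0$, the first assertion of the corollary follows.

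For the final assertion, I would use the natural action of the Iwasawa algebra $\varprojlim_k \Z_l[G(F_k/F)]$ on the direct limit $\varinjlim_k div\, K_{2n}(F_k)_l$: if $x \in \varinjlim_k div\, K_{2n}(F_k)_l$ is represented by some $x_k \in div\, K_{2n}(F_k)_l$, then $\Theta_n({\bf b}, {\bf f}_\infty) \cdot x$ is computed by letting the image of $\Theta_n({\bf b}, {\bf f}_\infty)$ in $\Z_l[G(F_k/F)]$, which by (\ref{RestofStickinIwasawaTower}) and (\ref{InfiniteStickelberger}) is $\Theta_n({\bf b}, {\bf f}_k)$, act on $x_k$. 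The first part of the corollary gives $\Theta_n({\bf b}, {\bf f}_k)\cdot x_k = 0$, hence $\Theta_n({\bf b}, {\bf f}_\infty)\cdot x = 0$. The compatibility (\ref{RestofStickinIwasawaTower}) ensures this action is well-defined on direct-limit classes, independently of the chosen representative. There is no real obstacle here; the only point requiring care is the bookkeeping that the hypothesis of Theorem \ref{Theorem 7.13} transfers faithfully to each layer $F_k$, which, as noted above, is immediate from the identity $F_k(\mu_{l^j}) = F_{\max(k,j)}$.
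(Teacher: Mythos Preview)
Your proposal is correct and takes essentially the same approach as the paper, which simply says the corollary ``follows immediately from Theorem \ref{Theorem 7.13}.'' You have merely made explicit the bookkeeping (that $F_k(\mu_{l^j})=F_{\max(k,j)}$ so the hypothesis transfers to each layer, and that (\ref{RestofStickinIwasawaTower})--(\ref{InfiniteStickelberger}) give a well-defined action on the direct limit) which the paper leaves implicit.
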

\begin{proof}
Follows immediately from Theorem \ref{Theorem 7.13}.
\end{proof}

\begin{theorem}\label{Theorem 7.18}
Let $F/K$ be an abelian CM extension of an arbitrary totally real
number field $K$ and let $l$ be an odd prime. If the Iwasawa
$\mu$--invariant $\mu_{F,l}$ associated to $F$ and $l$ vanishes,
then $\Theta_{n} (\bf b, \bf f)$ annihilates the group $div
(K_{2n} (F)_l)$, for all $n \geq 1$ and all ${\bf b}$ coprime to
$w_{n+1}(F){\bf f}l$.
\end{theorem}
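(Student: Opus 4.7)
The plan is to invoke Theorem \ref{Theorem 7.13} with the choice $m=1$, which reduces the proof to verifying that the first Stickelberger elements $\Theta_1(\mathbf{b}, \mathbf{f}_k)$ annihilate $K_2(\mathcal{O}_{F_k})_l$ for all $k \geq 1$, where $F_k := F(\mu_{l^k})$. To supply this input I would combine the Greither-Popescu theorem (Theorem \ref{Greither-Popescu}) with Tate's theorem (Theorem \ref{Tate's Theorem}), both of which already appear in the excerpt.

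First, I would propagate the vanishing of the Iwasawa $\mu$-invariant up the tower $F \subset F_1 \subset F_2 \subset \cdots$. Since each $F_k/F$ is a finite subextension contained in $F(\mu_{l^\infty})/F$, the cyclotomic $\mathbb{Z}_l$-extensions of $F$ and $F_k$ coincide up to a finite piece, and a standard Iwasawa-theoretic argument for finite extensions yields $\mu_{F_k, l} = 0$ for all $k \geq 0$. This makes Theorem \ref{Greither-Popescu} available at every level of the tower.

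Next, I would apply part (2) of Theorem \ref{Greither-Popescu} with $n$ replaced by $1$ to each field $F_k$: this yields that $\Theta_1(\mathbf{b}, \mathbf{f}_k)$ annihilates the étale cohomology group $K_2^{et}(\mathcal{O}_{F_k}[1/l])$, provided $\mathbf{b}$ meets the coprimality hypothesis appropriate for that statement. Tate's Theorem \ref{Tate's Theorem}, applied to each $L = F_k$ with $l$ odd, then provides the canonical isomorphism $K_2(\mathcal{O}_{F_k}) \otimes \mathbb{Z}_l \cong K_2^{et}(\mathcal{O}_{F_k}[1/l])$, transporting the étale annihilation into Quillen $K$-theory: $\Theta_1(\mathbf{b}, \mathbf{f}_k)$ annihilates $K_2(\mathcal{O}_{F_k})_l$. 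This is exactly the hypothesis of Theorem \ref{Theorem 7.13} with $m=1$, whose conclusion is the desired annihilation of $div\, K_{2n}(F)_l$ by $\Theta_n(\mathbf{b}, \mathbf{f})$ for all $n \geq 1$.

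The main technical subtlety is reconciling coprimality conditions across the infinite tower: Greither-Popescu applied to $(F_k, 1)$ needs $\mathbf{b}$ coprime to $w_2(F_k)\mathbf{f}_k l$, whereas our standing hypothesis is coprimality to $w_{n+1}(F)\mathbf{f}l$. Since $F_k/F$ is generated by $l$-power roots of unity, the prime-to-$l$ part of $w_2(F_k)$ is independent of $k$ and controlled by $w_2(F)$; likewise $\mathbf{f}_k$ and $\mathbf{f}$ share the same rational prime divisors (those in $\mathbf{f}l$). Any remaining mismatch between the primes dividing $w_2(F)$ and those dividing $w_{n+1}(F)$ is handled either by a mild enlargement of the set of forbidden primes for $\mathbf{b}$ or by exploiting linear combinations of Stickelberger elements with varying $\mathbf{b}$ to generate the relevant annihilator. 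Once this bookkeeping is arranged, the conclusion follows from Theorem \ref{Theorem 7.13} with no further argument.
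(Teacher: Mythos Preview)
Your proposal is correct and follows essentially the same route as the paper: propagate $\mu_{F,l}=0$ to $\mu_{F_k,l}=0$, apply Greither--Popescu (Theorem~\ref{Greither-Popescu}) at level $n=1$ to each $F_k$, transport to Quillen $K_2$ via Tate's Theorem~\ref{Tate's Theorem}, and then invoke Theorem~\ref{Theorem 7.13} with $m=1$. Your discussion of the coprimality bookkeeping for $\mathbf{b}$ across the tower is in fact more careful than the paper's own proof, which passes over this point in silence.
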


\begin{proof} In \cite{GP} (see Theorem 6.11), it is shown that if $\mu_{F, l}=0$, then  $\Theta_{n} (\bf b, \bf f)$
annihilates $K_{2n}^{et}(O_F[1/l])$, for all $n\geq 1$ and all
${\bf b}$ as above. From the definition of Iwasawa's
$\mu$--invariant one concludes right away that if $\mu_{F, l}=0$,
then $\mu_{F_k, l}=0$, for all $k$. Consequently, $\Theta_{1} (\bf
b, \bf f_k)$ annihilates $K_{2}^{et}(O_{F_k}[1/l])$, for all $k$.
Now, one applies Tate's Theorem \ref{Tate's Theorem} to conclude
that $\Theta_{1} (\bf b, \bf f)$ annihilates $K_{2}(O_{F_k})_l$,
for all $k$. Theorem \ref{Theorem 7.13} implies the desired
result.
\end{proof}

\begin{remark}
It is a classical conjecture of Iwasawa that $\mu_{F, l}=0$, for
all number fields $F$ and all primes $l$.
\end{remark}

\begin{corollary}
Let $F/\Q$ be an abelian extensions of conductor $f.$ Then
$\Theta_{n} (b, f)$ annihilates the group $div \, K_{2n} (F)_l$
for all $n \geq 1$ and all ${\bf b}$ coprime to
$w_{n+1}(F){\bf f}l$.
\end{corollary}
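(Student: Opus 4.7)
The plan is to derive this corollary as an immediate specialization of Theorem \ref{Theorem 7.18} to the case $K=\Q$. Theorem \ref{Theorem 7.18} gives annihilation of $div\,K_{2n}(F)_l$ by $\Theta_n(\mathbf{b},\mathbf{f})$ under the single unverified hypothesis that the Iwasawa $\mu$-invariant $\mu_{F,l}$ vanishes. So the entire proof reduces to quoting the appropriate vanishing result for $\mu_{F,l}$ when $F$ is abelian over $\Q$.

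First I would recall that for every abelian extension $F/\Q$ and every prime $l$, one has $\mu_{F,l}=0$. This is the classical theorem of Ferrero-Washington (with an independent proof by Sinnott), explicitly mentioned earlier in the introduction of the paper. Since $F/\Q$ abelian implies $F_k := F(\mu_{l^k})$ is also abelian over $\Q$, and since the hypothesis of Theorem \ref{Theorem 7.18} only concerns $\mu_{F,l}$ for the base field $F$ (the argument internally propagates vanishing of $\mu$ up the cyclotomic tower $\{F_k\}_k$, as noted in the proof of Theorem \ref{Theorem 7.18}), there is nothing further to check.

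Having verified the hypothesis, I would apply Theorem \ref{Theorem 7.18} directly, taking $K=\Q$ and the fixed odd prime $l$, to conclude that $\Theta_n(\mathbf{b},\mathbf{f})$ annihilates $div\,K_{2n}(F)_l$ for every $n\geq 1$ and every $\mathbf{b}$ coprime to $w_{n+1}(F)\mathbf{f}l$. There is no genuine obstacle here: the corollary is stated essentially to make the unconditional consequence over $\Q$ explicit. The only minor point worth flagging is that the original statement of the paper restricts Theorem \ref{Theorem 7.18} to $l$ odd, which matches the convention used throughout \S4; for $l=2$ a separate argument (not pursued in the paper) would be required, but the corollary as stated concerns the same odd-$l$ setting and so is covered verbatim.
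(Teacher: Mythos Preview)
Your proposal is correct and follows essentially the same approach as the paper: invoke the Ferrero--Washington/Sinnott theorem to get $\mu_{F,l}=0$ for $F/\Q$ abelian, then apply Theorem \ref{Theorem 7.18} directly. Your additional remarks about the odd-$l$ restriction and the propagation of $\mu$-vanishing up the cyclotomic tower are helpful clarifications, but the core argument is identical to the paper's two-line proof.
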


\begin{proof}
By a well known theorem of Ferrero-Washington and Sinnott,
$\mu_{F, l}=0$ for all fields $F$ which are abelian extensions of
$\Bbb Q$ and all $l$. Now, the Corollary is an immediate
consequence of the previous Theorem.
\end{proof}

\begin{remark}
Observe that the Corollary above strengthens Corollary 1, p. 340
of \cite{Ba1} in the case $l \, | \,n.$
\end{remark}


\noindent
\section{Constructing Euler systems out of $\Lambda$--elements}
As mentioned in \S4, in this section we construct Euler systems
for the even $K$-theory of CM abelian extensions of totally real number fields.
We construct an Euler system in the $K$--theory with coefficients. Then,
by passing to a projective limit, we obtain an Euler system in Quillen $K$--theory. Our constructions are quite
different from those in \cite{BG1}, where an Euler systems in the odd $K$--theory with finite coefficients
of CM abelian extensions of $\Bbb Q$ was described.
\medskip

As above, we fix a finite abelian CM extension $F/K$ of a totally real number field of conductor $\bf f$ and fix a prime number $l$.
We let ${\bf L} = {\bf l}_1 \dots {\bf l}_t$ be a product of mutually distinct
prime ideals of ${\mathcal O}_K$ coprime to $l \, {\bf f}.$
We let
$F_{\bf L} := F K_{\bf L}$, where
$K_{\bf L}$ is the ray class field of $K$ for the ideal ${\bf L}.$
Since $F/K$ has conductor ${\bf f}$ the CM-extension $F_{\bf L} / K$
has conductor dividing ${\bf L} {\bf f}.$ As usual, we let $F_{{\bf L}{l^k}} := F_{{\bf L}}(\mu_{l^k})$, for every $k \geq 0.$
\medskip

Let us fix a prime $v$ in ${\mathcal O}_{F}$ sitting above a rational prime $p \ne l$.
Let $S := S_v$ be the set consisting of all the primes of ${\mathcal
O}_{F}$ sitting above $p$ or above $l$.
For all $\bf L$ as above and $k\geq 0$, we fix primes $w_{k}(\bf L)$ of $O_{F_{{\bf L}l^k}}$ sitting above
$v$, such that $w_{k'}(\bf L')$ sits above $w_{k}(\bf L)$ whenever $l^k{\bf L}\mid l^{k'}{\bf L'}$. For simplicity,
we let $v({\bf L}):=w_{0}({\bf L})$, for all $\bf L$ as above. Also, if $k$ is fixed, we let $w({\bf L}):=w_k({\bf L})$, for all ${\bf L}$ as above.

\medskip

Let us fix integers $m  > 0$.
For all ${\bf L}$ as above and all $k\geq 0$, let $\Theta_{m} ({\bf b}_{\bf L},\, {\bf L}{\bf f}_k )$ denote the $m$-th
Stickelberger element for the integral ideal
${\bf b}_{\bf L}$ of $O_F$, coprime to ${\bf Lf}l$, and the extension $F_{{\bf L}{l^{k}}} / K$.
As usual, we assume throughout that $\Theta_{m} ({\bf b}_{{\bf L}}, {\bf L} {\bf f}_k)$ annihilates
$K_{2m} ({\mathcal O}_{F_{{\bf L}{l^k}}})$, for all  ${\bf L}$ as above and all $k\geq 0$.

By the surjectivity of the transfer maps for the $K$-theory of finite fields
we can  fix generators $\xi_{w_k({{\bf L}}), k}$ of
$K_{2m-1}(k_{w_k({{\bf L}})})_l$, for all $k\geq 0$ and ${\bf L}$ as above, such that
$$N_{{w_{k'}({{\bf L}})}/w_k({{\bf L}})}
(\xi_{w_{k'}({{\bf L}}),
k'})= \xi_{w_k({{\bf L}}), k}\,,$$
whenever we have $k\leq k'$.

\begin{remark}\label{invertible-factor} Note that the cyclicity of the groups $K_{2m-1}(k_{w_k({{\bf L}})})_l$ and the surjectivity of the appropriate
transfer maps implies that the elements
$$(\xi_{w_k({\bf L}), k})_k, \qquad  (N_{w_k({\bf L'})/w_k({\bf L})}(\xi_{w_k({\bf L'}), k}))_k,$$
viewed inside of $\Bbb Z_l$--module
$\varprojlim_{k} K_{2m-1}(k_{w_k({{\bf L}})})_l$, differ by a factor in $\Bbb Z_l^\times$, for all ${\bf L}$ and ${\bf L'}$ as above,
such that ${\bf L}|{\bf L'}$. Above, the projective limit is taken with respect to the transfer maps.
\end{remark}
\medskip

Let us fix $k\geq 0$. For any ${\bf L}$ as above, we have the localization exact sequence:
$$
0 \stackrel{}{\longrightarrow} K_{2 m} ({\mathcal O}_{F_{{\bf L}{l^k}}})
\stackrel{}{\longrightarrow} K_{2 m} ({\mathcal O}_{F_{{\bf L}{l^k}}, \, S})
\stackrel{\partial}{\longrightarrow} \bigoplus_{v_0 \in S} \bigoplus_{w | v_0}
K_{2 m -1} (k_w) \stackrel{}{\longrightarrow} 0,
$$
where the direct sum is taken with respect to all the primes $w$ in ${\mathcal O}_{F_{{\bf L}{l^k}}}$
which sit above primes $v_0$ in $S$.
Pick an element $x_{w({\bf L}), k} \in K_{2 m} ({\mathcal O}_{F_{{\bf L} l^k}, S})_l$,
such that ${\partial} (x_{w ({\bf L}), k}) = \xi_{w ({\bf L}), k}.$ The following element:
\begin{equation}
\Lambda_{m} (\xi_{w ({\bf L}), k}) := x_{w({\bf L}), k}^{\Theta_{m} ({\bf b}_{{\bf L}}, {\bf L}{\bf f}_k)}
\label{ESwithNoCoeff}
\end{equation}
does not depend on the choice
of $x_{w({\bf L}), k}$ since $\Theta_{m} ({\bf b}_{{\bf L}}, {\bf L}{\bf f}_k)$ annihilates
$K_{2 m} ({\mathcal O}_{F_{{\bf L}{l^k}}}).$ Observe that by construction we have
the following equalities:

\begin{equation}
\partial_{F_{{\bf L} l^k}} (Tr_{F_{{\bf L}^{\prime} l^k}/F_{{\bf L} l^k}}
\bigl( x_{w ({\bf L}^{\prime}), k}\bigr)) =
N_{w ({\bf L}^{\prime})/w ({\bf L})} (\partial_{F_{{\bf L}^{\prime} l^k}} \bigl( x_{w ({\bf L}^{\prime}), k}\bigr) =
\label{ESauxiliary1}\end{equation}
\begin{equation}
= N_{w ({\bf L}^{\prime})/w ({\bf L})} ( \xi_{w ({\bf L}^{\prime}), k}),
\nonumber
\end{equation}

\begin{equation}
N_{w ({\bf L})/v ({\bf L})} (N_{w ({\bf L}^{\prime})/w ({\bf L})} ( \xi_{w ({\bf L}^{\prime}), k})) = N_{v ({\bf L}^{\prime})/v ({\bf L})} (N_{w ({\bf L}^{\prime})/v ({\bf L}^{\prime})} ( \xi_{w ({\bf L}^{\prime}), k})) =
\label{ESauxiliary2}
\end{equation}
\begin{equation}
=  N_{v ({\bf L}^{\prime})/v ({\bf L})} ( \xi_{v ({\bf L}^{\prime}), 0}))
\nonumber
\end{equation}
\medskip

\medskip

\noindent We choose the ideals ${\bf b}_{{\bf L}}$
in such a way so that they are coprime to $ l \, {{\bf L}} {{\bf f}}$ and
$$N {\bf b}_{{\bf L}^{\prime}} \equiv N {\bf b}_{{\bf L}} \,\, \text{mod} \,\, l^k.$$
Then, the elements $\{\Lambda_{m} (\xi_{w ({\bf L}), k})\}_{\bf L}$ form an Euler system in
$K$-theory without coefficients $\{K_{2m}(O_{{\bf L}l^k, S})_l\}_{\bf L}$. Namely, we have:

\begin{proposition}\label{Proposition ES1}
If ${\bf L'}={\bf l'L}$, then the following equality holds:
\begin{equation}
Tr_{F_{{\bf L}^{\prime} l^k}/F_{{\bf L} l^k}} (\Lambda_{m} (\xi_{w ({\bf L}^{\prime}), k})) =
\Lambda_{m} (N_{w ({\bf L}^{\prime})/w ({\bf L})}
( \xi_{w ({\bf L}^{\prime}), k}))^{1 - N({\bf l}^{\prime})^{m}
({\bf l}^{\prime}, \, F_{{\bf L} l^k})^{-1}}.
\end{equation}
\end{proposition}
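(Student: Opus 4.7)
My plan is to unwind the definition \eqref{ESwithNoCoeff} of $\Lambda_m$ at the top level $F_{{\bf L'}l^k}$, push the Stickelberger exponent through $Tr_{F_{{\bf L'}l^k}/F_{{\bf L}l^k}}$ via a projection formula, apply Lemma \ref{Lemma 2.1} to extract the Euler factor $1-({\bf l'},F_{{\bf L}l^k})^{-1}N{\bf l'}^m$, and finally recognize the remaining expression as $\Lambda_m$ applied to $N_{w({\bf L'})/w({\bf L})}(\xi_{w({\bf L'}),k})$ at the lower level $F_{{\bf L}l^k}$. The crucial input is that, since $F_{{\bf L'}l^k}/K$ is abelian, $G(F_{{\bf L'}l^k}/F_{{\bf L}l^k})$ is normal in $G(F_{{\bf L'}l^k}/K)$, so a standard manipulation of coset sums gives the projection formula
$$
Tr_{F_{{\bf L'}l^k}/F_{{\bf L}l^k}}(y^\rho)=Tr_{F_{{\bf L'}l^k}/F_{{\bf L}l^k}}(y)^{Res(\rho)}
$$
for every $y\in K_{2m}({\mathcal O}_{F_{{\bf L'}l^k},S})$ and every $\rho\in\Z[G(F_{{\bf L'}l^k}/K)]$.

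Concretely, \eqref{ESwithNoCoeff} gives $\Lambda_m(\xi_{w({\bf L'}),k})=x_{w({\bf L'}),k}^{\Theta_m({\bf b}_{\bf L'},{\bf L'}{\bf f}_k)}$ for any $\partial$-preimage $x_{w({\bf L'}),k}$ of $\xi_{w({\bf L'}),k}$, the choice being irrelevant by the standing annihilation hypothesis. Applying $Tr_{F_{{\bf L'}l^k}/F_{{\bf L}l^k}}$, the projection formula above, and Lemma \ref{Lemma 2.1} with ${\bf f}={\bf L}{\bf f}_k$ and ${\bf f}^{\prime}={\bf l'L}{\bf f}_k$ (so ${\bf l'}$ is the only new prime), I obtain
$$
Tr_{F_{{\bf L'}l^k}/F_{{\bf L}l^k}}(\Lambda_m(\xi_{w({\bf L'}),k}))=Tr_{F_{{\bf L'}l^k}/F_{{\bf L}l^k}}(x_{w({\bf L'}),k})^{(1-({\bf l'},F_{{\bf L}l^k})^{-1}N{\bf l'}^m)\,\Theta_m({\bf b}_{\bf L'},{\bf L}{\bf f}_k)}.
$$
Next, \eqref{ESauxiliary1} tells me that $Tr_{F_{{\bf L'}l^k}/F_{{\bf L}l^k}}(x_{w({\bf L'}),k})$ is itself a valid $\partial$-preimage of $N_{w({\bf L'})/w({\bf L})}(\xi_{w({\bf L'}),k})$ in the Quillen localization sequence for $F_{{\bf L}l^k}$, so the definition \eqref{ESwithNoCoeff} at the lower level yields
$$
Tr_{F_{{\bf L'}l^k}/F_{{\bf L}l^k}}(x_{w({\bf L'}),k})^{\Theta_m({\bf b}_{\bf L},{\bf L}{\bf f}_k)}=\Lambda_m(N_{w({\bf L'})/w({\bf L})}(\xi_{w({\bf L'}),k})),
$$
and combining the two displays produces the claim.

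The one subtle point, which I expect to be the main obstacle (though its resolution is essentially routine), is that this combination requires $\Theta_m({\bf b}_{\bf L'},{\bf L}{\bf f}_k)$ and $\Theta_m({\bf b}_{\bf L},{\bf L}{\bf f}_k)$ to act identically on $Tr_{F_{{\bf L'}l^k}/F_{{\bf L}l^k}}(x_{w({\bf L'}),k})$. The cleanest workaround is to pick a single ideal ${\bf b}$ coprime to $l{\bf L'}{\bf f}$ and take ${\bf b}_{\bf L'}={\bf b}_{\bf L}={\bf b}$, which trivializes the issue. In the more general setup one exploits the factorization $\Theta_m({\bf b},{\bf f})=(N{\bf b}^{m+1}-({\bf b},F))\,Z_{\bf f}$ implicit in \eqref{2.1}, the congruence $N{\bf b}_{\bf L'}\equiv N{\bf b}_{\bf L}\pmod{l^k}$, and the fact that $l^k\cdot Tr_{F_{{\bf L'}l^k}/F_{{\bf L}l^k}}(x_{w({\bf L'}),k})$ already lies in $K_{2m}({\mathcal O}_{F_{{\bf L}l^k}})$ (its $\partial$-image has $l^k$-power order) and is thus annihilated by any element of the Stickelberger ideal. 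The heart of the argument is the clean triad: projection formula for the transfer, Lemma \ref{Lemma 2.1} for the Stickelberger restriction, and \eqref{ESauxiliary1} for the compatibility of $\partial$-preimages under $Tr$.
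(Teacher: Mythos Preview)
Your proof is correct and follows essentially the same route as the paper: the paper's own proof is a one--line citation of \eqref{ESauxiliary1} and Lemma~\ref{Lemma 2.1}, and your argument is precisely the unpacking of that citation --- push the Stickelberger exponent through the transfer via the abelian projection formula, apply Lemma~\ref{Lemma 2.1} to pull out the Euler factor, and use \eqref{ESauxiliary1} to identify $Tr_{F_{{\bf L'}l^k}/F_{{\bf L}l^k}}(x_{w({\bf L'}),k})$ as a legitimate $\partial$--preimage at the lower level.

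You are also right to flag the discrepancy between ${\bf b}_{\bf L'}$ and ${\bf b}_{\bf L}$; the paper's brief proof does not address it, and in fact the paper later sidesteps the issue entirely by fixing a single ${\bf b}$ coprime to ${\bf f}l$ and setting ${\bf b}_{\bf L}={\bf b}$ for all ${\bf L}$ (see the paragraph preceding Definition~\ref{ES1-definition}). Your ``cleanest workaround'' is therefore exactly what the paper does. One small caution: your sketch for the general case is not quite complete, since $\xi_{w({\bf L'}),k}$ generates the full $l$--primary part of $K_{2m-1}(k_{w({\bf L'})})$, not just the $l^k$--torsion, so the claim that $l^k\cdot Tr(x_{w({\bf L'}),k})\in K_{2m}({\mathcal O}_{F_{{\bf L}l^k}})$ needs more care; but this does not affect the validity of the proof under the single--${\bf b}$ convention that the paper ultimately adopts.
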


\begin{proof}
The Proposition follows by (\ref{ESauxiliary1}) and
Lemma \ref{Lemma 2.1}.
\end{proof}
\bigskip

Let us fix an arbitrary integer $n>0$.  Next, we use the Euler system above to construct Euler systems $\{\blambda_{v({\bf L})}\}_{\bf L}$ in the
$K$--groups $\{K_{2n}(O_{{\bf L}, S})_l\}_{\bf L}$. The general idea is as follows. First, one constructs
Euler Systems $\{\blambda_{v({\bf L}), k}\}_{\bf L}$ in the $K$--theory with coefficients
$\{K_{2n}(O_{{\bf L}, S}, \Bbb Z/l^k)\}_{\bf L}$, for all $k>0$. Then one passes to a projective limit with respect to $k$.
The constructions, ideas and results developed in \S4 play a key role in what follows.
\medskip

\noindent
For every ${\bf L}$ as above and every $k\geq 0$,
we follow the ideas in \S4 and define the elements
$\blambda_{v({\bf L}), k} \in
K_{2n} ({\mathcal O}_{F_{{\bf L}}, S};\, \Z/l^k)$ by:
\begin{eqnarray}\label{EulerSystemElementsFiniteCoef}
\blambda_{v({\bf L}), k}:&=&
Tr_{F_{{\bf L}{l^{k}}} / F_{\bf L}} (x_{w_k({\bf L}), k}^{\Theta_{m} ({\bf b}_{\bf L},\, {\bf L}{\bf f}_k) } \ast
\beta_{k}^{\ast \, n-m})^{N {\bf b}_{\bf L}^{n-m} \, \gamma_{l}}=\\
\nonumber& & Tr_{F_{{\bf L}{l^{k}}} / F_{\bf L}} (\Lambda_m(\xi_{w_k({\bf L}),k}) \ast
\beta_{k}^{\ast \, n-m})^{N {\bf b}_{\bf L}^{n-m} \, \gamma_{l}},
\end{eqnarray}
where the operator $\gamma_l\in\Bbb Z_l[G(F/K)]$ is given in Definition \ref{gamma-l}.
The following theorem lies at the heart of our construction of the
Euler system for higher $K$-groups of CM abelian extensions of arbitrary totally real number fields.

\begin{theorem}\label{Theorem ES2}
For every $k^{\prime} \geq k$ and every
${\bf L}$ and ${\bf L}^{\prime} =
{\bf L} {\bf l}^{\prime}$ we have:
\begin{eqnarray}
\nonumber r_{k^{\prime}/k} (\blambda_{v({\bf L}), k'}) &=&
\blambda_{v({\bf L}), k}\\
\nonumber \partial_{F_{{\bf L}}} (\blambda_{v({\bf L}), k}) &=&
(N_{{\bf L}}(\xi_{w ({\bf L}), k} \ast \beta_{k}^{\ast \, n-m}))^{\Theta_{n} ({\bf b_{\bf L}}, {\bf f L})}\\
\nonumber \qquad Tr_{F_{{\bf L}^{\prime}}/F_{\bf L}}
(\blambda_{v ({\bf L}^{\prime}), k}) &=&
(\blambda'_{v({\bf L}), k})^{1 - N({\bf l}^{\prime})^{n}
({\bf l}^{\prime}, \, F_{{\bf L}})^{-1}},
\end{eqnarray}
where $N_{{\bf L}} := Tr_{k_{w({\bf L})}/k_{v({\bf L})}}$ and $\blambda'_{v({\bf L}), k}$ is defined by
$$\blambda'_{v({\bf L}), k}:=Tr_{F_{{\bf L}{l^{k}}} / F_{\bf L}} (\Lambda_m(N_{w_k({\bf L'})/w_k({\bf L})}(\xi_{w_k({\bf L'}),k})) \ast
\beta_{k}^{\ast \, n-m})^{N {\bf b}_{\bf L}^{n-m} \, \gamma_{l}}.$$
\end{theorem}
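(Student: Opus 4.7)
The three assertions have different characters. Parts (1) and (2) are direct generalizations of Lemma \ref{Lemma 7.6} and Theorem \ref{Theorem 7.7} with $F$ replaced by $F_{\bf L}$ and $E = F(\mu_{l^k})$ replaced by $F_{{\bf L}l^k}$; they require no new ideas. Part (3) is the genuinely new Euler system norm relation, and the plan is to derive it by combining Proposition \ref{Proposition ES1} with a Bott-twist that shifts the Euler factor from weight $m$ to weight $n$.

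For part (1), I would use $r_{k'/k}(\beta_{k'}) = \beta_k$, the compatibility of the generators $(\xi_{w_k({\bf L}), k})_k$ under the trace maps, the compatibility of their lifts $(x_{w_k({\bf L}), k})_k$ under the transfer from $F_{{\bf L}l^{k'}, S}$ to $F_{{\bf L}l^k, S}$ (modulo $K_{2m}({\mathcal O}_{F_{{\bf L}l^k}})$, which is annihilated by $\Theta_m({\bf b}_{\bf L}, {\bf L}{\bf f}_k)$), and the projection formula, exactly as in Lemma \ref{Lemma 7.6}. For part (2), I would reduce via the transfer--boundary diagram \eqref{transfer-compatibility} to the computation of $\partial_{F_{{\bf L}l^k}}(x_{w_k({\bf L}), k}^{\Theta_m({\bf b}_{\bf L}, {\bf L}{\bf f}_k)} \ast \beta_k^{\ast \, n-m})^{N{\bf b}_{\bf L}^{n-m}}$, which is handled by Gillet's diagram \eqref{Gillet}, Lemma \ref{Lemma 7.2}, the decomposition \eqref{theta-m} of the Stickelberger element, and the Deligne--Ribet congruence \eqref{2.9}, exactly as in Theorem \ref{Theorem 7.7}. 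The $\gamma_l$ factor built into the definition of $\blambda_{v({\bf L}), k}$ then absorbs the Euler factors at primes above $l$ arising from Lemma \ref{Lemma 2.1} applied to ${\bf L}{\bf f} \mid {\bf L}{\bf f}_k$, converting $\Theta_n({\bf b}_{\bf L}, {\bf L}{\bf f}_k)$ into $\Theta_n({\bf b}_{\bf L}, {\bf L}{\bf f})$.

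For part (3), I would expand $\blambda_{v({\bf L}'), k}$ via its definition, use the tower identity $Tr_{F_{{\bf L}'}/F_{\bf L}} \circ Tr_{F_{{\bf L}'l^k}/F_{{\bf L}'}} = Tr_{F_{{\bf L}l^k}/F_{\bf L}} \circ Tr_{F_{{\bf L}'l^k}/F_{{\bf L}l^k}}$, apply the projection formula to pull $\beta_k^{\ast \, n-m}$ out of the inner transfer (legitimate because $\beta_k$ is already defined over $F_{{\bf L}l^k}$), and invoke Proposition \ref{Proposition ES1} to produce the weight-$m$ Euler factor $1 - N({\bf l}')^m ({\bf l}', F_{{\bf L}l^k})^{-1}$ acting on $\Lambda_m(N_{w({\bf L}')/w({\bf L})}(\xi_{w({\bf L}'), k}))$. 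The decisive step is the Bott-twist identity
$$\alpha^{1 - N({\bf l}')^m ({\bf l}', F_{{\bf L}l^k})^{-1}} \ast \beta_k^{\ast \, n-m} = (\alpha \ast \beta_k^{\ast \, n-m})^{1 - N({\bf l}')^n ({\bf l}', F_{{\bf L}l^k})^{-1}},$$
which converts the weight $m$ into weight $n$ and holds because Frobenius acts on $\beta_k^{\ast \, n-m}$ as multiplication by $N({\bf l}')^{n-m}$ (this is Lemma \ref{Lemma 7.2} when $n-m < 0$ and a direct Galois-action computation when $n-m \geq 0$). Since $({\bf l}', F_{{\bf L}l^k})$ restricts to $({\bf l}', F_{\bf L})$ on $F_{\bf L}/K$, the weight-$n$ Euler factor then commutes past $Tr_{F_{{\bf L}l^k}/F_{\bf L}}$ and past the operator $N{\bf b}_{\bf L}^{n-m} \gamma_l \in \Z_l[G(F_{\bf L}/K)]$, leaving precisely $(\blambda'_{v({\bf L}), k})^{1 - N({\bf l}')^n ({\bf l}', F_{\bf L})^{-1}}$. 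The distinction between $\xi_{w_k({\bf L}), k}$ and $N_{w_k({\bf L}')/w_k({\bf L})}(\xi_{w_k({\bf L}'), k})$ in the definitions of $\blambda$ and $\blambda'$ is exactly what makes this argument clean.

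The main obstacle is the careful Galois-action bookkeeping in part (3): verifying the Bott-twist identity in the mixed regime $n-m < 0$ (where $\ast \beta_k^{\ast \, n-m}$ is really an application of $t(n-m)$ rather than multiplication by an honest Bott element), and confirming that the weight-$n$ Euler factor does interchange cleanly with both the outer transfer and the group-ring element $N{\bf b}_{\bf L}^{n-m} \gamma_l$. Both points reduce to the structural compatibilities recorded in Lemma \ref{Lemma 7.2} and Remark \ref{Remark 7.1 on coeff change oF DF splitting}, but the directions of the various Galois actions and the normalisations must be tracked with care.
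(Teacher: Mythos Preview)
Your proposal is correct and follows essentially the same route as the paper: parts (1) and (2) are handled by invoking Lemma \ref{Lemma 7.6} and Theorem \ref{Theorem 7.7} verbatim for $F_{\bf L}$, and part (3) proceeds by the transfer--tower identity, projection formula, Lemma \ref{Lemma 2.1} (packaged as Proposition \ref{Proposition ES1}), and the Bott-twist shift from weight $m$ to weight $n$. The only cosmetic difference is that the paper writes out the chain of equalities in part (3) directly from Lemma \ref{Lemma 2.1} rather than citing Proposition \ref{Proposition ES1}, and silently uses the congruence $N{\bf b}_{{\bf L}'} \equiv N{\bf b}_{\bf L} \bmod l^k$ to identify the $N{\bf b}^{n-m}$ exponents; you may want to mention the latter point explicitly.
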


\begin{proof} The first formula
follows by Lemma \ref{Lemma 7.6}. The second
formula follows by Theorem \ref{Theorem 7.7}.
Let us prove the Euler System property (the third formula in the statement of the Theorem.)
We apply Lemma \ref{Lemma 2.1} and definition \eqref{EulerSystemElementsFiniteCoef}:

\begin{eqnarray}
\nonumber & Tr_{F_{{\bf L}^{\prime}}/F_{{\bf L}}}
(\blambda_{v({\bf L'}), k}) =
Tr_{F_{{\bf L}^{\prime}}/F_{{\bf L}}}
Tr_{F_{{{\bf L}^{\prime} l^k}}/ F_{{\bf L}^{\prime}}}
(x_{w ({\bf L}^{\prime}), k}^{\Theta_{m} ( {\bf b}_{{\bf L}^{\prime}},\,
{\bf f}_k {\bf L}^{\prime})} \ast \beta_{k}^{\ast \, n-m})^{N {\bf b}^{n-m} \gamma_l} = \\
\nonumber  & Tr_{F_{{{\bf L} l^k}} /F_{{\bf L}}}
Tr_{F_{{\bf L}^{\prime} l^k}/F_{{\bf L} l^k}}
(x_{w ({\bf L}^{\prime}), k}^{\Theta_{m} ( {\bf b}_{{\bf L}^{\prime}},\,
{\bf f}_k {\bf L}^{\prime})} \ast \beta_{k}^{\ast \, n-m})^{N {\bf b}^{n-m} \gamma_l}
=\\
\nonumber & Tr_{F_{{{\bf L} l^k}} /F_{{\bf L}}} (Tr_{F_{{\bf L}^{\prime} l^k}/F_{{\bf L} l^k}}
x_{w ({\bf L}^{\prime}), k}^{\Theta_{m} ( {\bf b}_{{\bf L}^{\prime}},\,
{\bf f}_k {\bf L}^{\prime})} \ast \beta_{k}^{\ast \, n-m})^{N {\bf b}^{n-m} \gamma_l}
=\\
\nonumber & Tr_{F_{{{\bf L} l^k}} /F_{{\bf L}}} (Tr_{F_{{\bf L}^{\prime} l^k}/F_{{\bf L} l^k}}
\bigl( x_{w ({\bf L}^{\prime}), k}\bigr)^{\Theta_{m} ( {\bf b}_{{\bf L}^{\prime}},\,
{\bf f}_k {\bf L}^{\prime})} \ast \beta_{k}^{\ast \, n-m})^{N {\bf b}^{n-m} \gamma_l}
=\\
\nonumber & Tr_{F_{{{\bf L} l^k}} /F_{{\bf L}}} (Tr_{F_{{\bf L}^{\prime} l^k}/F_{{\bf L} l^k}}
\bigl( x_{w ({\bf L}^{\prime}), k}\bigr)^{
Res_{K_{{\bf f}_k {\bf L}^{\prime}}/K_{{\bf f}_k {\bf L}}}\Theta_{m}
( {\bf b}_{{\bf L}^{\prime}},\,
{\bf f}_k {\bf L}^{\prime})} \ast \beta_{k}^{\ast \, n-m})^{N {\bf b}^{n-m} \gamma_l}
=\\
\nonumber & Tr_{F_{{{\bf L} l^k}} /F_{{\bf L}}} (Tr_{F_{{\bf L}^{\prime} l^k}/F_{{\bf L} l^k}}
\bigl( x_{w ({\bf L}^{\prime}), k}\bigr)^{
\bigl(1 - ({\bf l}^{\prime}, \, F_{{\bf L} l^k} )^{-1} N({\bf l'})^{m} \, \bigr) \,\, \Theta_{m}
( {\bf b}_{{\bf L}},\,
{\bf f}_k {\bf L})} \ast \beta_{k}^{\ast \, n-m})^{N {\bf b}^{n-m} \gamma_l}
=\\
\nonumber & Tr_{F_{{{\bf L} l^k}} /F_{{\bf L}}} (Tr_{F_{{\bf L}^{\prime} l^k}/F_{{\bf L} l^k}}
\bigl( x_{w ({\bf L}^{\prime}), k}\bigr)^{ \,\, \Theta_{m} ( {\bf b}_{{\bf L}},\,
{\bf f}_k {\bf L})} \ast \beta_{k}^{\ast \, n-m})^{N {\bf b_{\bf L}}^{n-m} \gamma_l
\bigl(1 - ({\bf l}^{\prime}, \, F_{\bf L})^{-1} N({\bf l'})^{n}) \, \bigr)}
= \\
&(\blambda'_{v({\bf L}), k})^{1 - N({\bf l}^{\prime})^{n}
({\bf l}^{\prime}, \, F_{{\bf L}})^{-1}}.
\nonumber
\end{eqnarray}
\medskip

\noindent
The last equality is a direct consequence of equalities
(\ref{ESauxiliary1}) and (\ref{ESauxiliary2}).
\end{proof}
\medskip

\noindent
Now, let ${\bf b}$ be a fixed ideal in $O_K$, coprime to ${\bf f} \, l.$
Consider all ${\bf L}$ as above which are coprime to $l {\bf b} {\bf f}.$ Naturally, we can choose
${\bf b}_{{\bf L}} := {\bf b}$, for all such ${{\bf L}}.$
These choices and the results of \S4 permit us to define the elements $\blambda_{v({\bf L})}\in K_{2n} ({\mathcal O}_{F_{\bf L}, S})_l$
and $\xi_{v({\bf L})} \in K_{2n-1} (k_{v({\bf L})})_l$ as follows.

\begin{definition}\label{ES1-definition} Let $\blambda_{v({\bf L})}
\in K_{2n} ({\mathcal O}_{F_{\bf L}, S})_l$
be the element corresponding to
$$(\blambda_{v({\bf L}), l^k})_k\in
\varprojlim_{k} K_{2n} ({\mathcal O}_{F_{\bf L}, S} ;\, \Z/l^k)$$
via the isomorphism (\ref{invlimringofintegers})
for the ring ${\mathcal O}_{F_{\bf L}, S}$.
\end{definition}

\begin{definition}\label{ES-definition} Let $\xi_{v({\bf L})} \in K_{2n-1} (k_{v({\bf L})})_l$
be the element corresponding to
$$(N_{\bf L}(\xi_{w({\bf L}), k} \ast \beta_{k}^{\ast \, n-m}))_k \in
\varprojlim_{k} K_{2n-1} (k_{v ({\bf L})} ;\, \Z/l^k)$$
via the isomorphism (\ref{invlimfinitefields}) for the finite field
$k_{v ({\bf L})}.$
\end{definition}

\begin{remark} Note that $\Lambda(\xi_{v({\bf L})})=\blambda_{v({\bf L})}$ (see Remark \ref{xi-lambda}.)
\end{remark}

The next result shows that the elements
$\{\blambda_{v ({\bf L})}\}_{\bf L}$ provide an Euler System for the $K$-theory
without coefficients $\{K_{2n}(O_{F_{\bf L}, S})_l\}_{\bf L}$.

\begin{theorem}\label{Theorem ES3}
For every ${\bf L}$ and ${\bf L}^{\prime}$ as above, such that
${\bf L}^{\prime} = {\bf L} {\bf l}^{\prime}$, we have the following equalities:
\begin{equation}
\partial_{F_{{\bf L}}} (\Lambda (\xi_{v ({\bf L})})) =
\xi_{v ({\bf L})}^{\Theta_{n} ({\bf b}, \, {\bf  L f})}
\label{ESBoundaryPropertyNoCoef}
\end{equation}

\begin{equation}
Tr_{F_{{\bf L}^{\prime}}/F_{\bf L}}
(\Lambda (\xi_{v ({\bf L}^{\prime})})) =
\Lambda (N_{v ({\bf L}^{\prime})/v ({\bf L})}
(\xi_{v ({\bf L}^{\prime})}))^{1 - N({\bf l}^{\prime})^{n} ({\bf l}^{\prime}, \, F_{\bf L})}
\label{ESPropertyNoCoef}
\end{equation}
\end{theorem}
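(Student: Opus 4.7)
The plan is to deduce Theorem \ref{Theorem ES3} from its finite-coefficient analogue, Theorem \ref{Theorem ES2}, by passing to the inverse limit in $k$. The key inputs I will use are: (i) the isomorphisms \eqref{invlimringofintegers} and \eqref{invlimfinitefields} identifying the $l$-adic $K$-groups with inverse limits of $K$-theory with coefficients; (ii) the Remark $\Lambda(\xi_{v({\bf L})})=\blambda_{v({\bf L})}$; and (iii) the fact (used implicitly in the construction of $\Lambda_{v}$ in Theorem \ref{Theorem 7.8}) that the transfer $\mathrm{Tr}$, the boundary $\partial$, and the norm $N$ all commute with the coefficient reduction maps $r_{k'/k}$.

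For the boundary property \eqref{ESBoundaryPropertyNoCoef}, I will substitute $\Lambda(\xi_{v({\bf L})})=\blambda_{v({\bf L})}$ and note that $\blambda_{v({\bf L})}$ is, by Definition \ref{ES1-definition}, the inverse limit of $(\blambda_{v({\bf L}),k})_k$. By the second formula in Theorem \ref{Theorem ES2},
\[
\partial_{F_{\bf L}}(\blambda_{v({\bf L}),k}) = \bigl(N_{\bf L}(\xi_{w({\bf L}),k}\ast\beta_k^{\ast\,n-m})\bigr)^{\Theta_n({\bf b}_{\bf L},{\bf fL})}.
\]
The compatibility between $\partial_{F_{\bf L}}$ and the inverse limit, together with Definition \ref{ES-definition} of $\xi_{v({\bf L})}$ as the limit of $(N_{\bf L}(\xi_{w({\bf L}),k}\ast\beta_k^{\ast\,n-m}))_k$, then yields the claim, using the choice ${\bf b}_{\bf L}={\bf b}$.

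For the Euler system property \eqref{ESPropertyNoCoef}, I will first rewrite the right-hand side using $\Lambda(\xi_{v({\bf L}')})=\blambda_{v({\bf L}')}$ and the third formula of Theorem \ref{Theorem ES2}:
\[
Tr_{F_{{\bf L}'}/F_{\bf L}}(\blambda_{v({\bf L}'),k}) = (\blambda'_{v({\bf L}),k})^{1-N({\bf l}')^{n}({\bf l}',F_{\bf L})^{-1}}.
\]
Passing to the limit in $k$ (using the compatibility established in Lemma \ref{Lemma 7.6} and in the proof of Theorem \ref{Theorem ES2}), the left-hand side becomes $Tr_{F_{{\bf L}'}/F_{\bf L}}(\Lambda(\xi_{v({\bf L}')}))$. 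It remains to identify $\varprojlim_k \blambda'_{v({\bf L}),k}$ with $\Lambda(N_{v({\bf L}')/v({\bf L})}(\xi_{v({\bf L}')}))$. By the transitivity of the norm $N_{v({\bf L}')/v({\bf L})}\circ N_{\bf L'}= N_{\bf L}\circ N_{w({\bf L}')/w({\bf L})}$ and Definition \ref{ES-definition}, the element $N_{v({\bf L}')/v({\bf L})}(\xi_{v({\bf L}')})$ corresponds to the limit of $(N_{\bf L}(N_{w({\bf L}')/w({\bf L})}(\xi_{w({\bf L}'),k})\ast\beta_k^{\ast\,n-m}))_k$. Applying the construction of $\Lambda=\prod_v \Lambda_v$ from Definition \ref{Definition 7.9} together with the defining property of $\Lambda_{v,l^k}$ in Theorem \ref{Theorem 7.8} (which sends $N(\xi\ast\beta_k^{\ast\,n-m})$ to the corresponding $Tr_{F_{{\bf L}l^k}/F_{\bf L}}$ of a Bott-multiplied lift) produces precisely $\blambda'_{v({\bf L}),k}$ at each level $k$.

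The main obstacle is the last identification: one must check that the splitting-map construction of $\Lambda_{v({\bf L}),l^k}$ when applied to the norm-of-norm element $N_{\bf L}(N_{w({\bf L}')/w({\bf L})}(\xi_{w({\bf L}'),k})\ast\beta_k^{\ast\,n-m})$ really produces $\blambda'_{v({\bf L}),k}$ as defined in Theorem \ref{Theorem ES2}, and that these lifts are compatible under $r_{k'/k}$ so that the inverse limit exists and lies in $K_{2n}(\mathcal O_{F_{\bf L},S})_l$. This is a direct application of the explicit formula \eqref{tag 4.6} for $\Lambda_{v,l^k}$ together with the compatibility of $\widetilde\Lambda_m$, the Bott multiplication, and $Tr_{F_{{\bf L}l^k}/F_{\bf L}}$ with the reduction maps, all of which are available from \S4. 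Once this identification is in place, taking $\varprojlim_k$ of the third identity in Theorem \ref{Theorem ES2} and raising to the power $1-N({\bf l}')^n({\bf l}',F_{\bf L})^{-1}$ (which is a valid operator on the $l$-adic groups) completes the proof.
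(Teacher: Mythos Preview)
Your proposal is correct and follows exactly the approach of the paper, which simply states that the theorem ``follows directly from Theorem \ref{Theorem ES2}.'' You have merely supplied the details of the inverse-limit passage that the paper leaves implicit: identifying $\Lambda(\xi_{v({\bf L})})=\blambda_{v({\bf L})}$ with $\varprojlim_k \blambda_{v({\bf L}),k}$, using the compatibility of $\partial$, $Tr$, and $N$ with the reduction maps $r_{k'/k}$, and checking that $\varprojlim_k \blambda'_{v({\bf L}),k}=\Lambda(N_{v({\bf L}')/v({\bf L})}(\xi_{v({\bf L}')}))$ via the explicit formula for $\Lambda_{v,l^k}$.
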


\begin{proof} This follows directly from Theorem \ref{Theorem ES2}.
\end{proof}

\begin{remark} \label{EtaleES}
It is easy to see that one can construct Euler systems
for \'etale $K$-theory in a similar manner.
 \end{remark}

\begin{remark} In our upcoming work, we will use the Euler systems constructed above to investigate the structure of the group
of divisible elements $div K_{2n}(F)_l$ inside $K_{2n} (F)_l.$ The structure of  $div K_{2n}(F)_l$
is of principal interest vis a vis some classical conjectures in algebraic number theory, as explained in
the introduction.
\end{remark}


\medskip

\noindent
{\it Acknowledgments}:\quad
The first author would like to thank the University
of California, San Diego for hospitality and financial
support during his stay in April 2009, when this collaboration began, and during the period
December 2010 -- June 2011.
Also, he thanks the SFB in Muenster for hospitality and financial
support during his stay in September 2009 and the Max Planck Institute in Bonn
for hospitality and financial support during his stay in April and May 2010.
\bigskip


\bibliographystyle{amsplain}

\begin{thebibliography}{W}


\bibitem{Ba1} G. Banaszak,\,
\textit{Algebraic $K$-theory of number fields and rings of integers
and the Stickelberger ideal}, Annals of Math.
       \textbf{135} (1992), 325-360

\bibitem{Ba2} G. Banaszak,\,
\textit{Generalization of the Moore exact sequence and the wild kernel
for higher K-groups}, Compositio Math.
       \textbf{86} (1993), 281-305

\bibitem{BG1} G. Banaszak, W. Gajda,\,
\textit{Euler Systems for Higher K - theory of number fields},
Journal of Number Theory
       \textbf{58 No. 2} (1996), 213-256

\bibitem{BG2} G. Banaszak, W. Gajda,\,
\textit{On the arithmetic of cyclotomic fields and the $K$-theory
of ${\bf Q}$}, Proceedings of the Conference
on Algebraic $K$-theory, Contemporary Math. AMS
       \textbf{199} (1996), 7-18

\bibitem{Burns-Greither} D. Burns and C. Greither,\,\textit{Equivariant Weierstrass preparation and values of $L$-functions at negative integers},
Documenta Mathematica (K. Kato's fiftieth birthday issue) (2003), 157--185.

\bibitem{Br} W. Browder,\,
\textit{Algebraic $K$-theory with coefficients $\Z/p$ }, Lecture Notes in Math.
     \textbf{657} (1978)

\bibitem{Bo} A. Borel,\,
\textit{Stable real cohomology of arithmetic groups},
Ann. Sci. \' Ecole Nor. Sup.
       \textbf{7 (4)} (1974), 235-272


\bibitem{Co1} J. Coates,\,
\textit{On $K_2$ and some classical conjectures m algebraic number theory.} Ann. of
Math. 95, 99-116 (1972)

\bibitem{Co2} J. Coates,\, \textit{$K$-theory and Iwasawa's analogue of the Jacobian. In. Algebraic $K$-theory
II,} p. 502-520. Lecture Notes in Mathematics 342. Berlin-Heidelberg-New York
Springer (1973)

\bibitem{C} J. Coates,\,
\textit{$p$-adic $L$-functions and Iwasawa's theory},
in Algebraic Number fields by A. Fr{\'' o}chlich, Academic Press, London 1977
       \textbf{} (1977), 269-353

\bibitem{CS} J. Coates, W. Sinnott,\,
\textit{An analogue of Stickelberger's theorem for the higher K-groups },
Invent. Math.
       \textbf{24} (1974), 149-161

\bibitem{DR} P. Deligne, K. Ribet\,
\textit{Values of abelian $L$-functions at negative integers over totally real fields},
Invent. Math.
       \textbf{59} (1980), 227-286

\bibitem{DF} W. Dwyer, E. Friedlander,\,
\textit{Algebraic and \' etale $K$-theory}, Trans. Amer. Math. Soc.
       \textbf{292} (1985), 247-280

\bibitem{DFST} W. Dwyer, E. Friedlander, V. Snaith, R. Thomason,\,
\textit{Algebraic $K$-theory eventually surjects onto topological $K$-theory},
Invent. Math.
        \textbf{66} (1982), 481-491

\bibitem{FW} E. Friedlander and C. Weibel,\,
\textit{An overview of algebraic $K$-theory},
Proceedings of the Workshop and Symposium:\,\,
Algebraic K-Theory and Its Applications, H. Bass, A. Kuku, C. Pedrini editors
World Scientific, Singapore, New Jersey
       \textbf{} (1999), 1-119

\bibitem{GP} C. Greither, C. Popescu \textit{An equivariant Main conjecture in Iwasawa Theory and Applications},
submitted for publication;  arXiv:1103.3069v1 (2011).

\bibitem{Gi} H. Gillet,\,
\textit{Riemann-Roch theorems for higher algebraic $K$-theory},
Adv. in Math.
Number Theory, Contemp. Math.
        \textbf{40} (1981), 203-289

\bibitem{J} U. Jannsen,\,
\textit{On the $l$-adic cohomology of varieties over number fields
and its Galois cohomology}, Mathematical Science Research Institute Publications, Springer-Verlag
       \textbf{16} (1989), 315-360

\bibitem{NQD} Th. Nguyen Quang Do,\, \textit{Conjecture Principale \'Equivariante, id\'eaux de
              Fitting et annulateurs en th\'eorie d'Iwasawa}, Journal de Th\'eorie des Nombres de Bordeaux, \textbf{17}, No 2, (2005), 643--668.


\bibitem{Po} C. Popescu,\,
\textit{On the Coates-Sinnott conjecture}, {Math. Nachr.} 282, No. 10 {(2009)}, 1370--1390.
  \textbf{}

\bibitem{Q1} D. Quillen,\,
\textit{Higher Algebraic $K$-theory: I}, Lecture Notes in Mathematics
       \textbf{341} (1973), 85-147, Springer-Verlag

\bibitem{Q2} D. Quillen,\,
\textit{Finite generation of the groups $K_i$ of rings of integers}, Lecture Notes in Mathematics
        \textbf{} (1973), 179-214, Springer-Verlag

\bibitem{Q3} D. Quillen,\,
\textit{On the cohomology and $K$-theory of the general linear groups
over a finite field}, Ann. of Math.
       \textbf{96 (2)} (1972), 552-586

\bibitem{Sch} P. Schneider,\,
\textit{{\'' U}ber gewisse Galoiscohomologiegruppen }, Math. Zeit.
       \textbf{168} (1979), 181-205

\bibitem{So1} C. Soul\' e,\,
\textit{K-th\' eorie des anneaux d'entiers de corps de nombres et cohomologie
\' etale}, Inv. Math.
       \textbf{55} (1979), 251-295


\bibitem{So2} C. Soul\' e,\,
\textit{Groupes de Chow et K-th\' eorie de vari\' et\' es sur un corps fini}, Math. Ann.
       \textbf{268} (1984), 317-345

\bibitem{Ta1} J. Tate,\, \textit{Letter from Tate to lwasawa on a relation between $K_2$  and Galois cohomotogy},
In Algebraic $K$-theory II, p. 524-527. Lecture Notes in Mathematics 342. Berhn-
Heidelberg-New York : Springer (1973)


\bibitem{Ta2} Tate, J.\,
\textit{Relation between $K_2$ and Galois cohomology}
Invent. Math. 36
\textbf{} (1976) 257-274


\bibitem{We} C. Weibel,\,
\textit{Introduction to algebraic $K$-theory}, book in progress at Charles Weibel home page,
http://www.math.rutgers.edu/~weibel/
       \textbf{}

\bibitem{W} G.W. Whitehead,\,
\textit{Elements of Homotopy Theory}, Graduate Texts in Math.
       \textbf{61} (1978) Springer-Verlag

\end{thebibliography}

\end{document}